\documentclass[hidelinks]{inprogress}
\usepackage{listings}
\usepackage{multirow}
\usepackage{pgfplots}
\pgfplotsset{compat=1.17}
\usepackage{booktabs}
\usepackage{ifthen}
\usepackage[normalem]{ulem}
\textwidth=6.6truein

% This is a temporary command until we decide on notation for the radius on ball?
\newcommand{\rad}{r}
\newcommand{\Ball}{{B_1(\RR^d)}}
\newcommand{\optimal}[2][d]{\eta_{#1}\left({#2}\right)}
\newcommand{\optimalExpr}[2][d]{%
  \frac{%
    \varphi_{B_{1}( \RR^#1)}\left(%
      \ifthenelse{\equal{#2}{0}}{0}{#2\sqrt{\mu_{\Ball}/\lambda_{\Ball}}}%
    \right)%
  }{%
    \varphi_{B_{1} (\RR^#1)}\left(\sqrt{\mu_{\Ball}/\lambda_{\Ball}}\right)%
  }%
}
\newcommand{\optimalBessel}[2][d]{\frac{x^{1-\frac{#1}{2}}J_{\frac{#1}{2}-1}\left(#2\sqrt{\mu_{\Ball}}\right)}
{J_{\frac{#1}{2}-1}\left(\sqrt{\mu_{\Ball}}\right)}}

\begin{document}

\newauthor[fullname=Jaume de Dios Pont, 
           institution=ETH Zürich, 
           email=jaume.dediospont@math.ethz.ch]
          {Jaume}

\newauthor[fullname= Alexander Hsu, 
           institution=University of Washington, email=owlx@uw.edu]
          {Alex}

\newauthor[fullname= Mitchell A.~Taylor, 
           institution=ETH Zürich, 
           email=mitchell.taylor@math.ethz.ch]
          {Mitchell}
\title{Sharp bounds on the failure of the hot spots conjecture}

\maketitle
\abstract{
The hot spots ratio of a domain $\Omega\subset \mathbb{R}^d$ measures the degree of failure of Rauch's \emph{hot spots} conjecture on that domain. We identify the largest possible value of this ratio over all connected Lipschitz domains $\Omega\subset \mathbb{R}^d$, for any dimension $d$. As \(d\to \infty\), we show that this maximal ratio converges to \(\sqrt{e}\), which asymptotically matches the previous best known upper bound by Mariano, Panzo and Wang. For $d\ge 2$, we show that sets extremizing the hot spots ratio do not exist, and extremizing sequences must converge to a ball at a quantitative rate.  We then give a sharp bound on the measure of the set for which the first Neumann eigenfunction exceeds its maximal boundary value. From this we deduce that the \emph{hot spots} conjecture is asymptotically true ``in measure'' as \(d\to \infty\). }

\section{Introduction}

Rauch's \emph{hot spots} conjecture, first posed in his 1974 lectures at Tulane University~\cite{rauch1975five}, is a statement about the long-term behavior of solutions to the heat equation in an insulated domain. Given a bounded, connected domain \(\Omega \subset \RR^d\), let \(\psi_\Omega\) denote the first nontrivial eigenfunction of the Laplace operator in \(\Omega\) with Neumann boundary conditions.  In physical terms, \(\psi_\Omega\) represents the fundamental mode of heat dissipation in the insulated body \(\Omega\), and its behavior dominates the asymptotic temperature fluctuations for generic initial temperature distributions. Rauch's \emph{hot spots} conjecture asserted that, for sufficiently regular domains, the points where \(\psi_{\Omega}\) attains its maximal and minimal values must lie on the boundary \(\partial \Omega\). In other words, for generic initial data, the extrema of the temperature fluctuations should migrate towards \(\partial \Omega\) as \(t\to \infty\).

Initially, this conjecture was believed to hold for \emph{all} sufficiently regular domains in \(\RR^d\). Burdzy and Werner~\cite{burdzy1999counterexample} disproved this in two dimensions by constructing a counterexample with three holes (with later examples with one hole provided in~\cite{Burdzy2005TheHS}). Most attention then turned towards convex domains, where the conjecture was thought to hold in arbitrary dimensions. This was recently disproved by the first author~\cite{deDios2024convex} in all sufficiently high dimensions.

Despite these counterexamples, the conjecture has been proven for various classes of domains. In \(\RR^2\), these include all triangles~\cite{judge2020euclidean,judge2022erratum}, domains that are long and thin in different senses~\cite{banuelos1999hot,atar2004neumann,krejvcivrik2019location}, and convex sets with one axis of symmetry~\cite{jerison2000hot,pascu2002scaling}. It is widely believed that the conjecture holds for all simply connected sets in \(\mathbb R^2\).

In higher dimensions, positive results have been established for cylinder sets~\cite{kawohl1985rearrangements}, certain thin sets that are rotationally symmetric in all but one dimension~\cite{chen2019monotone}, and domains generalizing the two-dimensional class introduced by Atar and Burdzy~\cite{atar2004neumann}, such as~\cite{yang2011hot,KennedyRohleder2024}.

The existence of counterexamples in various settings raises the following fundamental question: \emph{How badly does the hot spots conjecture fail?} This question has been around since the first counterexamples were found in the late 90s, and its importance was reiterated in recent years by Steinerberger~\cite{steinerberger2023upper}, who introduced the hot spots ratio, a quantitative measure of the conjecture's failure.

\begin{definition}[\cite{steinerberger2023upper,mariano2023improved}]\label{HSR def}
Let \(S_d\) be the supremum, among all connected, bounded domains \(\Omega\subset\mathbb R^d\) with Lipschitz boundary, of the ratio
\begin{equation*}
	\frac{\max_{x\in \Omega} \psi_\Omega(x)}{\max_{x\in\partial \Omega} \psi_\Omega(x)},
\end{equation*}
where \(\psi_\Omega\) is any of the first non-constant Laplace eigenfunctions of \(\Omega\) with Neumann boundary conditions. 
\end{definition}

In this paper, we determine the exact value of \(S_d\) in all dimensions. Approximate numerical values are presented in Table~\ref{tab:values}. We further show that extremizers to this ratio do not exist for \(d>1\), and that extremizing sequences must converge to a ball at a precise quantitative rate. As \(d\to \infty\), we show that $S_d$ converges to \(\sqrt{e}\), matching the previously best known upper bound from~\cite{mariano2023improved}.
\begin{figure}[h!]
\centering

{\begin{minipage}{.5\linewidth}
    \input{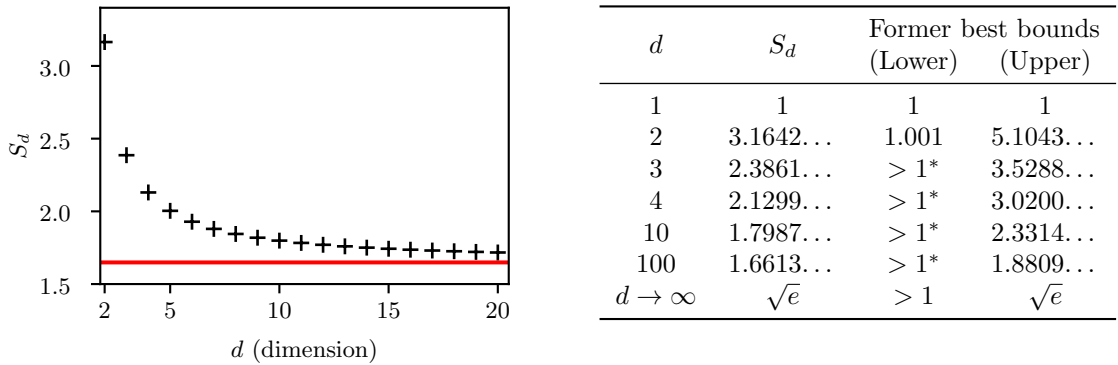}
\end{minipage}%
\begin{minipage}{.5\linewidth}
\vspace{-2em}
\begin{tabular}{ c c c c}
\toprule
\multirow{2}{*}{\(d\)} & \multirow{2}{*}{\(S_d\)} & \multicolumn{2}{c}{Former best bounds} \\
 &  & (Lower) & (Upper) \\
\midrule
1 & 1 & 1 & 1 \\
2 & 3.1642\dots & 1.001 & 5.1043\dots \\
3 & 2.3861\dots & \(>1^*\) & 3.5288\dots\\
4 & 2.1299\dots & \(>1^*\)& 3.0200\dots \\
10 & 1.7987\dots & \(>1^*\) & 2.3314\dots \\
100 & 1.6613\dots & \(>1^*\) & 1.8809\dots \\
\(d\to\infty\) & \(\sqrt{e}\) & \(>1\) & \(\sqrt{e}\) \\
\bottomrule 
\end{tabular}
\end{minipage}}

\caption{\small{(Left) Plot of $S_d$ as a function of $d$ with the the asymptotic value \(S_d\ \to \sqrt{e}\) marked by a red line. (Right) Approximate values of \(S_d\) for different dimensions \(d\) along with the previously best known bounds. The value \(>1\) means that no specific value had been computed (to the knowledge of the authors), but the value is known to be \(>1\). Results marked with \({}^\ast\) were \emph{folklore} results. The fact that \(\liminf_{d\to \infty} S_d>1\), without a specific computed value, follows from the proof of~\cite{deDios2024convex}. The  lower bound in \(d=2\) comes from numerical experiments~\cite{kleefeld2021hot}. All previously best known upper bounds follow from~\cite{mariano2023improved}. }}%
\label{tab:values}
\end{figure}

To state our main results precisely, we recall the following basic notation and terminology.

\begin{definition}[Notation for eigenfunctions] \phantom{.}
\begin{enumerate}
    \item 	We will denote by \(\varphi_{\Omega}^{(k)}\) the \(k\)-th Dirichlet eigenfunction, of eigenvalue \(\lambda_{\Omega}^{(k)}\), with \(k\) starting at \(1\). When \(k\) is omitted, it is assumed to be \(1\).

	\item We will denote by \(\psi_{\Omega}^{(k)}\) the \(k\)-th Neumann eigenfunction, of eigenvalue \(\mu_{\Omega}^{(k)}\), with \(k\) starting at \(0\) (so that \(\psi_{\Omega}^{(0)}\) is the constant function and \(\psi_{\Omega}^{(1)}\) is the first nontrivial eigenfunction).  When \(k\) is omitted, it is assumed to be \(1\).

	\item When a function (such as \(\varphi_{B_1(\mathbb R^d)}\)) is radial, we will, in an abuse of notation, write \(\varphi_{B_1(\mathbb R^d)}(x)=\varphi_{B_1(\mathbb R^d)}(|x|)\).
\end{enumerate}
\end{definition}

We will prove that sequences of domains extremizing the hot spots ratio must (in a certain sense) converge to a ball (see Figure~\ref{fig:sieve}). The reason that the ball is not a maximizer is because its first nontrivial Neumann eigenfunction is not radial. As we will see, the almost extremizing domains for the hot spots constant are slight modifications of the ball that \emph{drop} the first radial eigenvalue right below the first non-radial eigenvalue. The resulting radial eigenfunction then dictates the degree of failure of the \emph{hot spots} conjecture.
\begin{definition}
We will denote by
    \[
    \optimal{x} := \optimalExpr{x} = \optimalBessel{x}.
    \]
    This function, with domain \(\Ball\), is the unique solution to the PDE
    \[
    \begin{cases}
        -\Delta\optimal x =\mu_{\Ball} \optimal x & \text{if }|x|<1,\\
        \eta_d(x) =1 & \text{if }|x| =1.
    \end{cases}
    \]
\end{definition}
The function \(\optimal x\) is not a Neumann eigenfunction of the ball (in that it does not satisfy the Neumann boundary conditions), but it is a Laplace eigenfunction on the interior. It will arise as the locally uniform limit of a sequence of Neumann eigenfunctions of a sequence of domains converging to a ball. 

Our first main result states that the failure modes of the \emph{hot spots} conjecture are tightly controlled by the function \(\optimal{x}\). 
    \begin{theorem}%
\label{thm:goal}

    Let \(\optimal{x}\) be as above. Then for any \(d\ge 2\),
	\begin{equation*}
		S_d = \|\optimal{x}\|_{L^\infty(B_1(\RR^d))} = \optimal{0}.
	\end{equation*}
	Moreover, the value of \(S_d\) is not achieved by any set in \(\RR^d\) with Lipschitz boundary. On the other hand, any extremizing sequence of domains must converge to a ball at a quantitative rate: If \(\Omega\) has the same volume as the unit ball, then
    \[
        S_d-\frac{\max_{x\in \Omega} \psi_{\Omega}(x)}{\max_{x\in \partial \Omega} \psi_{\Omega}(x)}<\epsilon^2 \quad\implies \quad \mathcal A(\Omega) \le C_d \epsilon,
    \]
    where \(\mathcal A(\Omega)\) is the Fraenkel asymmetry of \(\Omega\).
\end{theorem}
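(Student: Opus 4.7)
The plan is to prove the theorem in three stages: establish the sharp upper bound $S_d \le \optimal{0}$, match it with a near-extremizer construction, and then extract non-existence and the quantitative Fraenkel bound from the stability of the upper-bound argument.

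For the upper bound, given any Lipschitz $\Omega$ with first nontrivial Neumann eigenfunction $\psi$ of eigenvalue $\mu$, I would rescale so that $\mu = \mu_\Ball$ and normalize $\psi$ so $\max_{\partial\Omega}\psi = 1$. Let $U := \{\psi > 1\}$; by continuity and the normalization, $\psi = 1$ on all of $\partial U$, so $\psi|_U$ solves the Dirichlet-type problem $-\Delta\psi = \mu\psi$, $\psi|_{\partial U} = 1$. Pairing $-\Delta(\psi - 1) = \mu\psi$ with the first Dirichlet eigenfunction $\varphi_U$ of $U$ and integrating by parts gives
\[
(\lambda_U - \mu)\int_U (\psi-1)\,\varphi_U = \mu\int_U \varphi_U > 0,
\]
forcing $\lambda_U > \mu$. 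The key remaining step is the sharp measure bound $|U| \le |\Ball|$, which I would establish by applying a Talenti-type symmetrization to $\psi - 1$ (which solves $-\Delta(\psi-1) = \mu(\psi-1) + \mu$ with zero Dirichlet data) combined with a monotone Picard iteration that absorbs the feedback term $\mu(\psi-1)$. Once $|U| \le |\Ball|$ is in hand, a Talenti comparison dominates $\max_U \psi$ by the value at the origin of the radial solution of $-\Delta v = \mu_\Ball v$, $v|_{\partial \Ball} = 1$; this radial solution is by definition $\optimal{\cdot}$, yielding $\max_\Omega \psi \le \optimal{0}$.

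For the reverse inequality, I would construct a sieved ball $\Omega_n$ obtained from $\Ball$ by attaching narrow outward spikes (or drilling symmetric shafts) engineered so that the first non-radial Neumann eigenvalue is pushed strictly above the first radial one. After this spectral flip, the first nontrivial Neumann eigenfunction of $\Omega_n$ is radial, and as the perturbation shrinks it converges locally uniformly to $\optimal{\cdot}$ by domain monotonicity and elliptic regularity, driving the hot spots ratio to $\optimal{0}$.

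For rigidity and the quantitative statement, any extremizing sequence $\{\Omega_n\}$ with $|\Omega_n| = |\Ball|$ must saturate every inequality in the upper bound: $\lambda_{U_n} \to \mu_\Ball$, $|U_n| \to |\Ball|$, and equality in the Talenti comparison forces $U_n$ to be close to a ball. A Lipschitz extremizer would then force $\Omega$ itself to be a ball via a quantitative Szegő--Weinberger inequality (since saturation of $\mu \le \mu_\Ball(|\Ball|/|\Omega|)^{2/d}$ at a ball is rigid), but balls have non-radial first Neumann eigenfunction with ratio $1 < \optimal{0}$, ruling out existence. Tracking defects through this chain and using the Bonnesen-type $\epsilon^2 \mapsto \epsilon$ gain of quantitative Szegő--Weinberger converts the $\epsilon^2$ ratio gap into the Fraenkel bound $\mathcal{A}(\Omega) \le C_d\epsilon$. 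The main obstacle is the sharp measure bound $|U| \le |\Ball|$: the nonlinear feedback $\mu\psi$ complicates direct symmetrization, and one must extract the sharp constant matching the explicit Bessel formula defining $\optimal{\cdot}$. A secondary difficulty is tracking stability uniformly through each step to obtain the quantitative statement.
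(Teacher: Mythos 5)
Your upper-bound and stability arguments are essentially the paper's: a comparison principle, Talenti's inequality run through a monotone Picard iteration, and the Szeg\H{o}--Weinberger inequality together with its quantitative stability to convert an $\epsilon^2$ ratio deficit into an $\epsilon$ Fraenkel bound. One confusion is worth flagging in the step you single out as ``key'': the measure bound $|U|\le|\Ball|$ is not something a Talenti-type symmetrization can deliver --- symmetrization compares functions on domains of \emph{equal} measure and says nothing about $|U|$. In your normalization it is exactly Szeg\H{o}--Weinberger: once you rescale so that $\mu_\Omega=\mu_{\Ball}$, that inequality forces $|\Omega|\le|\Ball|$, hence $|U|\le|\Omega|\le|\Ball|$, and only then does Talenti on $U$ versus $U^\sharp\subset\Ball$, combined with monotonicity of the Dirichlet problem under domain inclusion, reach $\eta_d(0)$. (The paper avoids the superlevel set entirely by comparing $\psi/\max_{\partial\Omega}\psi$ with the solution of $-\Delta u=\mu_\Omega u$, $u|_{\partial\Omega}=1$, on all of $\Omega$, then symmetrizing and using monotonicity in $\mu$; your superlevel-set variant is workable but additionally needs the regularity of $\partial U$ handled, e.g.\ via Sard.)

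The genuine gap is the lower bound. ``Attaching narrow outward spikes'' or ``drilling symmetric shafts'' so that ``the first non-radial eigenvalue is pushed above the first radial one'' does not by itself produce ratio $\to\eta_d(0)$, and ``domain monotonicity'' is not available here: Neumann eigenvalues are not monotone under domain inclusion. The constraint is quantitative, not merely a question of which mode comes first. The paper's construction separates an inner unit ball from a thin outer annulus of width $\delta$ by a spherically equidistributed sieve of channels whose total cross-section is tuned so that the homogenized limit carries a jump condition of strength $\beta$ across $\SS^{d-1}$; the radial first eigenvalue of the effective problem is then $\approx\beta$ and the interior profile is $c\,\varphi_{\Ball}\bigl(x\sqrt{\beta/\lambda_{\Ball}}\bigr)$. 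Reaching the sharp constant requires sending $\beta\to\mu_{\Ball}^-$ so the interior eigenvalue approaches the Szeg\H{o}--Weinberger threshold from below, \emph{while} keeping the first eigenfunction radial and controlling the trace of the eigenfunction on the actual Lipschitz boundary (the necks and the outer sphere), which is what the homogenization estimates of Section~5 provide. If the connectivity is too weak the radial eigenvalue tends to $0$ and the ratio tends to $1$; if it is too strong the first eigenfunction is non-radial and the ratio is again $1$. Your sketch contains neither the two-region structure nor the tuning of this connectivity parameter, and gives no mechanism forcing the limiting radial profile to be $\eta_d$ rather than some other radial function with a much smaller value at the origin.
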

\begin{remark}
Let \(\Omega_{res}\) be a rescaling of \(\Omega\) such that \(|\Omega_{res}|= |\Ball|\). Then the \emph{Fraenkel asymmetry}, \(\mathcal A(\Omega)\), measures how different \(\Omega_{res}\) is from the ball:
\[
\mathcal A(\Omega) := \min_{x\in \mathbb R^d} |(x+\Ball) \Delta\Omega_{res}|.
\]
\end{remark}

Since \(\optimal{x}\) can be written explicitly in terms of Bessel functions, we can compute the limit \(\lim_{d\to \infty} S_d\).
\begin{corollary}%
\label{cor:sqrte}%
    One has \(\lim_{d\to \infty} S_d = \sqrt{e}\).
\end{corollary}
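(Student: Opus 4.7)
By Theorem~\ref{thm:goal}, \(S_d = \optimal{0}\). Substituting the hypergeometric form \(J_\nu(y) = (y/2)^\nu\,{}_0F_1(\nu+1;-y^2/4)/\Gamma(\nu+1)\) with \(\nu = d/2-1\) into the Bessel expression for \(\optimal{0}\), the prefactors cancel and we obtain the clean identity
\[
S_d \;=\; \frac{1}{{}_0F_1\!\left(d/2;\,-\mu_{\Ball}/4\right)}.
\]
The plan is then to evaluate this limit using leading-order asymptotics for \(\mu_{\Ball}\) together with an elementary limit for \({}_0F_1\).

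The first ingredient is the asymptotic \(\mu_{\Ball}/d \to 1\). The upper bound \(\mu_{\Ball} \le d+2\) is immediate from the Rayleigh quotient with the mean-zero trial function \(x_1\), for which \(\|\nabla x_1\|_{L^2(\Ball)}^2/\|x_1\|_{L^2(\Ball)}^2 = d+2\). For the matching lower bound, the first non-trivial Neumann eigenfunction of the ball is a dipole mode \(\psi(x) = x_1\,u(|x|)\) with \(u(r) = r^{-d/2}J_{d/2}(\sqrt{\mu_{\Ball}}\,r)\); the boundary condition \(u(1)+u'(1)=0\), combined with standard Bessel recurrences and the \({}_0F_1\) form of \(J_\nu\), reduces to
\[
\frac{\mu_{\Ball}}{d+2} \;=\; \frac{{}_0F_1(d/2+1;\,-\mu_{\Ball}/4)}{{}_0F_1(d/2+2;\,-\mu_{\Ball}/4)}.
\]

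The second ingredient is the limit: for every fixed \(s>0\),
\[
{}_0F_1(\nu;\,-s\nu) \;=\; \sum_{k=0}^\infty\frac{(-s)^k}{k!}\cdot\frac{\nu^k}{(\nu)_k} \;\xrightarrow[\nu\to\infty]{}\; e^{-s},
\]
which follows from the pointwise convergence \(\nu^k/(\nu)_k \to 1\) combined with the uniform bound \(\nu^k \le (\nu)_k\), allowing dominated convergence against the summable majorant \(\sum s^k/k!\). Applied to both the numerator and denominator of the ratio identity above (which share the same subsequential limit \(e^{-s^{*}}\) for any limit point \(s^{*}\in[0,1/2]\) of \(\mu_{\Ball}/(4d)\)), the right-hand side tends to \(1\), forcing \(\mu_{\Ball}/(d+2) \to 1\) and hence \(\mu_{\Ball}/(2d) \to 1/2\). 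Plugging back into \(S_d = 1/{}_0F_1(d/2;\,-\mu_{\Ball}/4)\) yields \(S_d \to e^{1/2} = \sqrt{e}\). The hypergeometric limit is essentially routine; the main technical point is the sharp two-sided control on \(\mu_{\Ball}\), which comes from matching the variational upper bound with the \({}_0F_1\)-form of the Neumann boundary condition.
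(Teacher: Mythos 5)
Your proof is correct, but it takes a genuinely different route from the paper. The paper proves the stronger statement that \(\eta_d(r)\to e^{(1-r^2)/2}\) uniformly on \([0,1]\) (Lemma~\ref{lem:gaussian_asymp}), by identifying \(\eta_d/\|\eta_d\|_\infty\) with the Fourier transform of the uniform measure on a sphere of radius \(\sqrt{\mu_{\Ball}}\) and invoking the classical Gaussian limit of one-dimensional marginals of spheres of radius \(\sim\sqrt{d}\); the corollary is then read off at \(r=0\), and the eigenvalue asymptotic \(\mu_{\Ball}=d+O(1)\) is simply cited as known. You instead evaluate \(\eta_d(0)=1/{}_0F_1(d/2;-\mu_{\Ball}/4)\) directly by a Tannery/dominated-convergence argument on the hypergeometric series, and — this is a genuine addition over the paper — you derive \(\mu_{\Ball}/d\to 1\) self-containedly by combining the variational upper bound \(\mu_{\Ball}\le d+2\) (trial function \(x_1\)) with the Neumann condition \(J_{d/2}(\sqrt{\mu_{\Ball}})=\sqrt{\mu_{\Ball}}\,J_{d/2+1}(\sqrt{\mu_{\Ball}})\) rewritten in \({}_0F_1\) form; both ratio identities I checked are right. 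The trade-off: your argument is elementary and fully self-contained for the eigenvalue asymptotics, but it only evaluates the limit at the single point \(r=0\), whereas the paper's probabilistic argument yields the full limiting profile \(\eta_\infty\), which is also what drives Corollary~\ref{cor:vol_exp}. Two cosmetic points to fix: the limit \({}_0F_1(\nu;-s\nu)\to e^{-s}\) is applied with \(s=s_d\) depending on \(d\) rather than fixed, so state the Tannery argument for a bounded sequence \(s_d\to s^*\) with majorant \(\sum_k (\sup_d s_d)^k/k!\) (the bound \(\mu_{\Ball}\le d+2\) is exactly what makes \(s_d\) bounded, so this step is essential, not decorative); and the limit points you track are those of \(\mu_{\Ball}/(2d)\), not \(\mu_{\Ball}/(4d)\), since \(-\mu_{\Ball}/4=-(d/2)\cdot\mu_{\Ball}/(2d)\).
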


\Cref{thm:goal} determines the \emph{gap} in the \emph{hot spots} conjecture in terms of the \(L^\infty\) norm. One could, alternatively, try to determine the size of the gap \emph{in measure}: How large is the set where \(\psi_\Omega\) is too large?
\begin{definition}%
    \label{def:hotspots_volume}
    For \(\alpha \in [1, S_d]\) let \(V_d(\alpha)\) be the maximum, among all bounded and connected Lipschitz domains \(\Omega\) in \(\mathbb R^d\), of the quantity \[\frac{|\{x\in \Omega \ \text{ s.t. } \ \psi^{(1)}_\Omega(x)\ge \alpha\max_{y \in \partial \Omega}\psi^{(1)}_{\Omega}(y) \}|}{|\Omega|}.\]
\end{definition}

\begin{theorem}%
    \label{thm:even_more}
   When \(\alpha \in [1, S_d]\), the function \(V_d(\alpha)\) is given  implicitly by
    \begin{equation*}
    \label{eq:volume_estimate_remark}
    V_d\left(\optimal{ { \alpha^{1/d} } }\right) = \alpha.
    \end{equation*}
\end{theorem}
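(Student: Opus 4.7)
The plan is to reduce Theorem~\ref{thm:even_more} to the pointwise rearrangement bound
\[
\bigl(\max(\psi_\Omega, M)\bigr)^\star \;\leq\; \eta_d \quad \text{on } \Ball,
\]
after the normalizations $|\Omega|=|\Ball|$ and $M := \max_{\partial \Omega}\psi_\Omega = 1$, where ${}^\star$ denotes Schwarz symmetrization onto $\Ball$. This pointwise bound immediately yields, for every $\beta \geq 1$,
\[
\frac{|\{\psi_\Omega > \beta M\}|}{|\Omega|} \;=\; \frac{|\{(\max(\psi_\Omega, M))^\star > \beta M\}|}{|\Ball|} \;\leq\; \frac{|\{\eta_d > \beta\}|}{|\Ball|} \;=\; \bigl(\eta_d^{-1}(\beta)\bigr)^d,
\]
and setting $\beta = \eta_d(\alpha^{1/d})$ gives the upper bound $V_d(\eta_d(\alpha^{1/d})) \leq \alpha$.

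For the pointwise bound, let $W = \{\psi_\Omega > 1\}$ with $|W| = |B_{R_1}(\RR^d)|$, so $R_1 \leq 1$. A Hopf-type integration of Green's identity against the first Dirichlet eigenfunction of $W$ gives $\mu_\Omega < \lambda_W$; combined with Szegő--Weinberger ($\mu_\Omega \leq \mu_{\Ball}$) and $R_1 \leq 1$, this yields $\mu_\Omega R_1^2 \leq \mu_{\Ball} < \lambda_{\Ball}$, so the operator $-\Delta - \mu_\Omega$ obeys the maximum principle on $B_{R_1}(\RR^d)$. Applying classical Talenti rearrangement to $\psi_\Omega - 1$ (which vanishes on $\partial W$ and satisfies $-\Delta(\psi_\Omega - 1) = \mu_\Omega \psi_\Omega$), and then comparing via this maximum principle, one obtains $\bigl(\max(\psi_\Omega, 1)\bigr)^\star\big|_{B_{R_1}} \leq \widetilde v$, where $\widetilde v$ is the radial solution of $-\Delta \widetilde v = \mu_\Omega \widetilde v$ on $B_{R_1}(\RR^d)$ with $\widetilde v = 1$ on the boundary. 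Outside $B_{R_1}$ the rearrangement equals $1$ and $\eta_d \geq 1$, so the problem reduces to showing $\widetilde v \leq \eta_d$ on $[0, R_1]$.

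Both $\widetilde v$ and $\eta_d$ have the shape $F(r\sqrt{\mu_\bullet})/F(R_\bullet\sqrt{\mu_\bullet})$ with $F(z) := z^{1-d/2} J_{d/2-1}(z)$. Writing $\alpha := \sqrt{\mu_\Omega/\mu_{\Ball}} \in (0, 1]$ and $z := \sqrt{\mu_{\Ball}}$, the desired inequality becomes $F(\alpha r z)/F(\alpha R_1 z) \leq F(rz)/F(z)$ for $r \in [0, R_1]$, which follows from the monotonicity of $x \mapsto F(\alpha x)/F(x)$ on $[0, j_{d/2-1,1}]$ for fixed $\alpha \in (0, 1]$ (a classical Bessel identity, equivalent to convexity of $J_{d/2}/J_{d/2-1}$ on that interval), together with $R_1 \leq 1$.

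For the matching lower bound, apply the extremizing sieve sequences $\Omega_n \to \Ball$ from Theorem~\ref{thm:goal}: their normalized first Neumann eigenfunctions converge locally uniformly to $\eta_d$ on $\Ball$, and since $\eta_d$ is strictly radially decreasing, $|\{\psi_{\Omega_n} > \beta M_n\}|/|\Omega_n| \to (\eta_d^{-1}(\beta))^d$ for every $\beta \in (1, S_d)$. The hardest step will be the Bessel comparison: one must combine the two inputs $\mu_\Omega \leq \mu_{\Ball}$ and $R_1 \leq 1$ in the correct direction to bound $\widetilde v$ by $\eta_d$ rather than the other way around, and this is precisely what the monotonicity of $F(\alpha x)/F(x)$ affords.
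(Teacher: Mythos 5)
Your proposal is correct in substance and, after unwinding, rests on the same three pillars as the paper's proof (normalization of the boundary maximum to $1$, a Talenti-type rearrangement, and Szeg\H{o}--Weinberger), but it organizes the upper bound differently. The paper proves $\psi_\Omega/\max_{\partial\Omega}\psi_\Omega\le u_{\mu_\Omega,\Omega}$ on all of $\Omega$ via the comparison principle, rearranges from $\Omega$ to $\Ball$ at the \emph{same} parameter $\mu_\Omega$, and then replaces $\mu_\Omega$ by $\mu_\Ball$ using $\partial_\mu u_{\mu,\Ball}>0$, proved by a PDE barrier argument (\Cref{Prop analytic}). You instead restrict to the superlevel set $W=\{\psi_\Omega>1\}$, where $\psi_\Omega$ coincides with $u_{\mu_\Omega,W}$ outright (your Green's-identity argument gives $\mu_\Omega<\lambda_W$, so the boundary value problem is uniquely solvable and no comparison principle is needed), rearrange from $W$ to $B_{R_1}(\RR^d)$, and absorb the two inputs $\mu_\Omega\le\mu_{\Ball}$ and $R_1\le1$ into one explicit Bessel computation. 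That computation is sound: the inequality $F(arz)/F(aR_1z)\le F(rz)/F(z)$ does follow from the monotonicity of $x\mapsto xJ_{d/2}(x)/J_{d/2-1}(x)=-xF'(x)/F(x)$ on $(0,j_{d/2-1,1})$ (Mittag--Leffler expansion) together with the monotonicity of $F$, and your endpoint checks confirm the direction. The lower bound via the sieve sequences and locally uniform convergence to $\eta_d$ is the same as the paper's; note only that for $\beta>1$ the relevant superlevel sets are eventually compactly contained in $\Ball$, which is what lets locally uniform convergence control their volumes.

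The one step you cannot wave through is ``applying classical Talenti rearrangement to $\psi_\Omega-1$.'' Talenti's theorem compares $-\Delta u=f$ with $-\Delta v=f^{\sharp}$ for a \emph{given} source; here the source $\mu_\Omega\psi_\Omega$ depends on the unknown, so a single application does not yield $u_{\mu_\Omega,W}\le_{\sharp}u_{\mu_\Omega,B_{R_1}}$. The paper closes exactly this loop in \Cref{prop:gpt_talenti}: iterate the solution operator $T_{\mu,W}$ starting from the constant function, prove $T^{n}_{\mu,W}1\le_{\sharp}T^{n}_{\mu,B_{R_1}}1$ by induction (Talenti for the rearranged source, then the maximum principle on the ball), and pass to the fixed point, which converges precisely because $\mu_\Omega<\lambda_{B_{R_1}}$ --- the hypothesis you verified via $\mu_\Omega R_1^2\le\mu_{\Ball}<\lambda_{\Ball}$. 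With that iteration (applied with $W$ in place of $\Omega$ and $B_{R_1}(\RR^d)$ in place of $\Ball$) substituted for the phrase ``classical Talenti,'' your argument goes through.
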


\Cref{thm:even_more} implies \Cref{thm:goal} (without the stability result). The proof of both theorems is the same and it yields an analogous stability result in \Cref{thm:even_more} for any \(\alpha \in (1, S_d)\).

\Cref{cor:sqrte} shows that the \emph{hot spots} conjecture is, in an \(L^\infty\) sense, \emph{uniformly false} as \(d\to \infty\). If instead one uses a measure-theoretic sense (or, as a corollary, an \(L^p\) sense, \(0<p<\infty\)), the \emph{hot spots} conjecture becomes more and more true as the dimension increases:

\begin{corollary}\label{cor:vol_exp}
    For any \(\alpha>1\), one has \(\lim_{d\to\infty} V_d(\alpha) = 0\). Moreover, the convergence is exponentially fast.
\end{corollary}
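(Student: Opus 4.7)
The plan is to combine \Cref{thm:even_more} with an explicit asymptotic profile for \(\eta_d\) as \(d\to\infty\). Inverting the implicit relation in \Cref{thm:even_more} yields
\[
    V_d(\beta) \;=\; \bigl(\eta_d^{-1}(\beta)\bigr)^d
    \qquad \text{for } \beta\in[1,S_d],
\]
and trivially \(V_d(\beta)=0\) for \(\beta>S_d\). Since \(S_d\to\sqrt e\) by \Cref{cor:sqrte}, the case \(\alpha>\sqrt e\) is immediate: eventually \(V_d(\alpha)=0\). For \(\alpha\in(1,\sqrt e\,]\) it therefore suffices to show that \(\eta_d^{-1}(\alpha)\) stays bounded strictly below \(1\) as \(d\to\infty\).

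The main step is identifying the pointwise limit of \(\eta_d\). Using the expansion \(J_{d/2-1}(z)=\frac{(z/2)^{d/2-1}}{\Gamma(d/2)}\,{}_0F_1\!\bigl(\tfrac{d}{2};-\tfrac{z^2}{4}\bigr)\) and cancelling common prefactors,
\[
    \eta_d(r)\;=\;\frac{{}_0F_1\!\bigl(\tfrac{d}{2};-r^2\mu_{\Ball}/4\bigr)}{{}_0F_1\!\bigl(\tfrac{d}{2};-\mu_{\Ball}/4\bigr)}.
\]
Because \((d/2)_k\ge(d/2)^k\), each series is dominated termwise by \(\sum_k \frac{(\mu_{\Ball}/(2d))^k}{k!}\); provided \(\mu_{\Ball}/d\) stays bounded, a termwise limit plus dominated convergence gives
\[
    {}_0F_1\!\bigl(\tfrac{d}{2};-r^2\mu_{\Ball}/4\bigr)\;=\;e^{-r^2\mu_{\Ball}/(2d)}(1+o(1))
\]
uniformly in \(r\in[0,1]\). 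Setting \(r=1\) and using \(\eta_d(0)=1/{}_0F_1(d/2;-\mu_{\Ball}/4)\), \Cref{cor:sqrte} forces \(\mu_{\Ball}/d\to 1\); plugging back in yields
\[
    \eta_d(r)\;\longrightarrow\;e^{(1-r^2)/2}\qquad\text{uniformly on }[0,1].
\]
Uniformity (and not merely pointwise convergence) uses continuity of the limit together with monotonicity of each \(\eta_d\), which follows from the identity \((r^{1-d/2}J_{d/2-1}(rz))'=-z\,r^{1-d/2}J_{d/2}(rz)\) and the fact that \(\sqrt{\mu_{\Ball}}\) lies strictly below the first positive zero of \(J_{d/2}\).

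With the limit in hand, the conclusion is immediate. For \(\alpha\in(1,\sqrt e)\) the equation \(e^{(1-r^2)/2}=\alpha\) has unique root \(r^*(\alpha)=\sqrt{1-2\log\alpha}<1\), and uniform convergence gives \(\eta_d^{-1}(\alpha)\to r^*(\alpha)\). Hence for all sufficiently large \(d\),
\[
    V_d(\alpha)\;=\;\bigl(\eta_d^{-1}(\alpha)\bigr)^d\;\le\;\Bigl(\tfrac{1+r^*(\alpha)}{2}\Bigr)^d,
\]
which decays exponentially; the endpoint \(\alpha=\sqrt e\) is even better since \(\eta_d^{-1}(\sqrt e)\to 0\). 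The principal technical obstacle is the uniform control of the \({}_0F_1\) asymptotics and, equivalently, the eigenvalue estimate \(\mu_{\Ball}/d\to 1\); both reduce to careful majorization of the hypergeometric series once \Cref{cor:sqrte} is available.
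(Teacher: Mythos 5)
Your proof is correct, and at the top level it follows the same route as the paper: invert the implicit relation of \Cref{thm:even_more} to get \(V_d(\beta)=(\eta_d^{-1}(\beta))^d\), establish the uniform limit \(\eta_d(r)\to e^{(1-r^2)/2}\) on \([0,1]\) (this is exactly the paper's \Cref{lem:gaussian_asymp}), and read off exponential decay from \(\eta_d^{-1}(\alpha)\to\sqrt{1-2\log\alpha}<1\). Where you genuinely diverge is in the proof of the limit itself: the paper identifies \(\eta_d/\|\eta_d\|_\infty\) with the Fourier transform of the uniform measure on the sphere of radius \(\sqrt{\mu_{\Ball}}\) and invokes the Gaussian limit of spherical marginals, whereas you expand \(J_{d/2-1}\) as a \({}_0F_1\) series and take termwise limits with domination. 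The paper explicitly remarks that the probabilistic step "can also be computed explicitly," and your argument is a clean instance of that; it is more self-contained (no high-dimensional probability input) at the cost of some series bookkeeping. Your additional Dini/P\'olya-type uniformity argument via monotonicity of \(\eta_d\) is correct (the Bessel identity and the bound \(\sqrt{\mu_{\Ball}}<j_{d/2,1}\) both hold, the latter since \(\mu_{\Ball}<\lambda_{\Ball}=j_{d/2-1,1}^2\) and Bessel zeros increase with order), though it is redundant once the dominated-convergence step is already uniform in \(r\).

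Two small points to tighten. First, your series asymptotics are conditional on \(\mu_{\Ball}/d\) staying bounded, which you assume but do not prove; this is standard (the trial function \(x_1\) in the Rayleigh quotient gives \(\mu_{\Ball}\le d+2\), and the paper uses \(\mu_{\Ball}=d+O(1)\) directly), but it should be stated since the whole domination argument rests on it. Second, deducing \(\mu_{\Ball}/d\to 1\) from \Cref{cor:sqrte} is logically admissible here (that corollary is established independently of \Cref{cor:vol_exp}), but it runs the paper's logic backwards: \Cref{cor:sqrte} is itself a consequence of the same asymptotic lemma you are reproving. It would be cleaner, and would make your argument a genuine standalone replacement for the paper's Section 6, to obtain \(\mu_{\Ball}/d\to 1\) directly from the eigenvalue asymptotics of the ball rather than from \(S_d\to\sqrt{e}\).
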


The construction in \cite{deDios2024convex} of a convex set that violates the \emph{hot spots} conjecture is, in all but two dimensions, radially symmetric. Moreover, if in \cite[Definition 2.7]{deDios2024convex} one could choose $\psi_{\Omega,V}$ with enough freedom, one would likely be able to reach a $\sqrt{e}$ bound in \cite[Proposition 2.8]{deDios2024convex}. This suggests the following conjecture.

\begin{conjecture}
    Let \(C_d\) be the \emph{hot spots ratio} of $d$-dimensional convex sets, that is, the supremum among all convex, bounded domains \(\Omega\subset\mathbb R^d\) of the ratio
\begin{equation*}
	\frac{\max_{x\in \Omega} \psi_\Omega(x)}{\max_{x\in\partial \Omega} \psi_\Omega(x)}.
\end{equation*} Then
\begin{equation*}
    \lim_{d\to \infty} C_d = \sqrt{e}.
\end{equation*}
\end{conjecture}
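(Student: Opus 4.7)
The upper bound $\limsup_{d\to\infty} C_d \le \sqrt{e}$ is immediate: every bounded convex domain has Lipschitz boundary, so $C_d \le S_d$ for every $d$, and the right side converges to $\sqrt{e}$ by \Cref{cor:sqrte}. The entire content of the conjecture is therefore the matching lower bound.

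For the lower bound, the plan is to exhibit, for each sufficiently large $d$, a radially symmetric convex domain $\Omega_d \subset \RR^d$ whose hot spots ratio is at least $\sqrt{e} - o_d(1)$. Following the construction of~\cite{deDios2024convex}, I would consider a one-parameter family of radially symmetric convex bodies interpolating between the unit ball and a more elongated axisymmetric shape. Along this family, the first nontrivial Neumann eigenvalue transitions from being attained by a non-radial eigenfunction (as in the ball) to being attained by a radial one. At the critical crossover parameter $a_d^\star$, the radial eigenfunction is a valid first nontrivial Neumann eigenfunction, and the goal is to arrange, by tuning the family, that this radial eigenfunction — suitably normalized so that $\max_{\partial \Omega_{a_d^\star,d}} \psi = 1$ — converges locally uniformly inside the unit ball to the extremal profile $\optimal{x}$. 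Taking the ratio $\max \psi / \max_\partial \psi$ would then yield a lower bound for $C_d$ converging to $\optimal{0}$, which tends to $\sqrt{e}$ by \Cref{cor:sqrte}.

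The main obstacle is the tension between convexity and the flexibility needed to realize $\optimal{x}$ as a limiting radial profile. In the proof of \Cref{thm:goal}, the near-extremizers are non-convex perturbations of the ball (thin tubes or radial sieves) that decouple the radial and first non-radial Neumann eigenvalues while barely disturbing the radial profile on the bulk. Convex perturbations are far more rigid: any deformation large enough to push the first non-radial eigenvalue above the first radial one tends to simultaneously distort the radial profile away from $\optimal{x}$. The remark preceding the conjecture suggests that the key is to free up the choice of $\psi_{\Omega, V}$ in~\cite[Definition 2.7]{deDios2024convex}, presumably by allowing a richer family of axisymmetric convex bodies — for instance, thin convex lenses or spindles attached to a ball — rather than the specific parametrization used there. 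The heart of the argument is then a dimension-uniform quantitative Sturm--Liouville analysis in the radial variable, showing that such thin convex perturbations (which by concentration of measure occupy a vanishing solid angle in high dimensions) affect the non-radial spectrum enough to force the crossover while perturbing the radial profile on the bulk by $o_d(1)$. Combining this with the variational characterization of the first non-radial Neumann eigenvalue should close the argument.
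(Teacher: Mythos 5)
The statement you are addressing is a \emph{conjecture} in the paper: the authors do not prove it, and your proposal does not prove it either. The upper-bound half of your argument is correct and essentially trivial --- every bounded convex domain is Lipschitz, so $C_d \le S_d$ for all $d$, and $S_d \to \sqrt{e}$ by \Cref{cor:sqrte}. But that direction was never in doubt; the entire difficulty is the lower bound, and there your text is a research programme, not a proof.

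Concretely, the gap is the step you yourself flag as ``the heart of the argument'': you assert that one can find a family of axisymmetric \emph{convex} bodies for which (i) the first non-radial Neumann eigenvalue can be pushed above the first radial one, and (ii) at the crossover the radial eigenfunction, normalized to have boundary maximum $1$, is within $o_d(1)$ of the extremal profile $\optimal{x}$. Neither (i) nor (ii) is established. The known convex counterexample of \cite{deDios2024convex} yields only a hot spots ratio bounded away from $1$, not $\sqrt{e}-o_d(1)$; the paper's remark that one ``would likely be able to reach a $\sqrt{e}$ bound'' if one ``could choose $\psi_{\Omega,V}$ with enough freedom'' is precisely the heuristic that motivates the conjecture, and your proposal restates that heuristic rather than supplying the missing quantitative control. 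Note also that the paper's own near-extremizers (the Neumann sieves of \Cref{sec:neumann_sieve}) are badly non-convex --- they weakly disconnect the ball from a thin outer shell through exponentially many narrow channels --- and no convexification of that construction is offered or known. Until the ``dimension-uniform quantitative Sturm--Liouville analysis'' is actually carried out for a specific convex family, the lower bound remains open, which is exactly why the statement is a conjecture and not a theorem.
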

\begin{remark}
    Our construction shares some overarching similarities with the counterexample to Payne's nodal line conjecture in \cite{hoffmann1997nodal} ($d=2$) or \cite{MR1836248} ($d\ge 3$), as it involves removing multiple holes from a ball, resulting in a topologically complex domain. In $d\ge 3$, a harmonic capacity argument allows one to connect the holes to the boundary of the ball, giving rise to counterexamples that are homeomorphic to a ball \cite{kennedy2013closed}. An analogous phenomenon holds for the hot spots ratio in $d\ge 3$ (but not for $d=2$, where the hot spots conjecture is still widely expected to hold for simply connected domains).
\end{remark}

\paragraph{Organization of the paper:} \Cref{sec:upper} shows the upper bound in Theorems~\ref{thm:goal} and~\ref{thm:even_more}. \Cref{sec:heuristics}  explains the heuristics for the lower bound and motivates an effective, limiting problem, which we carefully analyze in \Cref{sec:effective_problem}. \Cref{sec:neumann_sieve} constructs the sequence of sets that witness the lower bound and reduces the analysis to the effective problem from \Cref{sec:effective_problem}. The sets saturating the hot spots ratio are constructed using a limiting procedure as a spherically symmetric Neumann sieve. In \Cref{sec:asymptotics} we compute the asymptotics of $S_d$ and $V_d(\alpha)$ as $d\to \infty$, completing the proofs of Corollaries~\ref{cor:sqrte} and \ref{cor:vol_exp}.

\paragraph{LLM usage disclosure:} The sharp result arises from two independent proofs, one providing a lower bound through a sequence of examples, one providing an upper bound through a rearrangement proof.
The proof of the upper bound was found with significant assistance of a large language model (GPT o1-preview). The example witnessing the lower bound was originally found through a computational search, and then interpreted in the proof. The initial code for the lower bound example was mostly produced by GPT4o and Gemini. The proofs appearing in the manuscript were all human-written.

\paragraph{Acknowledgments:} This work was initiated when all three authors were in residence at the Mathematisches Forschungsinstitut Oberwolfach during the fall of 2024, 
participating in the Arbeitsgemeinschaft ``Quantum Signal Processing and Nonlinear Fourier Analysis''. JD was funded by the Simons Collaborations in MPS grant 563916. AH was supported by National Science Foundation grants
DMS-2208535 and DMS-2337678 with travel support to Oberwolfach provided by DMS-2230648.

\section{The upper bound}%
\label{sec:upper}

The upper bound is a refined version of the argument in~\cite{steinerberger2023upper,mariano2023improved}. We substitute estimates on stopping-time probabilities for the Brownian motion with a combination of Talenti's rearrangement inequality and the Szégo-Weinberger bound on the first eigenvalue.

\begin{definition}
    Given two nonnegative functions \(f,g\) with domains of definition \(\Omega_f, \Omega_g\) such that \(|\Omega_f|= |\Omega_g|\), we write \(f^\sharp\) to denote the symmetric decreasing rearrangement of \(f\), and \(f\le_\sharp g\) to denote that \(f^\sharp\le g^\sharp\), or, equivalently, that for all \(\alpha>0\), one has \(|\{f>\alpha\}|\le|\{g>\alpha\}|\). In particular, \(f\le g\) implies \(f\le_{\sharp} g\). 
\end{definition}

As an intermediate step in the proof, we compare \(\psi_\Omega\) with the following function.
\begin{definition}
     For a bounded domain \(\Omega \subset \mathbb R^d\) and any number \(0\le \mu<\lambda_{\Omega}\), let \(u_{\mu,\Omega}\) be the unique solution to
     \begin{equation}
	\label{eq:landscape}
		\begin{cases}
			-\Delta u_{\mu,\Omega}(x) = \mu u_{\mu,\Omega}(x) & \text{ for } x \text{ in } \Omega,\\
			u_{\mu,\Omega}(x) = 1& \text{ for } x \text{ in } \partial \Omega.
		\end{cases}
	\end{equation}
\end{definition}
Existence and uniqueness for~\eqref{eq:landscape} follow from the fact that for \(\mu<\lambda_\Omega\), the operator \(-\Delta-\mu\) is positive definite in \(H^1_{\text{Dirichlet}}(\Omega)\). With the above definitions at hand, the upper bounds in Theorems~\ref{thm:goal} and~\ref{thm:even_more} will follow from showing the chain of inequalities
\begin{equation}
\label{eq:goalchain}
\newcommand{\subeq}[2]{\underset{(\ref*{eq:goalchain}.\text{#1})}{#2}}
\frac{\psi_{\Omega}}{\max_{x\in \partial \Omega} \psi_{\Omega}(x)}
\subeq{A}{\le}
u_{\mu_{\Omega},\Omega}
\subeq{B}{\le_{\sharp}}
u_{\mu_{\Omega},\Ball} 
\subeq{C}{\le} 
u_{\mu_{\Ball},\Ball} = \eta_d.
\end{equation}
The rest of this section is devoted to the proof of these three inequalities. First, we outline the basic strategy.
\begin{enumerate}[label={(\ref*{eq:goalchain}.\Alph*)}]
    \item\label{2.A} The first inequality in \eqref{eq:goalchain} follows from a comparison principle (\Cref{prop:comparison}). Let \(\tilde \psi_\Omega:= \frac{\psi_{\Omega}}{\max_{x\in \partial \Omega}\psi_\Omega(x)}\), so that for all \(x\in \partial \Omega\) one has \(\tilde \psi_\Omega(x) \le 1 = u_{\mu_\Omega, \Omega}(x)\). Since both \(\tilde \psi_\Omega\)  and \(u_{\mu_\Omega, \Omega}\) satisfy the PDE \((\Delta + \mu_{\Omega})(\cdot) = 0\), the inequality can be extended to the interior.
    \item The second inequality is an application of Talenti's inequality (\Cref{prop:gpt_talenti}).
    \item\label{2.C}  The function \((\mu,x) \mapsto u_{\mu,\Ball}(x)\) is an analytic nonnegative function in \([0,\lambda_\Ball) \times \Ball\) (\Cref{Prop analytic}) satisfying \(\partial_\mu u_{\mu,\Ball}(x)>0\) on the interior of its domain. This implies the third inequality. 
\end{enumerate}

 Moreover, if the third step \ref{2.C} is an almost-equality at \(x=0\) (within a difference of \(\epsilon^2\)) it must be that \(\mu_\Ball-\mu_\Omega \le C_d \epsilon^2\) as well, so by the stability of the Szégo-Weinberger inequality (\cite[Theorem 4.1]{MR2899684}) the Fraenkel asymmetry of \(\Omega\) will be \(\lesssim \epsilon\).

 We begin the proof by recalling a standard comparison principle (see, e.g.,~\cite{berestycki1994principal}).
\begin{proposition}
    [Comparison principle]%
    \label{prop:comparison}
   Let \(\Omega\) be a bounded domain in \(\mathbb R^d\), with first Dirichlet eigenvalue \(\lambda_\Omega\). Let \(\mu<\lambda_\Omega\) and let \(u\in C^2(\Omega)\) be a function satisfying
    \[
    \begin{cases}
        -\Delta u \ge \mu u & \text{in } \Omega^\circ,\\
        u\ge 0 & \text{on } \partial \Omega.\\
    \end{cases}
    \]
    Then \(u\ge 0\) in \(\Omega^\circ\).
\end{proposition}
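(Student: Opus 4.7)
The plan is to argue by contradiction: if $u$ takes a negative value somewhere in $\Omega^\circ$, then its negative part $u^-:=\max(-u,0)$ is a nontrivial nonnegative function vanishing on $\partial \Omega$. I would test the differential inequality against $u^-$ and then use the variational (Rayleigh quotient) characterization of $\lambda_\Omega$ to derive a contradiction with the hypothesis $\mu<\lambda_\Omega$.

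Concretely, the first step is to register that $u^-\in H^1_0(\Omega)$. Since $u\in C^2(\Omega^\circ)$ with $u\ge 0$ on $\partial\Omega$, the cleanest way to make this rigorous without imposing extra boundary regularity is to work with the truncations $(u-\epsilon)^-=\max(\epsilon-u,0)$, which are compactly supported in $\Omega^\circ$ for every $\epsilon>0$, and then pass to the limit $\epsilon\to 0$ at the end. Multiplying $-\Delta u\ge \mu u$ by $u^-\ge 0$ and integrating by parts (the boundary term drops out), the identities $\nabla u^-=-\nabla u \cdot \mathbf{1}_{\{u<0\}}$ and $u\, u^-=-(u^-)^2$ yield
\[
\int_\Omega |\nabla u^-|^2\,dx \;\le\; \mu\int_\Omega (u^-)^2\,dx.
\]

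On the other hand, because $u^-\in H^1_0(\Omega)$, the variational characterization of the first Dirichlet eigenvalue gives
\[
\int_\Omega |\nabla u^-|^2\,dx\;\ge\;\lambda_\Omega\int_\Omega (u^-)^2\,dx.
\]
Combining these two displays produces $(\lambda_\Omega-\mu)\int_\Omega (u^-)^2\,dx\le 0$, and since $\mu<\lambda_\Omega$ this forces $u^-\equiv 0$, i.e., $u\ge 0$ on $\Omega^\circ$. The only real subtlety is the justification that $u^-$ (or rather its $\epsilon$-truncation) is an admissible test function, which is the reason for introducing the $\epsilon$-approximation above; once that is in place the argument is a one-line consequence of the Rayleigh quotient.
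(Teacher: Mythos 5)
Your argument is correct and is essentially the paper's own proof: test the differential inequality against the negative part of $u$ and invoke the Rayleigh quotient characterization of $\lambda_\Omega$, using an $\epsilon$-truncation to justify the integration by parts (the paper does the same via the level sets $\{u\le-\epsilon\}$, invoking Sard's theorem). One sign slip to fix: the truncation you want is $(u+\epsilon)^{-}=\max(-u-\epsilon,0)$, supported on $\{u\le-\epsilon\}$, which is compactly contained in $\Omega^\circ$ since $u\ge 0$ on $\partial\Omega$; the function $\max(\epsilon-u,0)$ you wrote down is supported on $\{u<\epsilon\}$, which can reach the boundary, so the claimed compact support fails as stated.
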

\begin{proof}
    Let \(w:=- u \cdot \1_{u\le 0}\), a positive function in \(H_{0}^1(\Omega)\) satisfying \(-\Delta w \le \mu w\) on its support. By a formal computation, we may compute that
    \[\int_{\{w\ge 0\}} |\nabla w|^2 dx = - \int_{\{w\ge 0\}} w \Delta w dx \le \mu \int_{\{w\ge 0\}} w^2  dx.\] 
    If \(w\neq 0\), by the Rayleigh quotient characterization of the first eigenvalue, we would have \(\lambda_\Omega\le \mu\), contradicting the hypothesis.

    The above \emph{integration by parts} equality requires some regularity of the boundary of the set \(\{u\le 0\}\). This can be achieved, for example, by an approximation argument considering the sets \(\{u\le -\epsilon\}\). By Sard's theorem, the sets \(\{u\le -\epsilon\}\) are generically \(C^1\), and for these generic \(\epsilon\) one does have
    \[\int_{\{w\ge \epsilon\}} |\nabla w|^2 dx = - \int_{\{w-\epsilon \ge 0\}} (w-\epsilon)_+ \Delta w  dx.\]
\end{proof}

\begin{proposition}%
\label{prop:gpt_talenti}
	For any \(\mu\le \mu_{\Ball}\) and any Lipschitz domain \(\Omega\) in \(\mathbb R^d\) such that \(|\Omega| = |{\Ball}|\), we have
    \begin{equation*}
    \label{eq:talenti_consequence_level_set}
    u_{\mu, \Omega}
    \le_\sharp
    u_{\mu, \Ball}.
    \end{equation*}
\end{proposition}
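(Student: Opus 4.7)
The strategy is to reduce the statement to the classical Talenti rearrangement inequality (which requires vanishing boundary data) by subtracting off the constant boundary value, and then to use the comparison principle of \Cref{prop:comparison} to transfer the resulting rearrangement bound from a ``frozen coefficient'' problem on the ball to the genuine ball problem for $u_{\mu,\Ball}$.

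First, set $v_\Omega := u_{\mu,\Omega} - 1$ and $V_\Ball := u_{\mu,\Ball} - 1$. Both functions lie in $H^1_0$ of their respective domains and solve the PDE $-\Delta w - \mu w = \mu$. Since $\mu \le \mu_\Ball < \lambda_\Ball \le \lambda_\Omega$ (the last inequality by Faber--Krahn), \Cref{prop:comparison} applied to these functions gives $v_\Omega \ge 0$ and $V_\Ball \ge 0$. Now apply the classical Talenti inequality to $v_\Omega$ with nonnegative source $f_\Omega := \mu(v_\Omega + 1)$; since rearrangement commutes with the affine map $t \mapsto \mu(t+1)$ on nonnegative functions, $f_\Omega^\sharp = \mu(v_\Omega^\sharp + 1)$, and Talenti's theorem therefore yields $v_\Omega^\sharp \le V$ on $\Ball$, where $V \in H_0^1(\Ball)$ solves
\begin{equation*}
    -\Delta V = \mu(v_\Omega^\sharp + 1) \ \text{in } \Ball, \qquad V|_{\partial \Ball} = 0.
\end{equation*}

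To conclude, compare $V$ with $V_\Ball$: subtracting the two PDEs and using $V \ge v_\Omega^\sharp$ gives
\begin{equation*}
    -\Delta(V_\Ball - V) = \mu V_\Ball - \mu v_\Omega^\sharp \ge \mu(V_\Ball - V),
\end{equation*}
with $V_\Ball - V = 0$ on $\partial \Ball$. Since $\mu \le \mu_\Ball < \lambda_\Ball$, \Cref{prop:comparison} yields $V_\Ball \ge V$, and chaining inequalities shows $u_{\mu,\Omega}^\sharp = 1 + v_\Omega^\sharp \le 1 + V_\Ball = u_{\mu,\Ball}$, which is the desired rearrangement bound.

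The main obstacle is that the PDE for $u_{\mu,\Omega}$ has a source term $\mu u$ depending on $u$ itself, so classical Talenti cannot be invoked as a black box. The frozen-coefficient step above circumvents this by treating the source as a given nonnegative function during rearrangement, and then using the comparison principle for $-\Delta - \mu$ on the ball to propagate the bound to $u_{\mu,\Ball}$. The remaining ingredients — the rearrangement identity $(v_\Omega + 1)^\sharp = v_\Omega^\sharp + 1$, the hypotheses of Talenti, and the sign conditions for \Cref{prop:comparison} — are routine.
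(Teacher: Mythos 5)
Your proof is correct, but it takes a genuinely different route from the paper's. The paper proves the proposition by iteration: it introduces the affine solution operator $T_{\mu,\Omega}f$ (solving $-\Delta T_{\mu,\Omega}(f)=\mu f$ with boundary value $1$), notes that it is a contraction for $\mu<\lambda_\Omega$ so that $T^n_{\mu,\Omega}1\to u_{\mu,\Omega}$ uniformly, and shows by induction that $T^n_{\mu,\Omega}1\le_\sharp T^n_{\mu,\Ball}1$, using Talenti's inequality together with monotonicity of $T_{\mu,\Ball}$ in the source at each step. You instead linearize in one shot: you freeze the source $f_\Omega=\mu\,u_{\mu,\Omega}\ge 0$, apply classical Talenti once to the zero-boundary-data function $v_\Omega=u_{\mu,\Omega}-1$, and then close the loop on the ball by applying the comparison principle of \Cref{prop:comparison} to the shifted operator $-\Delta-\mu$, which converts the ``frozen'' majorant $V$ into the genuine solution $V_{\Ball}$. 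Your approach avoids the fixed-point iteration and its convergence argument entirely, at the cost of needing the a priori positivity $u_{\mu,\Omega}\ge 1$ up front (which you correctly obtain from \Cref{prop:comparison} and Faber--Krahn) and one extra comparison step on the ball; both proofs ultimately rest on the same two ingredients, Talenti's inequality and $\mu\le\mu_\Ball<\lambda_\Ball\le\lambda_\Omega$. Two cosmetic remarks: the final display should end with the observation that $u_{\mu,\Ball}$ is radially decreasing, so that $1+V_\Ball=u_{\mu,\Ball}=u_{\mu,\Ball}^\sharp$ and the pointwise bound really is the statement $u_{\mu,\Omega}\le_\sharp u_{\mu,\Ball}$; and the case $\mu=0$ should be dispatched separately (it is trivial, as $u_{0,\Omega}\equiv 1$), since several of your inequalities silently divide the roles of $\mu>0$ and $\mu\ge 0$.
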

\begin{proof}
    Let \(T_{\mu,\Omega} f\) be the solution operator given by the PDE
    \[
    \begin{cases}
         - \Delta T_{\mu, \Omega}(f) = \mu f & \text{in } \Omega,\\
         T_{\mu,\Omega}(f) = 1 & \text{on } \partial \Omega.\\
    \end{cases}
    \]
    This operator is a contraction in \(L^2(\Omega)\) as long as \(\mu<\lambda_{\Omega}\) because the operator \(\frac{-1}{\mu}\Delta\) with zero Dirichlet boundary conditions has smallest eigenvalue \(\frac{\lambda_{\Omega}}{\mu}>1\). Therefore, it has as a unique fixed point \(u_{\mu,\Omega}\). By elliptic regularity, the functions \(T^{n}_{\mu,\Omega} 1\) will converge uniformly to this fixed point \(u_{\mu,\Omega}\). It suffices to show, by induction, that for all \(n\ge 0\) one has the relation \(T^{n}_{\mu, \Omega}1 \le_{\sharp} T^{n}_{\mu, \Ball}1\). 
    
    \begin{itemize}
        \item  The case \(n=0\) follows from \(1^\sharp =1\).

        \item For the induction step, we use Talenti's rearrangement inequality. Talenti's inequality implies that \(T_{\mu,\Omega} f\le_\sharp T_{\mu,\Ball} f^\sharp\). On the other hand, the maximum principle for Laplace supersolutions implies that whenever \(f^\sharp \le g^\sharp\) one has \(T_{\mu,\Ball} f^\sharp \le T_{\mu,\Ball}g^\sharp \). By combining both of these observations with the induction hypothesis, we may conclude that
    \[
    T_{\mu, \Omega}(T^n_{\mu, \Omega} 1) \le_\sharp
    T_{\mu,\Ball}( (T^n_{\mu, \Omega} 1)^\sharp)
    \le 
    T_{\mu, \Ball}( T^n_{\mu,\Ball} 1).
    \]
    \end{itemize}
    
\end{proof}

\begin{proposition}\label{Prop analytic}
    Let \(v(\mu,x):= u_{\mu, \Ball}(x)\), with domain \((0,\lambda_{\Ball})\times \Ball\). Then \(v\) is an analytic function and
    \[
    v(\mu,x) =\frac{\varphi_{\Ball}\left(|x|\sqrt{\mu/\lambda_{\Ball}}\right)}{{\varphi_{\Ball}\left(\sqrt{\mu/\lambda_{\Ball}}\right)}}.
    \]
    Moreover, \(v(\mu,x)\ge 1\) and \(\partial_\mu v(\mu,x)>0\) in \((0,\lambda_{\Ball})\times (\Ball)^\circ\).
\end{proposition}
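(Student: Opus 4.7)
My plan is to write down the explicit scaled-eigenfunction formula first and use it to deduce the other three claims. Set $c := \sqrt{\mu/\lambda_{\Ball}} \in (0,1)$ and define $\widetilde v(x) := \varphi_{\Ball}(c|x|)$ on $\Ball$. Since $\varphi_{\Ball}$ is radial and satisfies $-\Delta \varphi_{\Ball} = \lambda_{\Ball} \varphi_{\Ball}$, a one-line scaling computation gives $-\Delta \widetilde v = c^2 \lambda_{\Ball} \widetilde v = \mu \widetilde v$ in $\Ball$. On $\partial \Ball$ the value $\widetilde v \equiv \varphi_{\Ball}(c)$ is a strictly positive constant (since $c<1$ and the principal Dirichlet eigenfunction is strictly positive in the interior), so dividing by it produces a solution of~\eqref{eq:landscape} on $\Ball$, which by uniqueness coincides with $v(\mu,x)$. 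Analyticity of $v$ on $(0,\lambda_{\Ball})\times \Ball$ is then immediate: writing $\varphi_{\Ball}(r) = C_d\, r^{1-d/2} J_{d/2-1}(r\sqrt{\lambda_{\Ball}})$ and using that $s\mapsto J_{d/2-1}(s)/s^{d/2-1}$ is entire in $s^2$, both numerator and denominator are real-analytic in $(\mu,x)$, and the denominator never vanishes.

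For the lower bound $v\geq 1$, I would apply Proposition~\ref{prop:comparison} twice. First to $v$ itself (with $-\Delta v = \mu v$ in $\Ball^\circ$ and $v = 1 \geq 0$ on $\partial \Ball$) to conclude $v\geq 0$. Then to $w := v - 1$, which satisfies $-\Delta w = \mu v = \mu w + \mu \geq \mu w$ and $w = 0$ on $\partial \Ball$; the comparison principle gives $w \geq 0$.

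For strict monotonicity $\partial_\mu v > 0$, analyticity in $\mu$ permits differentiating the PDE: setting $\dot v := \partial_\mu v$, one finds $-\Delta \dot v - \mu \dot v = v$ in $\Ball^\circ$ with $\dot v = 0$ on $\partial\Ball$. The right-hand side is strictly positive by the previous step, so Proposition~\ref{prop:comparison} yields $\dot v \geq 0$, and the strong maximum principle (Hopf's lemma) upgrades this to $\dot v > 0$ on $(\Ball)^\circ$.

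There is no substantive obstacle; the only points to watch are (i) applying the comparison principle to the \emph{shifted} function $v-1$ rather than $v$ itself, and (ii) invoking the strong maximum principle to upgrade the weak inequality for $\partial_\mu v$ to a strict one.
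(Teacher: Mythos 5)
Your proof is correct, and its overall structure (verify the explicit formula by scaling plus uniqueness, differentiate the PDE in $\mu$, apply Proposition~\ref{prop:comparison}) matches the paper's. Two sub-steps differ. For $v\ge 1$, the paper reads the inequality directly off the formula, using that $\varphi_{\Ball}$ is radially \emph{decreasing} so that $\varphi_{\Ball}(|x|\sqrt{\mu/\lambda_{\Ball}})\ge \varphi_{\Ball}(\sqrt{\mu/\lambda_{\Ball}})$; your comparison-principle argument on $w=v-1$ (valid since $-\Delta w=\mu w+\mu\ge \mu w$ and $\mu>0$) is a fine PDE-only substitute that never uses monotonicity of the radial profile. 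For strictness of $\partial_\mu v$, the paper subtracts an explicit barrier, showing $w=\partial_\mu v-\epsilon_d(1-|x|^2)$ satisfies $(-\Delta-\mu)w\ge 0$ with zero boundary data, which yields the quantitative bound $\partial_\mu v\ge \epsilon_d(1-|x|^2)$ from the comparison principle alone; you instead get $\partial_\mu v\ge 0$ and upgrade via the strong maximum principle (what you cite as Hopf's lemma is really the interior strong maximum principle here — $\partial_\mu v$ is a nonnegative, not identically zero superharmonic function since $-\Delta\partial_\mu v=\mu\,\partial_\mu v+v>0$). Your route needs one more classical ingredient but avoids the barrier computation; the paper's gives a slightly stronger, quantitative conclusion. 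You also spell out the analyticity via the entire function $s\mapsto J_{d/2-1}(s)/s^{d/2-1}$ of $s^2$, which the paper leaves implicit — a small plus.
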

\begin{proof}
    The function \(\varphi_{\Ball}(x)\) is a strictly positive radially decreasing function in \(\Ball\) satisfying the equation \(-\Delta \varphi_{\Ball} = \lambda_\Ball\varphi_{\Ball}\). This simultaneously proves that \(v(\mu, x)\) satisfies the PDE that defines \(u_{\mu, \Ball}(x)\) and the fact that \(v\ge 1\).

    One can take derivatives of the PDE \((-\Delta_x -\mu)v(\mu,x)=0\) to see that \(\partial_\mu v(\mu,x)\) satisfies the PDE
    \[
    \begin{cases}
        (-\Delta_x-\mu) \partial_\mu v= v \ge 1,\\
        \partial_\mu v|_{\partial \Omega} = 0.
    \end{cases}
    \]
    In particular, for some small constant \(\epsilon = \epsilon_d\) the function \(w = \partial_\mu v - \epsilon(1-|x|^2)\) satisfies the differential inequalities
    \[
    \begin{cases}
        (-\Delta_x-\mu)  w\ge 0,\\
        w |_{\partial \Omega} = 0.
    \end{cases}
    \]
    By \Cref{prop:comparison}, this shows that \(w\ge 0\), and therefore that \(\partial_\mu v \ge \epsilon (1-|x|^2)\). 
\end{proof}

\section{The lower bound construction}\label{sec:heuristics}
If one believes that the upper bounds in \Cref{sec:upper} are sharp, the stability results imply that extremizing sequences to this sharp bound must, in a certain sense, converge to a ball. The ball, however, is not a maximizer: the hot spots ratio for the ball is \(1\). 
The main obstruction in the case of the ball is that the first Neumann eigenfunction is not radial, while the upper bounds in \Cref{sec:upper} were all achieved by radially symmetric functions. In order to produce a \emph{radially symmetric} first eigenfunction, one \emph{weakly disconnects} the ball into an \emph{inner ball} and an \emph{outer annulus} by removing parts of the domain between both of them (see \Cref{fig:sieve}). 
It is then energetically favorable for the first eigenfunction to \emph{greatly change} across this weakly connected region. This favors radially symmetric eigenfunctions that are smooth on either side of the weakly connected region and that \emph{jump} across this region. 
\begin{figure}[hbt!]
    \centering
    \begin{minipage}{.3\textwidth}{\includegraphics[width=\textwidth]{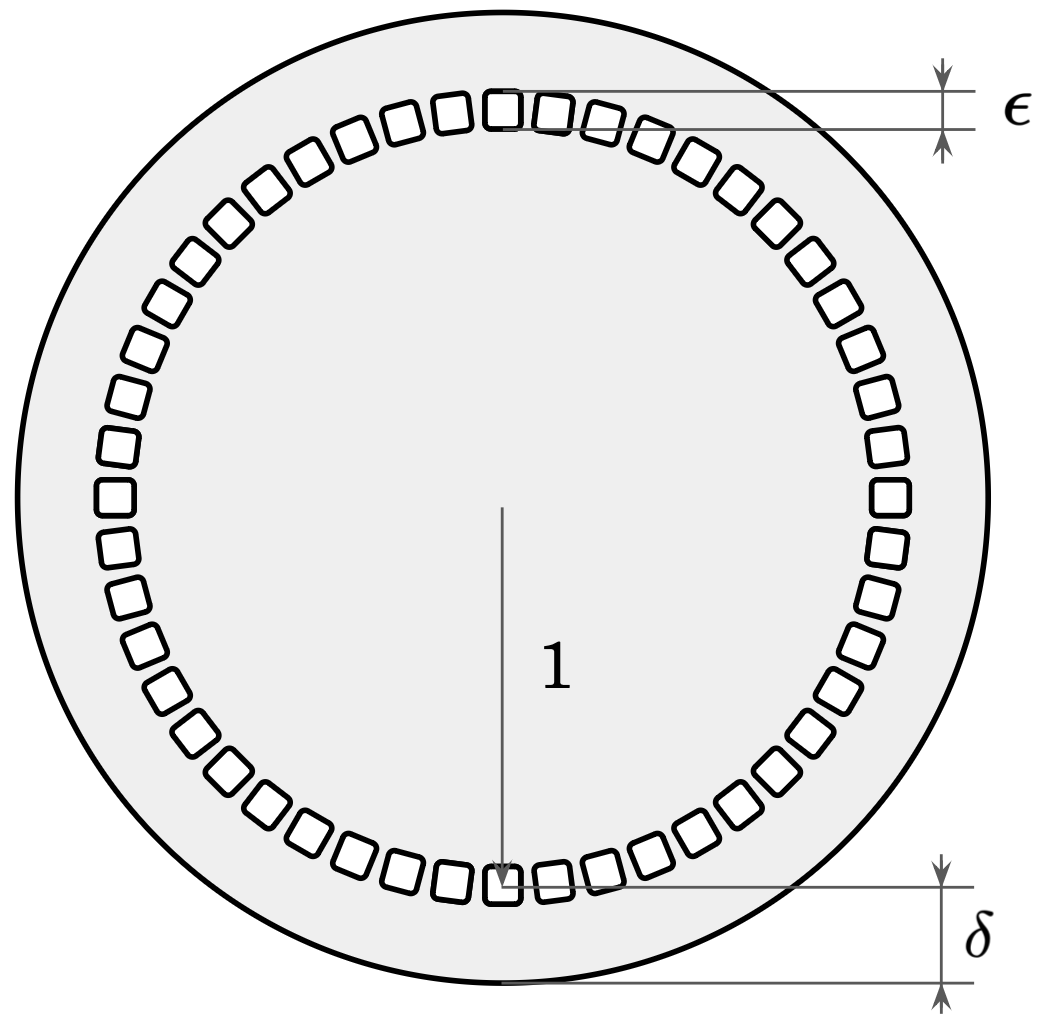}}%
    \end{minipage}
    \hspace{1em}
    \begin{minipage}{.4\textwidth}
        \includegraphics[width=0.5\linewidth]{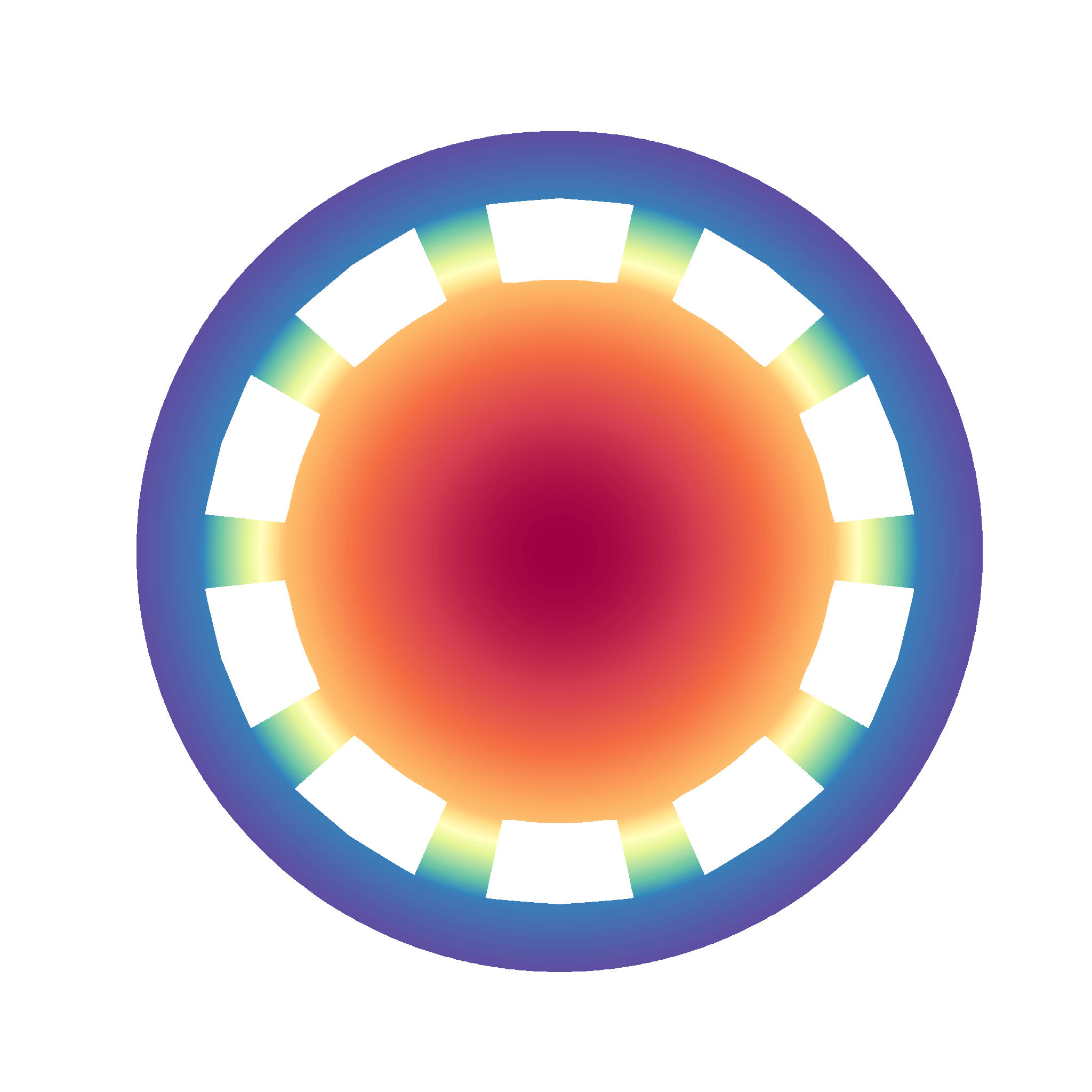}%
        \includegraphics[width=0.5\linewidth]{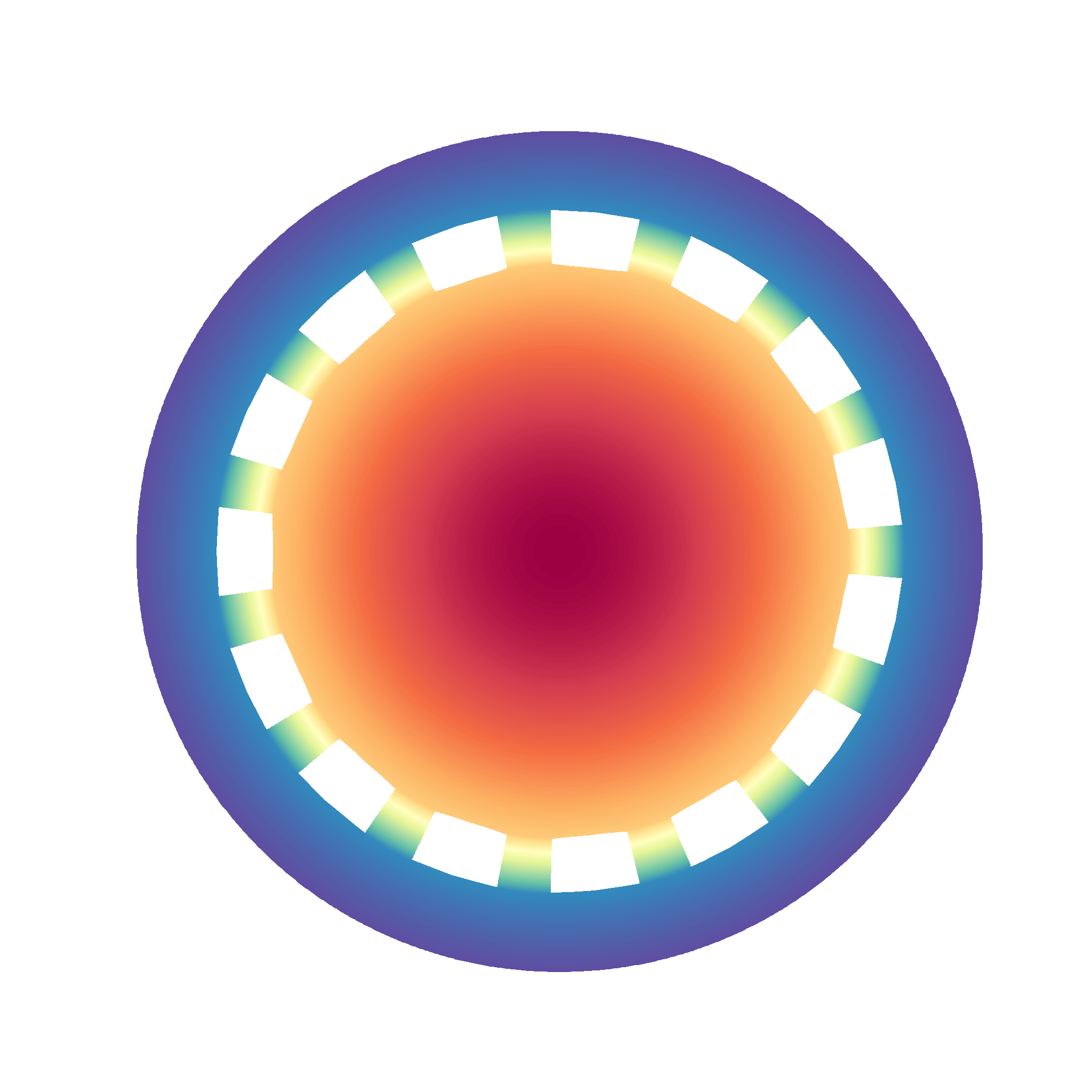}
        \\[-2.2em]    
        
        \includegraphics[width=0.5\linewidth]{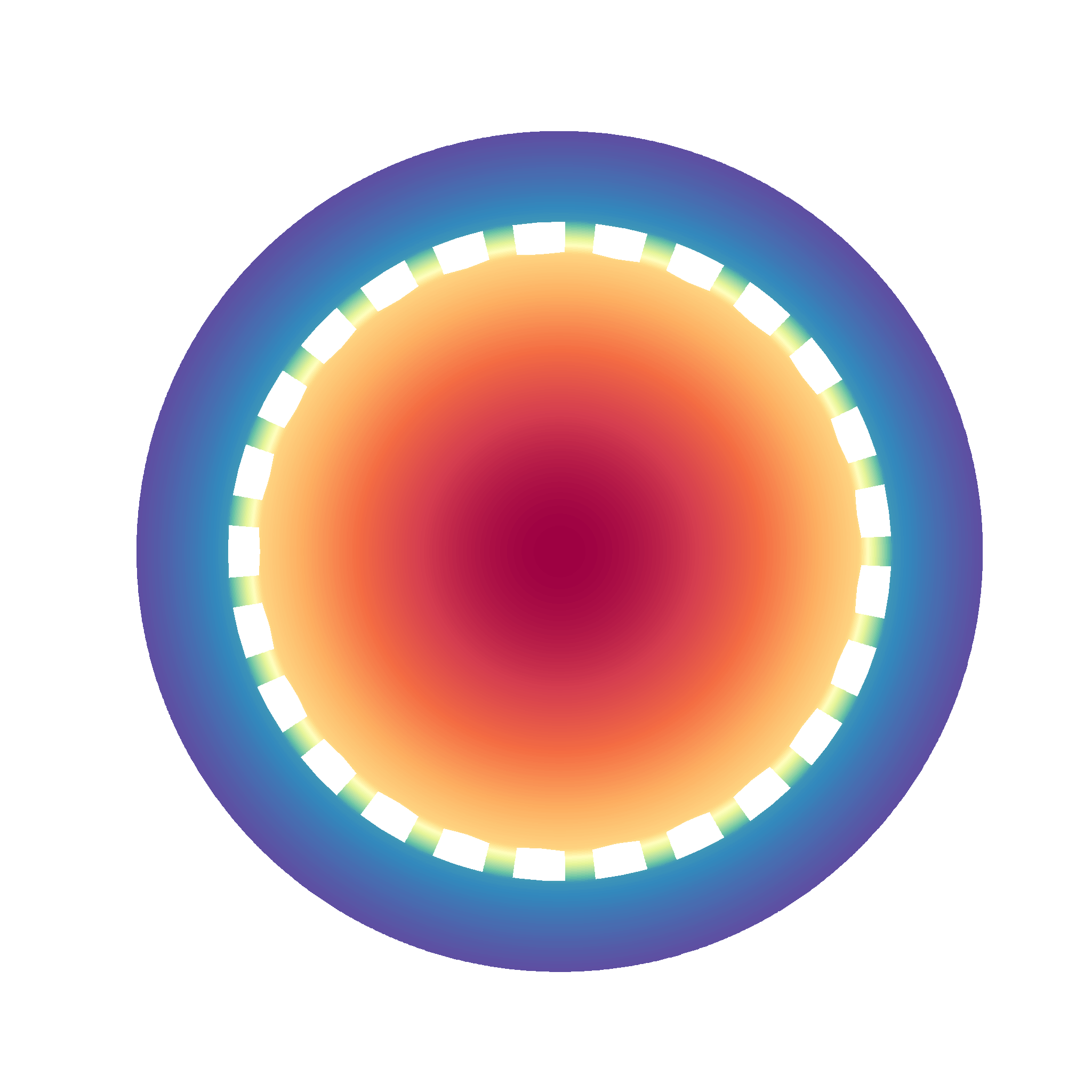}%
        \includegraphics[width=0.5\linewidth]{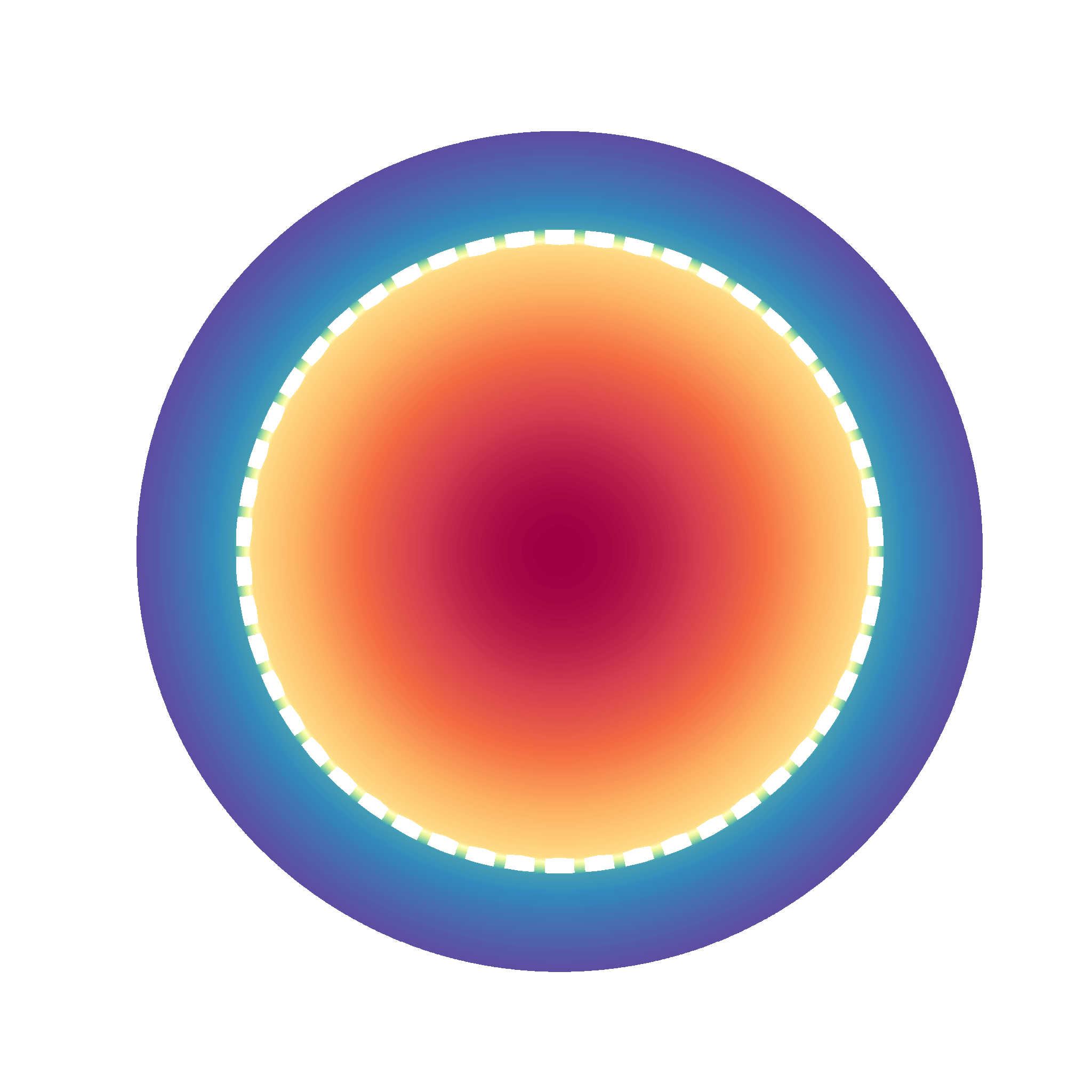}%
    \end{minipage}
    \caption{(Left) Sketch of the Neumann sieve domains. (Right) First Neumann Laplace eigenfunction for a sequence of sieve domains. In the limit when the number of holes goes to infinity one recovers a radially symmetric \emph{effective problem}. The effective problem is studied in \Cref{sec:effective_problem}, and the fact that the convergence holds is studied in \Cref{sec:neumann_sieve}.}%
    \label{fig:sieve}
\end{figure}

The process of obtaining an effective problem by adding small occlusions to a domain and sending the size of the occlusions to zero in a suitable way is known as a \emph{Neumann sieve} construction, and it has been extensively studied (see, e.g.,~\cite{marchenko1966second, damlamian1985probleme, MR916715} and \cite{khrabustovskyi2025neumann} for a modern treatment including a brief history of the problem). When the size and separation of the occlusions is sent to zero at the right rate, the limiting \emph{effective problem} has a new Robin boundary condition on a hypersurface, with a Robin parameter that one can tune. The usage of Neumann sieves to build counterexamples to the \emph{hot spots} conjecture was briefly suggested by Jerison and Nadirashvili in \cite{jerison2000hot}.

The sharp lower bound for the hot spots constant comes from carefully balancing the \emph{connectivity} between the interior and exterior regions: If the two regions are essentially disconnected, the first eigenvalue will be almost zero. Hence, the first eigenfunction will be almost constant in the inner circle and almost constant in the outer annulus, giving a hot spots ratio of almost 1. If the regions are strongly connected, the most energetically favorable eigenfunction will not be radial (as happens for the ball, where there is no disconnection at all). Balancing this connectivity parameter gives the sharp hot spots ratio.  For this reason, in our setup we will consider not only a single Neumann sieve, but a family of sieves depending on parameters \(\delta\) (the thickness of the outer annulus) and \(\beta\) (related to the connectivity strength of the sieve). The analysis proceeds as follows:
\begin{enumerate}
    \item First, we send the number of holes in \Cref{fig:sieve} to infinity and the size of the holes to zero at the right rate, in order to obtain an \emph{effective problem} depending on two parameters: the distance \(\delta\) of the sieve to the boundary, and a parameter \(\beta\) that characterizes how \emph{connected} the two sides of the Neumann sieve are to each other. This convergence is, in the appropriate sense, uniform, and one can \emph{transfer} lower bounds from this effective hot spots problem to the original hot spots problem. 
    \item The hot spots ratio of the examples arising in the effective problem can be computed explicitly. By optimizing over these parameters, (which involves sending \(\delta\) to zero and \(\beta\) to \(\mu({\Ball})\)) one achieves the desired lower bound for \(S_d\). Notice that sending \(\delta\to 0\) in these radial sieve domains makes them closer and closer to a ball, which is consistent with the \emph{equality} case in the proofs in \Cref{sec:upper} being sharp for a ball. 
\end{enumerate}

\subsection{Convergence to an effective problem}

The behavior seen in \Cref{fig:sieve} as the number of holes goes to infinity is governed by an \emph{effective bilinear form} \(D_{\beta,\delta}\) which can be written as

\begin{equation}%
\label{eq:effective_bilinear}
    D_{\beta, \delta}(f,f)=
    \int_{B_1(\mathbb R^d)} |\nabla(f)|^2 dx + \int_{B_{1+\delta}(\mathbb R^d)\setminus B_1(\mathbb R^d)} |\nabla(f)|^2 dx
    +\beta\delta\fint_{\mathbb{S}^{d-1}} |f(1^+  e)-f(1^-  e)|^2 d e,
\end{equation}
with domain \(H^1(B_{1+\delta}(\mathbb{R}^d) \setminus\SS^{d-1})\). Here, the notation \(f(1^+  e)\) (resp.~\(f(1^- e)\)) denotes the \emph{outer} (resp.~\emph{inner}) \(H^1\) trace onto the boundary \(\SS^{d-1}\). The main purpose of \Cref{sec:neumann_sieve} is to prove the following approximation result.

\begin{proposition}\label{prop:neumann_goal}
    Let \(\beta, \delta>0\) be such that \(\psi_{\beta, \delta}^{(1)}\), the first non-constant eigenfunction of \(D_{\beta,\delta}\), is radially symmetric. Then there exists a sequence of \emph{Neumann sieve} domains \(\Omega_n\) in \(\RR^d\) with first Neumann Laplace eigenfunctions \(\psi_n\) such that:
    \begin{enumerate}
        \item The domains \(\Omega_n\) converge to a ball in the Hausdorff metric.
        \item The boundaries \(\partial \Omega_n\) converge to \(\SS^{d-1} \sqcup (1+\delta) \SS^{d-1}\).
        \item There exists a choice of signs \(\sigma_n \in \{\pm 1\}\) (which we will without loss of generality assume to be $1$) such that
        \begin{enumerate}
            \item \(\sigma_n \psi_n \to \psi^{(1)}_{\beta,\delta}\) locally uniformly in \(\overline {B_{1+\delta}(\mathbb R^d)}\setminus  \SS^{d-1}\).
            \item \(\limsup_{n\to\infty} \max_{x\in \partial \Omega_n} \sigma_n\psi_n(x) = \max(\psi^{(1)}_{\beta, \delta}(1^-),\psi^{(1)}_{\beta, \delta}(1^+), \psi^{(1)}_{\beta, \delta}(1+\delta))\).
        \end{enumerate}
    \end{enumerate}
\end{proposition}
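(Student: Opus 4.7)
The plan is to construct the sieve domains $\Omega_n$ explicitly, establish Mosco convergence of their Dirichlet forms to $D_{\beta,\delta}$, and then upgrade the resulting $L^2$ spectral convergence to local uniform convergence using interior elliptic regularity. For the construction, I would start from $B_{1+\delta}(\RR^d)$ and, for each $n$, distribute $N_n \sim n^{d-1}$ nearly-equidistributed points $\{x_i^n\}$ on $\mathbb{S}^{d-1}$. Around each $x_i^n$ I would remove a thin neighborhood of $\mathbb{S}^{d-1}$ of small radial thickness except for a small circular aperture of radius $r_n$ through which the inner ball $B_1(\RR^d)$ communicates with the outer annulus $B_{1+\delta}(\RR^d)\setminus B_1(\RR^d)$. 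The radii $r_n$ are tuned so that the scaled total sieve capacity converges to an explicit multiple of $\beta\delta$; for $d\ge 3$ this corresponds to $r_n$ of order $n^{-(d-1)/(d-2)}$ with a constant depending on $\beta$ and $\delta$, while for $d=2$ one uses logarithmic capacity instead. Smoothing each aperture to a tubular neck keeps $\Omega_n$ Lipschitz, as required by Definition~\ref{HSR def}.

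The analytic core is Mosco convergence. Let $a_n(u,u) := \int_{\Omega_n}|\nabla u|^2\, dx$ with form domain $H^1(\Omega_n)$, viewed as a closed form on $L^2(B_{1+\delta}(\RR^d))$ after extending functions across the removed material (for instance by harmonic extension). By the classical Neumann-sieve theory (Marchenko--Khruslov, Cioranescu--Murat, Damlamian, and the modern exposition in~\cite{khrabustovskyi2025neumann}), with the capacity scaling calibrated as above one has $a_n \to D_{\beta,\delta}$ in the sense of Mosco. This is equivalent to strong resolvent convergence of the associated self-adjoint Neumann Laplacians, which in turn yields convergence of the discrete spectra and of the spectral projections. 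In particular $\mu_{\Omega_n}^{(1)} \to \mu^{(1)}_{\beta,\delta}$, and after fixing an $L^2$ normalization and a sign $\sigma_n \in \{\pm 1\}$ we obtain $\sigma_n \psi_n \to \psi^{(1)}_{\beta,\delta}$ strongly in $L^2(B_{1+\delta}(\RR^d))$. Because $\psi^{(1)}_{\beta,\delta}$ is radial by hypothesis and the radial eigenvalue of $D_{\beta,\delta}$ is simple and lies strictly below every non-radial eigenvalue, the convergence holds along the full sequence without any extraction.

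The local uniform convergence and the boundary statement then follow from standard elliptic regularity. On any compact $K \subset \overline{B_{1+\delta}(\RR^d)}\setminus \mathbb{S}^{d-1}$ the set $K\cap \Omega_n$ is bounded away from the sieve for all $n$ sufficiently large, so $\sigma_n \psi_n$ solves there a smooth Neumann eigenvalue problem with uniformly bounded eigenvalue. Interior and boundary Schauder estimates yield uniform $C^{1,\alpha}(K)$ bounds, which together with the $L^2$ convergence upgrade to uniform convergence on $K$ and prove (3a). For (3b), the maximum of $\sigma_n \psi_n$ on $\partial \Omega_n$ is attained on one of three pieces: the outer sphere $\partial B_{1+\delta}(\RR^d)$, the inner side of a removed membrane just outside $\mathbb{S}^{d-1}$, or its counterpart just inside $\mathbb{S}^{d-1}$. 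By the local uniform convergence these respectively approach $\psi^{(1)}_{\beta,\delta}(1+\delta)$, $\psi^{(1)}_{\beta,\delta}(1^+)$ and $\psi^{(1)}_{\beta,\delta}(1^-)$, and taking the maximum gives the claimed identity for the $\limsup$.

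The main obstacle is the Mosco step, specifically the $\limsup$ (recovery sequence) inequality: given a target $f \in H^1(B_{1+\delta}(\RR^d)\setminus \mathbb{S}^{d-1})$ with prescribed trace jump $f(1^+e)-f(1^-e)$, one must build $u_n \in H^1(\Omega_n)$ whose Dirichlet energy near each aperture concentrates so as to contribute exactly $\beta\delta \fint_{\mathbb{S}^{d-1}}|f(1^+e)-f(1^-e)|^2\, de$ in the limit. This reduces to a capacitary minimization problem around each hole, glued to a smooth interpolation of $f$ in a thin shell; it is precisely this calculation that pins down the exponent of $r_n$. A secondary subtlety is that the finite arrangement of apertures breaks exact rotational symmetry of $\Omega_n$, so the $\psi_n$ need not be radial; it is the simplicity of the radial eigenvalue of $D_{\beta,\delta}$ that ensures the non-radial components vanish in the limit.
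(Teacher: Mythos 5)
Your route is genuinely different from the paper's. You use the classical \emph{thin} Neumann sieve: apertures of radius $r_n\sim n^{-(d-1)/(d-2)}$ whose rescaled capacity converges to a multiple of $\beta\delta$, together with Mosco convergence of the forms imported from the Marchenko--Khruslov/Cioranescu--Murat/Damlamian line of work. The paper deliberately avoids this: it builds a \emph{thick} sieve, in which the channels have radial length $\epsilon$ much larger than their width and occupy a prescribed fraction of $\SS^{d-1}$ equidistributed at exponentially small scales. With that geometry the jump term is produced by nothing more than linear interpolation along the channels in one direction and a Poincar\'e inequality on radial segments in the other; two explicit operators ($\operatorname{RI}$ and $\operatorname{RE}$) approximately intertwine the quadratic forms, and Courant--Fischer gives two-sided eigenvalue convergence. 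What your approach buys is contact with a large existing literature; what the paper's buys is a short, self-contained argument that sidesteps the capacitary corrector construction entirely. Note that the paper explicitly states it could not locate a literature result implying this exact formulation, so the recovery-sequence half of your Mosco step --- which you yourself identify as the main obstacle and do not carry out --- cannot simply be cited; as written, the central analytic step of your proof is deferred rather than proved.

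There is also a concrete gap in your argument for (3b). The local uniform convergence of (3a) holds only on compact subsets of $\overline{B_{1+\delta}(\RR^d)}\setminus\SS^{d-1}$, but the membrane/channel walls --- which are part of $\partial\Omega_n$ and are exactly where the hot spots ratio could in principle degrade --- accumulate on $\SS^{d-1}$ as $n\to\infty$. You cannot conclude that the values of $\sigma_n\psi_n$ on these walls approach $\psi^{(1)}_{\beta,\delta}(1^\pm)$ from convergence away from the sieve; indeed, near capacity-scaled apertures the eigenfunctions have large gradients, so pointwise control there is not automatic. The paper closes this with a separate barrier argument: the explicit supersolution
\[
b_{s,\epsilon}(x)=\bigl((\mu^{(1)}_{\beta,\delta}+1)(s^2-(|x|-1)^2)+1\bigr)\max_{r\in\{1\pm s\}}\max_{e\in\SS^{d-1}}\psi^{(1)}_{\Omega_\epsilon}(r e)
\]
dominates $\psi^{(1)}_{\Omega_\epsilon}$ on the shell $\{|x|\in[1-s,1+s]\}$, giving $\limsup_{\epsilon\to0}\max_{N_\epsilon}\psi^{(1)}_{\Omega_\epsilon}\le\max\bigl(\psi^{(1)}_{\beta,\delta}(1^+),\psi^{(1)}_{\beta,\delta}(1^-)\bigr)$ after sending $s\to0$. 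Some maximum-principle input of this kind is indispensable for (3b), and your proposal is missing it.
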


The Neumann sieve construction has been extensively studied in the literature and it is likely possible to deduce \Cref{prop:neumann_goal} from the proofs of known results. However, we were unable to find a result that implied this exact formulation, and therefore we provide a proof of \Cref{prop:neumann_goal} in \Cref{sec:neumann_sieve} for completeness.

\subsection{Optimizing over the effective problems}

\Cref{prop:neumann_goal} reduces the problem of finding lower bounds for the hot spots constant to finding lower bounds for the \emph{effective hot spots constant} associated to the operators \(D_{\beta,\delta}^{(d)}\), defined as
\[
S_d \ge \tilde S_d:= \max_{\substack{\beta>0\\ \delta>0\\ \psi^{(1)}_{\beta, \delta} \text{ is radial}}} 
\frac{
\max_{0\le r \le 1} \psi^{(1)}_{\beta,\delta }(r)}{
\max_{r\in \{1^-,1^+,1+\delta\}} \psi^{(1)}_{\beta,\delta }(r)
},
\]
where \(\psi^{(1)}_{\beta,\delta }\) is taken with the sign normalization that makes it positive at the origin. 
In \Cref{sec:effective_problem}, we will optimize over the possible values of \(\beta, \delta\) to obtain the lower bounds in Theorems~\ref{thm:goal} and~\ref{thm:even_more}. More precisely, we will prove the following proposition.

\begin{proposition}\label{prop:effective_goal}
    For any \(d\ge 2\) and all \(\beta<\mu_{\Ball}\) there is a \(\delta_0:=\delta_0(d,\beta)\) such that for all \(0<\delta<\delta_0\) the eigenfunction \(\psi^{(1)}_{\beta,\delta}\) is radial. Moreover,
    \[\lim_{\beta \to \mu_{\Ball}^{-}} \lim_{\delta \to 0} 
    \frac{\psi^{(1)}_{\beta,\delta}(r)}{\max_{r\in \{1^-,1^+,1+\delta\}} \psi^{(1)}_{\beta,\delta}(r) } = \optimal{r}.\]
\end{proposition}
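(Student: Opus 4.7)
The plan is to diagonalize the effective bilinear form $D_{\beta,\delta}$ by spherical-harmonic decomposition, reducing the eigenvalue problem to a family of one-dimensional Bessel-type problems on $[0,1) \cup (1,1+\delta]$ indexed by the spherical harmonic degree $l \ge 0$, and then perform an asymptotic analysis in $\delta$ sector by sector. Writing $f(r,\omega) = \sum_l f_l(r) Y_l(\omega)$, both $D_{\beta,\delta}(f,f)$ and $\|f\|_{L^2}^2$ decouple across $l$ (the two Dirichlet integrals and the jump $\beta\delta\fint_{\mathbb{S}^{d-1}}|f^+ - f^-|^2$ all factor by orthogonality of the spherical harmonics). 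The $l$-th radial ODE is $-u_l'' - \tfrac{d-1}{r} u_l' + \tfrac{l(l+d-2)}{r^2} u_l = \mu u_l$, regular at $r=0$, with outer Neumann condition $u_l'(1+\delta) = 0$ and the matching $u_l'(1^-) = u_l'(1^+) = \beta\delta(u_l(1^+) - u_l(1^-))$ arising from the first variation of the jump term. The first nontrivial eigenvalue of $D_{\beta,\delta}$ is then the minimum, over $l$, of the first nontrivial $l$-sector eigenvalue $\mu_l(\beta,\delta)$.

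For $l=0$, the regular inner solution is $u_0(r) = r^{1-d/2} J_{d/2-1}(r\sqrt{\mu})$. A Taylor expansion of the outer problem across the thin annulus, combined with $u_0'(1+\delta) = 0$, yields $u_0'(1^+) = \mu\delta\, u_0(1^+) + O(\delta^2)$, and hence $u_0(1^+)/u_0(1^-) = \beta/(\beta-\mu) + O(\delta)$. Substituting into the inner Robin condition reduces the eigenvalue problem to the asymptotic transcendental equation
\[F(\mu) := -\frac{\sqrt{\mu}\, J_{d/2}(\sqrt{\mu})}{J_{d/2-1}(\sqrt{\mu})} = \frac{\beta\delta\mu}{\beta - \mu} + O(\delta^2),\]
which for small $\delta$ has a first nontrivial root on the branch $\mu_0(\beta,\delta) = \beta + O(\delta)$ approaching $\beta$ from \emph{above}. (A Rayleigh-quotient computation with a piecewise-constant test function, orthogonal to constants, confirms that this branch carries the first nontrivial eigenvalue.) The same expansion applied to $l \ge 1$ produces an equation whose leading-order term merely enforces the Neumann condition on $B_1$ in the $l$-sector, so $\mu_l(\beta,\delta)$ converges as $\delta \to 0$ to the first Neumann eigenvalue of $B_1$ restricted to spherical harmonics of degree $l$. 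This limit equals $\mu_{\Ball}$ for $l=1$ and is strictly larger for $l \ge 2$, so for $\beta < \mu_{\Ball}$ and $\delta < \delta_0(d,\beta)$ one has $\mu_0(\beta,\delta) < \mu_{\Ball} \le \mu_l(\beta,\delta)$ for every $l \ge 1$, proving the first claim.

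With the sign normalization $\psi^{(1)}_{\beta,\delta}(0) > 0$, the inner part $u_0$ is strictly positive on $[0,1]$ (since $\mu_0 < \lambda_{\Ball}$), while the outer part $v \approx u_0(1^+) = \beta u_0(1^-)/(\beta - \mu_0)$ has sign \emph{opposite} to $u_0(1^-)$ (because $\mu_0 > \beta$) and magnitude diverging like $\delta^{-1}$. Consequently $\max_{r\in\{1^-,1^+,1+\delta\}} \psi^{(1)}_{\beta,\delta}$ is attained at $r = 1^-$ and equals $u_0(1^-)$, and dividing gives
\[\frac{\psi^{(1)}_{\beta,\delta}(r)}{u_0(1^-)} \xrightarrow{\delta\to 0} \frac{r^{1-d/2} J_{d/2-1}(r\sqrt{\beta})}{J_{d/2-1}(\sqrt{\beta})}\]
pointwise on $[0,1)$ by continuous dependence of the Bessel profile on $\mu_0 \to \beta$. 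The subsequent limit $\beta \to \mu_{\Ball}^-$ substitutes $\sqrt{\mu_{\Ball}}$ for $\sqrt{\beta}$, which by the definition of $\optimal{x}$ equals $\optimal{r}$.

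The principal obstacle is the sign analysis around the pole $\mu = \beta$ of the transcendental eigenvalue equation: verifying that the first nontrivial branch has $\mu_0 > \beta$ (rather than $<$) is what flips the sign of $v$ on the annulus and thereby determines that the normalizing maximum is attained at $r = 1^-$ rather than on the annulus. A secondary technical point is uniformity in $l \ge 2$ to rule out spurious low eigenvalues in higher angular modes, which follows from monotonicity of the Neumann spectrum in $l$ together with standard bounds on Bessel zeros.
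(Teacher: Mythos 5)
Your proposal is correct in substance, but it follows a genuinely different route from the paper's main argument --- in fact, it is essentially the ``alternative proof'' that the paper only sketches in the remark at the end of Section 4 (Sturm--Liouville form, Bessel solutions, matching conditions at $r=1$ and $r=1+\delta$). The paper's actual proof of \Cref{explicit_goal} avoids transcendental equations entirely: it rescales the thin annulus onto $[0,1]$ via an explicit $L^2$ isometry $\Phi$ weighted by $\delta^{1/2}$, shows that the coupling term between the inner and outer components is $O(\delta^{1/2})$, and concludes by Courant--Fischer that the $\ell$-sector spectrum decouples in the limit into the Neumann spectrum of the ball in that sector together with one extra eigenvalue $\beta+\ell(\ell+d-2)$ coming from the (radially constant) annulus mode; the sign and size of the inner amplitude $c_{\beta,\delta}\sim\delta^{1/2}$ then come from the $L^2$-normalization and mean-zero constraints rather than from the pole of a secular function. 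Your route buys explicitness --- the expansion $\mu_0=\beta+O(\delta)$ with $\mu_0>\beta$, the ratio $u_0(1^+)/u_0(1^-)=\beta/(\beta-\mu_0)$, and hence the sign flip and $\delta^{-1}$ blow-up of the outer part, all fall out of one formula --- while the paper's route is more robust (it does not require solving the outer ODE asymptotically and generalizes painlessly to all $\ell$ and $k$ at once). Your key sign computation is right: since $F(\mu)<0$ for $\mu<\mu_{\Ball}$, the root must satisfy $\mu_0>\beta$, which forces $\psi^{(1)}_{\beta,\delta}(1^+)<0$ and places the boundary maximum at $r=1^-$, exactly as the paper concludes via $K_\beta=-1$, $c_{\beta,\delta}>0$.

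One imprecision worth fixing: your claim that for $\ell\ge 1$ the sector eigenvalue $\mu_\ell(\beta,\delta)$ converges to the first Neumann eigenvalue of $B_1$ in that sector, and hence that $\mu_{\Ball}\le\mu_\ell(\beta,\delta)$, is not correct in general. Repeating your own thin-annulus integration in the $\ell$-sector gives $u'(1^+)=\delta\bigl(\ell(\ell+d-2)-\mu\bigr)u(1^+)+O(\delta^2)$, so the secular equation acquires a pole at $\mu=\beta+\ell(\ell+d-2)$ and there is an additional eigenvalue branch converging to that value (the paper's annulus-constant mode). For small $\beta$ and $d=2$, $\ell=1$ this limit $\beta+d-1$ can lie strictly below $\mu_{\Ball}$, so the inequality you wrote fails. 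The proof survives because the only inequality actually needed is $\mu_0(\beta,\delta)<\mu_\ell(\beta,\delta)$, and both candidate limits for $\ell\ge1$, namely the sector Neumann eigenvalue $\ge\mu_{\Ball}>\beta$ and $\beta+\ell(\ell+d-2)\ge\beta+d-1>\beta$, strictly exceed $\beta=\lim_{\delta\to0}\mu_0(\beta,\delta)$; but you should account for the extra branch explicitly rather than assert it is absent.
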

\begin{figure}[hbt!]
    \centering
    \begin{minipage}[t]{0.49\linewidth}
        \centering
        \includegraphics[width=\linewidth]{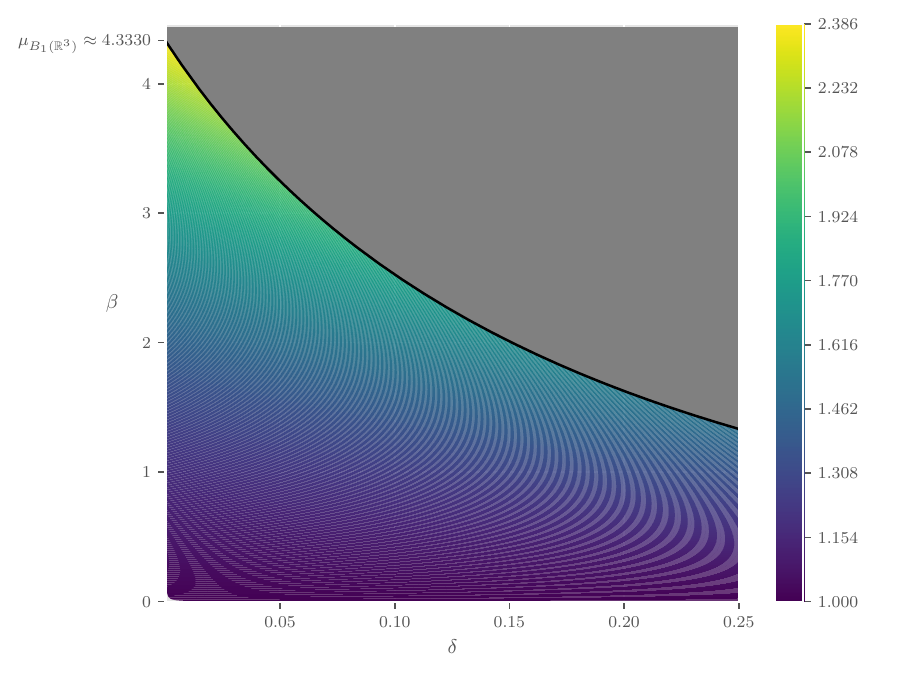}
    \end{minipage}
    \hfill
    \begin{minipage}[t]{0.49\linewidth}
        \centering
        \includegraphics[width=\linewidth]{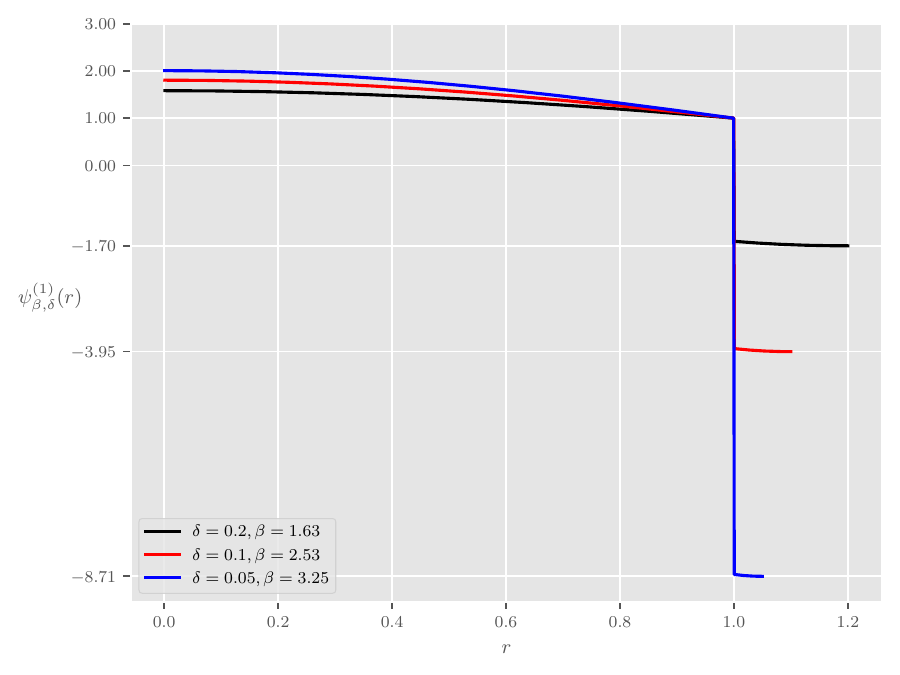}
    \end{minipage}

    \caption{(Left) Hot spots ratio in dimension 3 as a function of \(\beta\) and \(\delta\). 
    Note that the actual value of \(S_3\) is approximately \(2.3861\).
     The black curve represents the boundary of the allowed region, above which the first non-trivial Neumann eigenfunction is not radial and the effective problem yields a hot spots constant equal to 1. 
     The optimal asymptotic domain is achieved by taking \(\delta \to 0\) and \(\beta \to \mu_{B_1(\RR^3)}\).
     (Right) Examples of the radial part of the eigenfunctions 
    \(
        \frac{\psi_{\beta,\delta}^{(1)}(r)}{\psi_{\beta,\delta}^{(1)}(1^-)}
        \)
        , where \(\beta\) is taken as large as possible while keeping the first eigenfunction radial. This normalization sets the maximum on the boundary to 1, in contrast to the \(L^2\) normalization.
    As we take \(\delta \to 0\) and \(\beta \to \mu_{B_1(\RR^3)}\), 
    \(\frac{\psi_{\beta,\delta}^{(1)}(0)}{\psi_{\beta,\delta}^{(1)}(1^-)}\) will approach \(S_3\) while 
    \(
        \frac{\psi_{\beta,\delta}^{(1)}(1^+)}
        {\psi_{\beta,\delta}^{(1)}(1^-)}
        \) will tend to \(-\infty\) at a rate \(\sim-\delta^{-1}\).
        }
    \label{fig:d3hsr}
\end{figure}

\section{The effective problem}%
\label{sec:effective_problem}
Throughout this section, we fix the value of \(d\ge2\). Our aim is to understand the first nontrivial eigenfunction of the effective problem in \(L^2(B_{1+\delta}(\mathbb R^d))\), which is derived from the bilinear form \eqref{eq:effective_bilinear}.

By applying the reductions in Section~\ref{sec:neumann_sieve} and optimizing over the parameters \(\beta,\delta>0\), we will obtain the sharp lower bound for the hot spots ratio.

The bilinear form \(D_{\beta,\delta}(f,g)\) induces a self-adjoint operator and is lower bounded on its domain by the Dirichlet energy. In particular, it has compact resolvent and discrete point spectrum, with eigenfunctions \(\psi_{\beta,\delta}^{(0)},\psi_{\beta,\delta}^{(1)},\dots\), with \(\psi_{\beta,\delta}^{(0)} \equiv 1\). \Cref{prop:effective_goal} would follow if we were able to establish the following facts.

\begin{proposition}%
\label{prop:bessel_goal}
	Let \(0<\beta<\mu_{\Ball}\). Then as \(\delta\to 0\), we have:
	\begin{enumerate}
		\item\label{1} The eigenvalues \( \mu^{(1)}_{\beta, \delta}\) converge to \(\beta\).
		\item\label{2} The functions \(\psi^{(1)}_{\beta, \delta}\) are radial for all \(\delta>0\) small enough (depending on \(\beta\)).
		\item\label{3} For \(|x|<1\) (and all \(\delta>0\) small enough) the functions \(\psi^{(1)}_{\beta, \delta}\) are equal to
			\begin{equation}
    \label{eq:shape_up_to_constant}
				\psi^{(1)}_{\beta,\delta}(x) = c_{\beta,\delta} \varphi_{\Ball}\left(x \cdot \sqrt{ \mu^{(1)}_{\beta, \delta}/\lambda_{\Ball}}\right)
			\end{equation}
		for some non-zero constants \(c_{\beta,\delta}\). 
        
        \item\label{4} For \(|x|>1\) (and all \(\delta>0\) small enough) the functions \(\psi^{(1)}_{\beta, \delta}\) are negative. 
	\end{enumerate}
\end{proposition}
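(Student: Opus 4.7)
The plan is to exploit the $O(d)$-invariance of $D_{\beta,\delta}$ together with a careful analysis of the resulting radial Sturm--Liouville problems as $\delta\to 0$. Since $D_{\beta,\delta}$ commutes with $O(d)$, its eigenfunctions decompose along spherical harmonic sectors. Writing $f(x)=u(r)Y_\ell(x/|x|)$ with $Y_\ell$ a harmonic of degree $\ell$ reduces the eigenvalue problem to the one-dimensional ODE
\[
-u''-\tfrac{d-1}{r}u'+\tfrac{\ell(\ell+d-2)}{r^2}u=\mu u \qquad \text{on } (0,1)\cup(1,1+\delta),
\]
with regularity at $r=0$, Neumann at $r=1+\delta$, and a jump condition $u'(1^\pm)=\gamma\delta(u(1^+)-u(1^-))$ at $r=1$ coming from the first variation of the last term in \eqref{eq:effective_bilinear} (here $\gamma>0$ is a constant proportional to $\beta$). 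Letting $\mu_\ell(\delta)$ denote the smallest (nontrivial, when $\ell=0$) eigenvalue in the $\ell$-th sector, the proposition reduces to the two claims (a) $\mu_0(\delta)\to\beta$, and (b) $\mu_\ell(\delta)$ converges to the first Neumann eigenvalue of $B_1$ restricted to the $\ell$-th sector, which for $\ell\ge 1$ is at least $\mu_{\Ball}>\beta$.

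\paragraph{Radial sector: claims (1), (3), (4).} The unique (up to scalar) radial solution of $-\Delta u=\mu u$ on $B_1$ regular at the origin is $\varphi_{\Ball}\!\left(r\sqrt{\mu/\lambda_{\Ball}}\right)$, which is precisely formula (3). On the annulus I would Taylor expand $u$ at $r=1^+$ and use the ODE to eliminate $u''(1^+)$; the Neumann condition at $r=1+\delta$ then gives $u'(1^+)=\delta\mu\,u(1^+)+O(\delta^2)$. Substituting into the jump condition produces $\mu\,u(1^+)=\gamma(u(1^+)-u(1^-))+O(\delta)$. The orthogonality $\int_{B_{1+\delta}}\psi_{\beta,\delta}^{(1)}=0$, together with $|B_{1+\delta}\setminus B_1|=O(\delta)$ and the fact that $\int_{B_1}u$ is $O(1)$ and nonzero, forces $u(1^+)\sim-\delta^{-1}$; dividing through by $u(1^+)$ yields $\mu\to\gamma=\beta$, giving (1). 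For (4), choosing the sign so that $c_{\beta,\delta}>0$ makes $\int_{B_1}u>0$ and hence $u(1^+)<0$ by the same balance; since $u'(1^+)=O(1)$ and the annulus has width $\delta$, $u$ stays within $O(\delta)$ of $u(1^+)\sim-\delta^{-1}$ and is therefore negative throughout the annulus.

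\paragraph{Non-radial sectors: claim (2) and the main obstacle.} For $\ell\ge 1$ I would prove convergence of $\mu_\ell(\delta)$ to the first Neumann eigenvalue of $B_1$ in the $\ell$-sector by a standard Mosco/$\Gamma$-convergence argument: an upper bound comes from extending the corresponding Neumann eigenfunction of $B_1$ by its trace to the annulus, whose contribution to the Rayleigh quotient is $O(\delta)$; the lower bound comes from weak $H^1(B_1)$-compactness of normalized eigenfunctions, since the annulus volume and the jump prefactor are both $O(\delta)$, so any weak limit solves the pure Neumann eigenvalue problem on $B_1$. Since that limit is at least $\mu_{\Ball}>\beta$, for $\delta$ small one has $\mu_\ell(\delta)>\mu_0(\delta)$ for every $\ell\ge 1$, giving (2). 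The hard part will be to rule out $L^2$-mass concentration of an $\ell\ge 1$ eigenfunction on the thin annulus, which would make the weak limit on $B_1$ trivial; this requires a quantitative bound combining the $\ell$-sector Poincar\'e inequality $\int_0^1[(u')^2+\ell(\ell+d-2)/r^2 u^2]r^{d-1}dr\ge\mu_{\Ball}\int_0^1 u^2 r^{d-1}dr$ on $B_1$, the centrifugal lower bound $\int_1^{1+\delta}\ell(\ell+d-2)/r^2 u^2 r^{d-1}dr\gtrsim(d-1)\int_1^{1+\delta}u^2 r^{d-1}dr$ on the annulus, and the jump penalty $\gamma\delta(u(1^+)-u(1^-))^2$, together showing that the Rayleigh quotient of any concentrated function stays strictly above $\beta$ whenever $\beta<\mu_{\Ball}$ and $\delta$ is small.
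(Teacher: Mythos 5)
Your proposal is correct in outline but follows a genuinely different route from the paper. The paper proves this proposition (via its Proposition~\ref{explicit_goal}) by introducing an explicit $L^2$ isometry that rescales the annulus $[1,1+\delta]$ to $[0,1]$ and multiplies the outer component by $\delta^{1/2}$; in the rescaled variables the inner and outer parts decouple up to an $O(\delta^{1/2})$ cross term, so the spectrum of each harmonic sector converges to the union of the Neumann spectrum of the ball in that sector and the single ``annulus'' eigenvalue $\beta+\ell(\ell+d-2)$. Your route is instead the Sturm--Liouville/ODE-matching analysis, which the paper itself flags in the remark closing Section~\ref{sec:effective_problem} as an alternative proof; it is more hands-on and gives the $u(1^+)\sim-\delta^{-1}$ asymptotics (in your normalization) directly, at the cost of requiring a priori control on the eigenfunction before the expansion is justified.

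Two points need repair. First, in the non-radial sectors your stated limit is wrong: $\mu_\ell(\delta)$ does \emph{not} in general converge to the first Neumann eigenvalue of $B_1$ in the $\ell$-sector, and $L^2$-concentration on the annulus cannot be ruled out --- it genuinely occurs, producing the limit eigenvalue $\beta+\ell(\ell+d-2)$ (e.g.\ for $\ell=1$ and $\beta<\mu_{\Ball}-(d-1)$ this is strictly below $\mu_{\Ball}$). What saves claim~\emph{\ref{2}} is only that \emph{both} candidate limits, $h^{(0)}_{\ell,\beta,0}\ge\mu_{\Ball}$ and $\beta+\ell(\ell+d-2)\ge\beta+(d-1)$, exceed $\beta$. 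Your final sentence contains the right ingredients (jump penalty plus centrifugal term bound concentrated modes away from $\beta$), but the argument should be reorganized as a dichotomy/splitting estimate showing every $\ell\ge1$ Rayleigh quotient stays above $\beta+c$ for small $\delta$, not as an exclusion of concentration. Second, the radial-sector derivation is circular as written: you use that $\int_{B_1}u$ is nonzero and $O(1)$ and that $u(1^+)\sim-\delta^{-1}$ to conclude $\mu\to\beta$, but these are properties of the eigenfunction you have not yet established. You should first get $h^{(1)}_{0,\beta,\delta}\le\beta+o(1)$ from a piecewise-constant trial function, then rule out $c_{\beta,\delta}=0$ (a mean-zero function supported on the annulus has Rayleigh quotient $\gtrsim\delta^{-2}$ by the one-dimensional Poincar\'e inequality), and only then run the Taylor expansion. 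With these adjustments the approach goes through.
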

Notice that statement \emph{\ref{3}.}~follows immediately from \emph{\ref{2}.}~and \emph{\ref{4}.}. Since \(\psi^{(1)}_{\beta,\delta}\) is the radial Laplace eigenfunction of eigenvalue \(\mu^{(1)}_{\beta,\delta}\) for \(|x|<1\), the identity~\eqref{eq:shape_up_to_constant} follows (with \(c_{\beta,\delta}\)~potentially zero). The function \( \varphi_{\Ball}\left(x \cdot \sqrt{ \mu^{(1)}_{\beta, \delta}/\lambda_{\Ball}}\right )\) is nonnegative. Since \(\psi^{(1)}_{\beta,\delta}\) is negative for \(|x|>1\) and it must have mean zero, \(c_{\beta,\delta}\) must be positive.

\subsection{Splitting eigenfunctions by spherical harmonics}

The bilinear form \(D_{\beta,\delta}\) is invariant under rotations. In particular, there is a basis of eigenfunctions that splits (in polar coordinates) into radial functions times a spherical harmonic. The possible eigenvalues of the spherical harmonics are \(\ell(\ell+d-2)\), for integers \(l\ge 0\). For \( \beta,\delta\ge 0\), this splits the problem into a family of radial problems in one dimension, in \(L^2([0,1+\delta],  x^{d-1} dx)\), with associated forms
\begin{equation*}\label{eq:Hform}
	H_{\ell,\beta,\delta}(f,f) = H_{\beta,\delta}^0(f,f)  + \ell(\ell+d-2)\int_0^{1+\delta} |f(x)|^2 x^{d-3} dx, \hspace{5mm}l>0,
\end{equation*}
where
\begin{equation*}
	H_{\beta,\delta}^0(f,f) = \int_0^{1} |f'(x)|^2 x^{d-1} dx + \beta\delta |f(1^-)-f(1^+)|^2 + \int_1^{1+\delta} |f'(x)|^2 x^{d-1} dx.
\end{equation*}

Note that \(H^{0}_{\beta,\delta}= H_{0,\beta,\delta}\). When \(\delta = 0\), with an abuse of notation, we will define \(H_{\beta,0}^0(f,f):= \int_0^{1} |f'(x)|^2 x^{d-1} dx\) and \(H_{\ell ,\beta,0}\) accordingly. Note that \(H_{\ell ,\beta,0}\) does not depend on $\beta$.

The eigenfunctions of \(D_{\beta,\delta}\) are the products of a spherical harmonic of eigenvalue \(\ell (\ell +d - 2)\) (in the polar coordinate) and an eigenfunction of \(H_{\ell, \beta, \delta}\) (in the radial coordinate), with eigenvalue the corresponding eigenvalue of \(H_{\ell, \beta, \delta}\). We will denote by \(h^{(k)}_{\ell, \beta, \delta}\) the \(k\)-th eigenvalue of \(H_{\ell, \beta, \delta}\). The eigenvalues of \(D_{\beta,\delta}\) are then the (ordered) union of the eigenvalues of \(H_{\ell, \beta, \delta}\) over all \(\ell\), and the remaining question is which of the \(H_{\ell,\beta,\delta}\) gives rise to the first nontrivial eigenvalue of \(D_{\beta,\delta}\).

The constant function is a radial eigenfunction, showing that \(\mu^{(0)}_{\beta,\delta} = h^{(0)}_{0,\beta,\delta} = 0\). Since \(H_{\ell, \beta,\delta}\preceq H_{\ell+1,\beta,\delta}\) in the positive semidefinite ordering, we have \(h^{(0)}_{1,\beta,\delta}\le h^{(0)}_{\ell,\beta,\delta}\) for all \(\ell \ge 1\). In particular, there are only two possibilities for \(\mu^{(1)}_{\beta,\delta}\).

\begin{itemize}
    \item One possibility is that \(\mu_{\beta,\delta}^{(1)} = h_{0,\beta,\delta}^{(1)}\), i.e., the first non-zero eigenvalue of \(H_{0,\beta,\delta}(f,f) \) is the first non-zero eigenvalue of \(D_{\beta,\delta}\). In this case, the first eigenfunction of \(D_{\beta,\delta}\) is radial. 
    \item The alternative possibility is that \(\mu_{\beta,\delta}^{(1)} = h_{1,\beta,\delta}^{(0)}\). In this case, the first nontrivial eigenfunction of  \(D_{\beta,\delta}\) is not radially symmetric. Experiments show that the hot spots ratio is equal to \(1\) in this case, and therefore it is beyond our interest.
\end{itemize}

Proving \Cref{prop:bessel_goal} entails showing that we are in the first (radial) case. This will happen as long as \(h_{0, \beta, \delta}^{(1)}\le h_{1, \beta, \delta}^{(0)}\). We will first fix \(\beta<\mu_\Ball\) and send \(\delta\to 0\).

\subsection{Taking the limit \texorpdfstring{\(\delta\to 0\)}{}}

The splitting into spherical harmonics reduces \Cref{prop:bessel_goal} to the following:

\begin{proposition}%
\label{explicit_goal}
	Fix \(0<\beta<\mu_\Ball\). The following statements hold: 
    \begin{enumerate}
        \item We have 
        \begin{equation*}
	\lim_{\delta \to 0} h_{0, \beta,\delta}^{(1)} = \beta < \mu_\Ball = \lim_{\delta \to 0} h_{1, \beta, \delta}^{(0)}.
\end{equation*}
In particular, for \(\delta>0\) small enough (depending on \(\beta)\) one has \(h_{0, \beta,\delta}^{(1)}< h_{1, \beta, \delta}^{(0)}\).
        \item For \(\delta>0\) small enough, the eigenfunction corresponding to the eigenvalue \(h^{(1)}_{0,\beta,\delta }\) has constant (and opposite) signs in \([0,1)\) and in \((1, 1+\delta]\).
    \end{enumerate}
\end{proposition}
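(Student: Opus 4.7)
The plan is to establish statement~(1) by two variational limit computations and to deduce statement~(2) from an ODE analysis on either side of $x=1$.

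For the first limit $\lim_{\delta\to 0} h^{(1)}_{0,\beta,\delta} = \beta$, the upper bound comes from the piecewise-constant test function $f \equiv a$ on $[0,1]$ and $f\equiv b$ on $[1,1+\delta]$, with $a/d + b\delta_* = 0$ enforcing $f\perp 1$ (here $\delta_* := ((1+\delta)^d-1)/d$): the Rayleigh quotient equals $\beta\delta(1+d\delta_*)/\delta_*$ and converges to $\beta$. For the lower bound I would take arbitrary $f\perp 1$, decompose it as $a+(f-a)$ on $[0,1]$ and $b+(f-b)$ on $[1,1+\delta]$ with $a,b$ the weighted means (linked by $a=-bd\delta_*$), and combine (i) weighted Poincaré on the ball, $\|f-a\|_{L^2}^2 \le E_1/C_d$ with $C_d \ge \mu_\Ball > \beta$; (ii) 1D Poincaré on the thin annulus, $\|f-b\|_{L^2}^2 = O(\delta^2)E_2$; and (iii) 1D trace bounds $|f(1^-)-a|^2 \le E_1/(d+2)$, $|f(1^+)-b|^2 = O(\delta)E_2$. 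Splitting the jump term by Young's inequality as $|f(1^-)-f(1^+)|^2 \ge (1-\eta)(a-b)^2 - \eta^{-1}[\text{fluctuations}]^2$ with $\eta$ of the same order as the target error $\epsilon$ then yields $H_{\beta,\delta}^0(f,f) \ge \beta(1-\epsilon)\|f\|_{L^2}^2$ for all $\delta$ small enough: the gap $\beta<\mu_\Ball$ absorbs the cross-term on $E_1$, and the asymptotics $\delta_*/\delta\to 1$ keep the $(a-b)^2\sim b^2$ term dominating $a^2/d + b^2\delta_*\sim b^2\delta$ with the right constant.

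For the second limit $\lim_{\delta\to 0} h^{(0)}_{1,\beta,\delta} = \mu_\Ball$, the upper bound comes from extending the radial profile $g$ of the first $\ell=1$ Neumann eigenfunction of $\Ball$ by the constant $g(1)$ on $[1,1+\delta]$: both $E_2$ and the jump vanish, and the added $L^2$ and potential contributions are $O(\delta)$, giving the Rayleigh quotient $\mu_\Ball+O(\delta)$. For the lower bound, normalized minimizers $f_\delta$ have uniformly bounded $H^1$ norm on $[0,1+\delta]$; the 1D embedding $H^1\hookrightarrow L^\infty$ applied on $[1/2,1+\delta]$ (where the weight $x^{d-1}$ is bounded above and below) gives $\|f_\delta\|_{L^2([1,1+\delta])}^2 = O(\delta)\to 0$, so a weak $H^1$-subsequential limit $f$ on $[0,1]$ has $\|f\|_{L^2}=1$. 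Weak lower semicontinuity combined with the variational characterization of $\mu_\Ball$ as the first $\ell=1$ Neumann eigenvalue of the ball then gives $\liminf_{\delta\to 0} h^{(0)}_{1,\beta,\delta}\ge\mu_\Ball$.

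For statement~(2), the eigenfunction $f_\delta$ solves the radial Laplace equation with eigenvalue $h = h^{(1)}_{0,\beta,\delta}<\lambda_\Ball$ on each of $(0,1)$ and $(1,1+\delta)$. On $[0,1]$ the unique solution bounded at $0$ is (up to scalar) $\varphi_\Ball(x\sqrt{h/\lambda_\Ball})$, which is strictly positive for $h<\lambda_\Ball$; the trivial scalar is ruled out because otherwise $f_\delta$ would be a first Dirichlet--Neumann eigenfunction on the annulus $[1,1+\delta]$, whose first eigenvalue is $\gg 1/\delta^2$. Any zero of $f_\delta$ in $(1,1+\delta)$ would similarly force $f_\delta$ to be a first Dirichlet--Neumann eigenfunction on a subannulus, again with eigenvalue $\gg 1/\delta^2$. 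Hence $f_\delta$ has constant sign on each side of $x=1$, and opposite signs follow from orthogonality to constants. The delicate step is the quantitative lower bound on $h^{(1)}_{0,\beta,\delta}$: the three ingredients must be combined so that the $\beta<\mu_\Ball$ gap absorbs the trace cross-term while the $\delta_*/\delta\to1$ asymptotics keep the $(a-b)^2$ vs.\ $a^2/d + b^2\delta_*$ comparison sharp.
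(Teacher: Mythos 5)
Your handling of the radial channel and of part (2) is sound, and it is a genuinely more elementary route than the paper's: the paper conjugates each $H_{\ell,\beta,\delta}$ by an explicit $L^2$ isometry that rescales the annulus to a unit interval and reads off a decoupled limit spectrum, whereas you argue directly on the Rayleigh quotient. Your mean/fluctuation decomposition, with the weighted Neumann--Poincar\'e constant $\ge \mu_\Ball>\beta$ on the ball and the trace bound $|f(1^-)-a|^2\le E_1/(d+2)$, does close the lower bound $\lim_{\delta\to0}h^{(1)}_{0,\beta,\delta}=\beta$, and the sign analysis in part (2) is correct.

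There is, however, a genuine gap in your lower bound for $h^{(0)}_{1,\beta,\delta}$. The step ``$H^1\hookrightarrow L^\infty$ on $[1/2,1+\delta]$ gives $\|f_\delta\|^2_{L^2([1,1+\delta])}=O(\delta)$'' is invalid: the form domain allows a jump at $x=1$, so the embedding cannot be applied across that point, and on $[1,1+\delta]$ alone its constant degenerates like $\delta^{-1}$, making the estimate circular. Indeed the claimed limit is false in general: the admissible trial function $f\equiv0$ on $[0,1]$, $f\equiv c$ on $[1,1+\delta]$ with $c^2\approx\delta^{-1}$ (no mean-zero constraint is imposed in the $\ell=1$ channel) has $H_{1,\beta,\delta}(f,f)=\beta\delta c^2+(d-1)c^2\int_1^{1+\delta}x^{d-3}\,dx=\beta+(d-1)+o(1)$, so $\limsup_{\delta\to0}h^{(0)}_{1,\beta,\delta}\le\min\bigl(\mu_\Ball,\ \beta+d-1\bigr)$, which is strictly below $\mu_\Ball$ whenever $\beta<\mu_\Ball-(d-1)$ (a nonempty range, e.g.\ $d=2$ where $\mu_{B_1(\RR^2)}\approx3.39$). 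This is consistent with the paper's own decoupling computation, which records the extra limiting eigenvalue $\beta+\ell(\ell+d-2)$ in the $\ell$-th channel alongside the spectrum of $H_{\ell,\beta,0}$. The conclusion you actually need survives, since $\beta<\beta+d-1$ always, hence $h^{(1)}_{0,\beta,\delta}<h^{(0)}_{1,\beta,\delta}$ for small $\delta$; but as written your proof of the displayed equality $\lim_{\delta\to0}h^{(0)}_{1,\beta,\delta}=\mu_\Ball$ does not go through, and the correct limit is $\min(\mu_\Ball,\beta+d-1)$.
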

To understand the scheme of the proof of \Cref{explicit_goal},  we emphasize the following intuition coming from the simulations in \Cref{fig:d3hsr}:
\begin{enumerate}
    \item The regions $r<1$ and $r\in (1, 1+\delta)$ essentially decouple from each other.
    \item For $r \in [1,1+\delta)$, the first eigenfunction takes the value $\approx -\delta^{-1/2}$. In particular, most of the $L^2$ mass concentrates in this region.
    \item For $r \in [0,1)$, the first eigenfunction is still proportional to the radial part of a Laplace eigenfunction, with proportionality constant going to zero at a rate of $\sim \delta^{1/2}$. 
\end{enumerate}
This motivates a change of variables (in the form of an $L^2$ isometry) that \emph{focuses} on the $(1, 1+\delta)$ region.

\begin{proof}[Proof of \Cref{explicit_goal}]
We define the isometry 
\begin{equation*}
    \Phi: L^2({[0,1+\delta]}, r^{d-1}dr)\to L^2({[0,1]}, r^{d-1 }dr)\oplus L^2({[0,1]},dx)
\end{equation*}  by 
\begin{equation*}
    \Phi: f\mapsto \left(
    f|_{[0,1]},
    C_{\delta}\delta^{1/2} f(T_\delta(x))
\right),\quad
\Phi^{-1}(f,g)(r)=\begin{cases}
f(r) & r\in[0,1),\\
C_{\delta}^{-1}\delta^{-1/2}g\left(T_\delta^{-1}(r)\right) & r\in[1,1+\delta],
\end{cases}
\end{equation*}
where
\begin{equation*}
    T_\delta(x) = \left(1+d\delta C_{\delta}^2 x\right)^{1/d}
    ,\quad C_{\delta}=\sqrt{\frac{\left(1+\delta\right)^{d}-1}{d\delta}}.
\end{equation*}
We note that \(T_\delta(x) = 1 + \delta x + O(\delta^2)\) and \(C_{\delta} = 1 + o(1)\). 
The above isometry breaks \([0,1+\delta]\) into an inner function and an outer function, and induces a new bilinear form 
\(\tilde H_{\ell, \beta, \delta}(\Phi(\cdot), \Phi(\cdot)) = H_{\ell, \beta, \delta}(\cdot,\cdot)\) 
in 
\(L^2({[0,1]}, r^{d-1 }dr)\oplus L^2({[0,1]},dx)\), given by:
\begin{equation*}
\begin{aligned}
   	\tilde H_{\ell,\beta,\delta}
        ((f, g),(f, g )) 
    = 
    &\quad H_{\ell,1,0}(f,f) 
    +\beta |C_\delta\delta^{1/2} f(1) - g(0)|^2
    \\
    &+ \ell(\ell+d-2)\int_0^1 v_{\delta}(x) |g(x)|^2dx
    \\
    &+\delta^{-2}\int_0^1 w_{\delta}(x) |g'(x)|^2dx,
\end{aligned}
\end{equation*}
where
\begin{equation*}
     v_\delta(x)=\frac{T_\delta(x)^{d-3}T_\delta'(x)}{C_{\delta}^{2}\delta},\quad 
     w_\delta(x)=\frac{\delta^2}{T'_\delta(x)^{2}}.
\end{equation*}
Notably, \(v_\delta\) and \(w_\delta\) converge in \(C^\infty\) to the constant function \(1\). 
For any sequence \(\delta_n \to 0\) and any sequence \((f_n, g_n)\) such that 
\(\tilde H_{\ell, \beta, \delta_n}((f_n, g_n),(f_n, g_n))\) is uniformly bounded, 
we must have \(\int_0^1 |g_n'(x)|^2 dx \to 0\), and thus \(\|g_n-\overline g_n\|_\infty \to 0\), where \(\overline g_n\) denotes the mean of \(g_n\).

Let \((\phi^{(k)}_{\ell,\beta,\delta,}, \gamma^{(k)}_{\ell,\beta,\delta})\) be the \(k\)-th
\(L^2\)-normalized eigenfunction associated to
\(\tilde H_{\ell,\beta,\delta}\) with eigenvalue \(h^{(k)}_{\ell ,\beta, \delta}\) (in other words,
\((\phi^{(k)}_{\ell,\beta,\delta}, \gamma^{(k)}_{\ell,\beta,\delta}) = \Phi (\psi^{(k)}_{\ell,\beta,\delta})\), where \(\psi^{(k)}_{\ell,\beta,\delta}\) is the corresponding eigenfunction of the original problem). 
Note that the index \(k = 0,1,\ldots\) is for each fixed harmonic $\ell$.
The function \(\psi^{(0)}_{\ell,\beta,\delta}\) will always be constant sign, and will be constant when $\ell=0$. 

Applying Courant-Fischer and restricting the outer portion of the trial function $g$ to be constant (or even zero), 
we see that \(h^{(k)}_{\ell,\beta,\delta}\le h^{(k)}_{\ell,\beta,0} +O(1)\). 
The functions \(\gamma^{(k)}_{\ell,\beta,\delta}\) are also uniformly bounded in \(H^1\), and in particular have a convergent subsequence in 
\(H^{1-\epsilon}\) and \(L^\infty\) by Sobolev embedding, so converge (at least subsequentially) 
to a constant. 
The only interaction between \(\phi^{(k)}_{\ell,\beta,\delta}\) and \( \gamma^{(k)}_{\ell,\beta,\delta}\) is
given by 
\(2 \beta  C_\delta \delta^{1/2} \cdot \gamma^{(k)}_{\ell,\beta,\delta}(0) \cdot \phi^{(k)}_{\ell,\beta,\delta}(1)\), which goes to zero. 
In particular, as \(\delta\to 0\) the spectrum of each \(H_{\ell,\beta,\delta}\) decouples into the union of the spectrum of \(H_{\ell, \beta,0}\) (which we recall is independent of $\beta$) and the single eigenvalue \(\beta + \ell(\ell+d-2)\) coming from $\gamma$ being a constant function.

In the limit as $\delta \to 0$, the first non-zero eigenvalue of $D_{\beta,\delta}$ will come from the smallest of four possibilities:  $\beta$, $h^{(1)}_{0,\beta,0}$ (the two possibilities if the minimum is attained at $\ell = 0$), and $h^{(0)}_{1,\beta,0}$, $\beta + 1(1+d-2)$ (if the minimum is attained for $\ell=1$, in the non-radial case). 
Hence, 
we see that as long as \(\beta< h^{(0)}_{1,\beta,0} = \mu_{\Ball}\), the first eigenfunction will be in the subspace of radial functions for \(\delta>0\) small enough.

When \(\ell = 0\), 
{%
\renewcommand{\ell}{0}
\((\phi^{(0)}_{0,\beta,\delta}, \gamma^{(0)}_{0,\beta,\delta}) = c\cdot \Phi(1) = c\cdot (1,C_\delta\sqrt{\delta})\), where 
$c$ is a constant for $L^2$ normalization.
We require orthogonality for other eigenfunctions, meaning that for \(k\geq1\),
\begin{equation*}
    \int_0^1 \phi^{(k)}_{\ell,\beta,\delta}(r)r^{d-1}dr +  
    C_\delta\delta^{1/2}\int_0^1 \gamma^{(k)}_{\ell,\beta,\delta}(x)dx = 0,
\end{equation*}
which is equivalent to the mean zero condition for Neumann eigenfunctions. 
The functions \(\phi^{(k)}_{\ell,\beta,\delta}\) are the radial part of radially symmetric Laplace eigenfunctions.
Taking \(k=1\), the uniqueness of 
radial solutions to the Helmholtz equation implies that
 \(\phi^{(1)}_{\ell,\beta,\delta}\)
must be equal to 
\newcommand{\eigenvalue}{h^{(1)}_{0,\beta,\delta}}
\[ c_{\beta,\delta}\varphi_{\Ball}\left(x \cdot \sqrt{ \eigenvalue /\lambda_{\Ball}}\right).\] 
Since \(\eigenvalue < \mu_{\Ball}< \lambda_{\Ball}\) holds for small enough $\delta>0$, 
 the function  \(\phi^{(1)}_{\ell,\beta,\delta}\) will also be constant-sign (or exactly zero if \(c_{\beta,\delta}\) was zero). 
We now take the limit \(\delta \to 0\).  By the previous argument and up to subsequential limits,
\(\gamma^{(1)}_{0,\beta,\delta}\) must uniformly approach a constant \(K_{\beta}:=K_{0,\beta}^{(1)}\) (which we will assume to be $\le 0$ by the sign symmetry of eigenfunctions)
and by the explicit representation of the interior part, there exist constants $M_\beta, Q_\beta$  such that
\begin{equation*}\label{eq:mean_estimate}
    \int_0^1 \varphi_{\Ball}\left(r \cdot \sqrt{ \eigenvalue/\lambda_{\Ball}}\right)r^{d-1}dr = M_\beta+ o(1), \quad M_\beta>0,
\end{equation*}
and 
\begin{equation*}
    \int_0^1 \varphi_{\Ball}\left(r\cdot \sqrt{ \eigenvalue/\lambda_{\Ball}}\right)^2r^{d-1}dr = Q_\beta + o(1),\quad Q_\beta>0.
\end{equation*}
From orthogonality and the \(L^2\) normalization, we have
\begin{equation*}\label{eq:asy_constraints}
    c_{\beta,\delta} (M_\beta+o(1)) + \delta^{1/2} (K_\beta+o(1)) = 0, \quad
    c^2_{\beta,\delta} (Q_\beta+o(1)) + (K_\beta+o(1))^2 = 1.
\end{equation*}
Solving these equations, we obtain 
\begin{equation*}
    c_{\beta,\delta} = -\delta^{1/2}\frac{K_\beta}{M_\beta}(1 + o(1))
    \quad
    \delta\frac{K_\beta^2}{M_\beta^2} Q_\beta + K_\beta^2 = 1+o(1),
\end{equation*}
and deduce that \(K_\beta = -1\) and \(c_{\beta,\delta} >0 \) for $\delta>0$ small enough, with limits independent of the subsequence that we take. 
This establishes the second point in \Cref{explicit_goal}.
%%end l=0
}
\end{proof}

\begin{remark}
    The effective radial problems can be analyzed more explicitly as Sturm-Liouville eigenvalue problems with an interior jump condition. The Euler-Lagrange equations associated to  
    \begin{align*}
        H_{\ell,\beta,\delta}(f,f) - \mu \int_0^{1+\delta}f(r)^2r^{d-1}dr
    \end{align*}
    are
    \begin{align*}
		  f'(1^-) &= \beta\delta \left(f(1^+) - f(1^-)\right), \\
		f'(1^-) &= f'(1^+), \\
            f'(1+\delta) &= 0, \\
	-\frac{d}{dr}\left(r^{d-1} f'(r)\right) + \ell(\ell+d-2) f(r) 
		r^{d-3} &= h^{(k)}_{\ell,\beta,\delta}r^{d-1}f(r), \quad r\in (0,1)\cup(1,1+\delta). 
    \end{align*}
    This system of equations can be transformed into a Bessel equation, and by requiring that \(f(r) = O(1)\) as \(r \to 0\), we see that, up to scaling, 
    \begin{equation*}
    f(r)=
    \begin{cases}
        r^{1-\frac{d}{2}}J_{\frac{d}{2}+\ell-1}(r\sqrt{h^{(k)}_{\ell,\beta,\delta}}) & r<1,\\
        ar^{1-\frac{d}{2}}J_{\frac{d}{2}+\ell-1}(r\sqrt{h^{(k)}_{\ell,\beta,\delta}})+br^{1-\frac{d}{2}}Y_{\frac{d}{2}+\ell-1}(r\sqrt{h^{(k)}_{\ell,\beta,\delta}}) & r>1,
    \end{cases}
    \end{equation*}
    where the coefficients \(a,b\) are chosen to match the interior jump and right boundary conditions. This alternative strategy was used in the numerical experiments to determine optimal \(\beta,\delta\) configurations and to produce \Cref{fig:d3hsr}. An analysis of the asymptotics of these problems as \(\delta \to 0\) leads to an alternative proof of \Cref{explicit_goal}. 
\end{remark}

\section{The Neumann sieve problem}
\label{sec:neumann_sieve}
\renewcommand{\rad}{{1+\delta}}

In this section, we construct a sequence of domains that saturates the upper bound for the hot spots constant from \Cref{sec:upper}. To build such domains, we use a variant of the ``Neumann sieve'' construction. Although Neumann sieve methods have been employed several times in the literature to achieve different purposes, they are usually presented in planar geometries (which are not appropriate for us) or in very general scenarios. For our purposes, a simpler, direct version of the Neumann sieve will suffice. Compared to the sieves considered in the literature, our construction is \emph{thicker}, in the sense that the length of the channels is much larger than their size (see \Cref{fig:neumann_sieve}). This greatly simplifies the homogenization computations.

\begin{figure}[hbt!]
    \begin{center}
    \begin{minipage}{0.48\textwidth}
        \includegraphics[width=\textwidth]{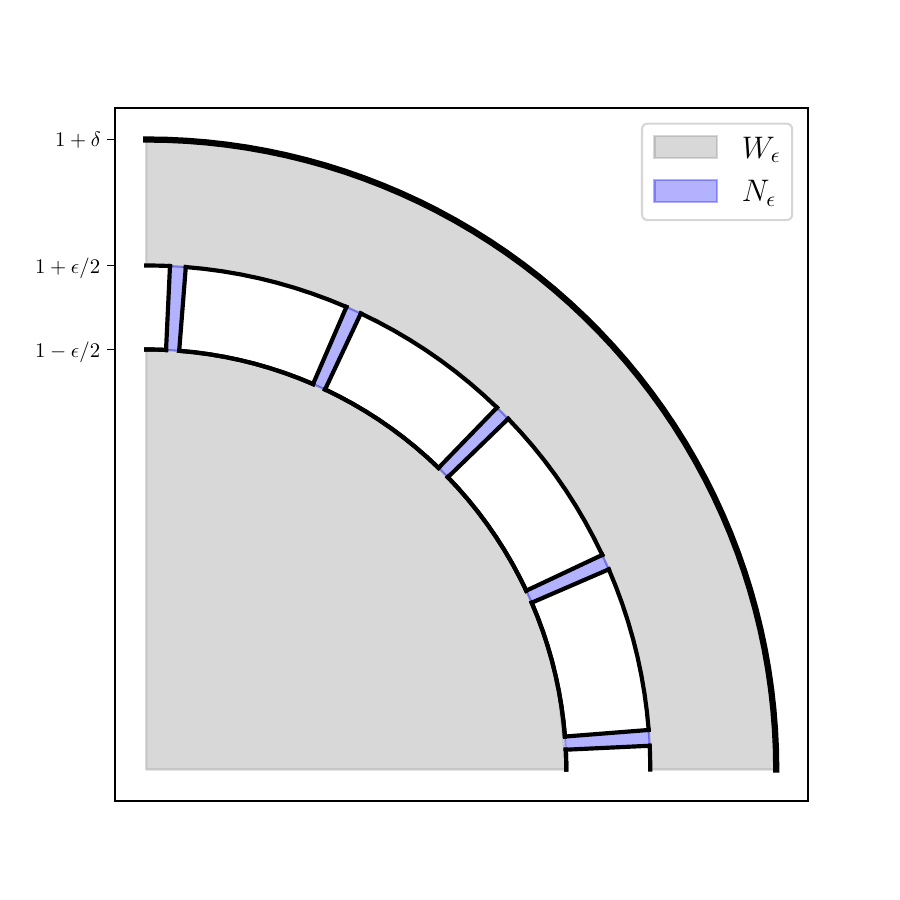}
    \end{minipage}
    \hfill
    \begin{minipage}{0.48\textwidth}
        \includegraphics[width=\textwidth]{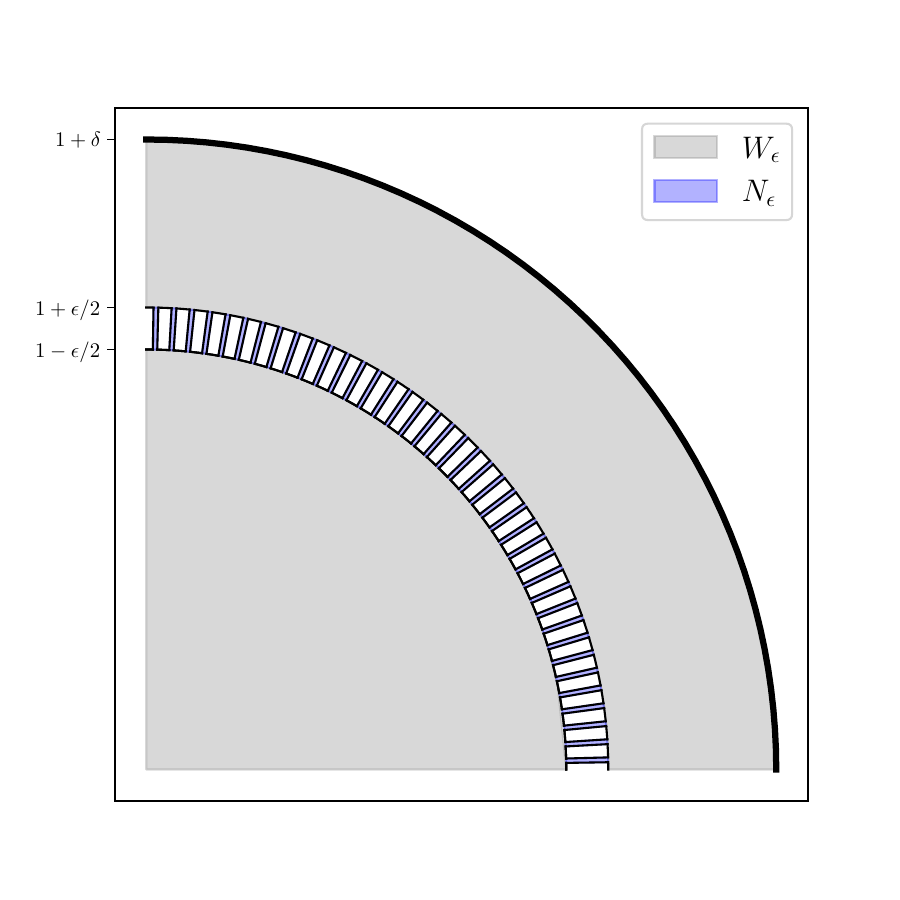}
    \end{minipage}
    \end{center}
    \caption{
        The domain \(\Omega_{\epsilon}\) consists of \(B_{\rad}:=B_{\rad}(\mathbb{R}^d)\) with the region \(\{x\in B_{\rad} \text{ s.t. } |x|\in(1-\epsilon/2,1+\epsilon/2)\}\) removed along with thin channels through that region. 
        We refer to the bulk as \(W_{\epsilon}\) and the channels as \(N_\epsilon\). 
        The width of the channels must be much smaller than their length, \(\epsilon\), being that they contain only a proportion \(\epsilon^2\alpha\) of the volume in the shell while being equidistributed on \(\SS^{d-1}\) at scales \(\sim\exp(-\frac{1}{\epsilon})\). In the case \(d=2\), this will consist of exponentially many channels that are 
        getting exponentially narrower as \(\epsilon \to 0\).
        }%
    \label{fig:neumann_sieve}%
\end{figure}

Throughout this section, we will fix \(d\ge 2\) and \(\beta, \delta>0\) such that the first nontrivial eigenfunction of \(D_{\beta,\delta}\) is unique and radial (and in particular the second eigenvalue of \(D_{\beta,\delta}\) is strictly larger than the first). We will drop the dependencies on these parameters and let \(\alpha:= \beta \delta\) be the \emph{connectivity parameter} of the Neumann sieve, i.e., the factor in front of the last summand in~\eqref{eq:effective_bilinear}.

\begin{definition}[Spherical sieve]\label{Def: Sieve description}
    Let \(0<\epsilon<\alpha^{-1/2}\). A set \(S_\epsilon \subset \SS^{d-1}\) with smooth boundary is an \emph{\(\epsilon\)-spherical sieve} if for any ball \(B_{e^{-1/\epsilon}}(x_0)\) of radius \(e^{-1/\epsilon}\) in \(\SS^{d-1}\), we have that
        \begin{equation}%
            \label{eq:sieve_error}%
           \left | \epsilon^{-2} \frac{|S_\epsilon \cap B_{e^{-1/\epsilon}}(x_0)|}{ |B_{e^{-1/\epsilon}}(x_0)|}  - {\alpha} \right|\le  e^{-1/\epsilon}.
        \end{equation}
\end{definition}
In other words, \(\epsilon\)-spherical sieves contain \(\epsilon^2\alpha\) of the volume of \(\SS^{d-1}\) and are uniformly distributed in \(\SS^{d-1}\) up to exponentially small scales \(\exp(-1/\epsilon)\) with similarly exponentially small error. 
Note that a probabilistic argument (splitting the ball into roughly \(\exp(-2/\epsilon)\) pieces, keeping each of them with probability \(\epsilon^2 \alpha\), and smoothing the result) guarantees the existence of \(\epsilon\)-spherical sieves. 
\begin{definition}[Thick sieve]\label{Def: Thick sieve description}
    A set \(\Omega_\epsilon \subset B_\rad\), with \(0<\epsilon<\delta\), is an \emph{\(\epsilon\)-thick sieve} if

    \begin{enumerate}
        \item \(\Omega_\epsilon\) is a closed domain with smooth boundary.
        \item \(B_\rad\supset \Omega_\epsilon \supset\{x\in B_\rad(\mathbb R^d) \text{ s.t. } |x|\not \in [1-\epsilon/2, 1+\epsilon/2] \}\).
        \item The domain \(B_\rad\setminus \Omega_{\epsilon}\) is \emph{graphical} over \(\SS^{d-1}\): For every \(e\in \SS^{d-1}\), the set \(\{r\ge0 \text{ s.t. } r \cdot e \in B_\rad\setminus \Omega_{\epsilon}\}\) is an interval.
        \item There exists a spherical sieve \(S_\epsilon\) approximating \(\Omega_\epsilon\), in the sense that
        \[
        \{ x\in \Omega_\epsilon \text{ s.t. } |x| \in [1-\epsilon/2+\epsilon^2, 1+\epsilon/2-\epsilon^2]\} = 
        \{e\cdot r \text{ s.t. } e\in S_\epsilon, \ r \in [1-\epsilon/2+\epsilon^2, 1+\epsilon/2-\epsilon^2]\}.
        \]
    \end{enumerate}
    The set \(\{x\in \Omega_\epsilon \text{ s.t. } |x|\in[1-\epsilon/2, 1+\epsilon/2]\}\) will be denoted by \(N_\epsilon\) (the ``necks'' of the sieve) and its complement (which contains all but \(O(\epsilon)\) of the mass in \(\Omega_\epsilon\)) will be denoted by \(W_\epsilon\).
\end{definition}
Thick sieves can be constructed from spherical sieves using an approximation argument by smoothing the set 
\[
\{x \in B_\rad \text{ s.t. } |x|\not \in [1-\epsilon/2,1+\epsilon/2] \text{ or } x/|x|\in S_{\epsilon}\}.
\]

\subsection{Estimating the eigenvalues of \texorpdfstring{\(\Omega_\epsilon\)}{}}
We now show that the eigenvalues of \(\Omega_\epsilon\) approach the eigenvalues of \(D_{\beta,\delta}\) as \(\epsilon\) goes to zero, at a rate that depends only on \(\beta\), \(\delta\) and \(d\).
\begin{proposition}\label{lem:eigenvalues_converge}
    We have
    \(\lim_{\epsilon \to 0} \mu_{\Omega_{\epsilon}}^{(k)}= \mu_{\beta,\delta}^{(k)}\),
    uniformly over the possible choices of thick Neumann sieves \(\Omega_\epsilon\), with a rate that depends only on \(\beta,\delta,d,k\).
\end{proposition}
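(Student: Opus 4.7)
The plan is to use the Courant--Fischer min-max principle for both eigenvalue problems and establish matching upper and lower bounds. For each fixed $k$, I would separately show $\limsup_{\epsilon\to 0}\mu^{(k)}_{\Omega_\epsilon}\le \mu^{(k)}_{\beta,\delta}$ via explicit test functions built from the effective eigenfunctions, and $\liminf_{\epsilon\to 0}\mu^{(k)}_{\Omega_\epsilon}\ge \mu^{(k)}_{\beta,\delta}$ via weak compactness and a lower-semicontinuity argument. The uniformity over the choice of thick sieve will follow from keeping all constants explicit and invoking only the quantitative equidistribution in \Cref{Def: Sieve description}.

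For the upper bound, take the first $k+1$ eigenfunctions $\psi^{(0)},\ldots,\psi^{(k)}$ of $D_{\beta,\delta}$; these are smooth on each side of $\SS^{d-1}$ with well-defined one-sided traces. For each $j$ construct a trial function $\tilde\psi_\epsilon^{(j)}\in H^1(\Omega_\epsilon)$ by restricting $\psi^{(j)}$ to $W_\epsilon$ and extending through each channel of $N_\epsilon$ by radial linear interpolation between the traces $\psi^{(j)}(1^-\cdot e)$ and $\psi^{(j)}(1^+\cdot e)$, smoothed to match the bulk values at the channel ends. One verifies approximate $L^2$-orthonormality of the family $\{\tilde\psi_\epsilon^{(j)}\}$ and the matching $\int_{\Omega_\epsilon}|\nabla \tilde\psi_\epsilon^{(j)}|^2\to D_{\beta,\delta}(\psi^{(j)},\psi^{(j)})$ of Dirichlet energies. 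The nontrivial step is the channel contribution: writing the sum over channels as a Riemann-type sum weighted by the local sieve density and applying the equidistribution of \Cref{Def: Sieve description} at scale $e^{-1/\epsilon}$ converts this sum into an integral against surface measure on $\SS^{d-1}$, recovering the jump term $\beta\delta\fint_{\SS^{d-1}}|\psi^{(j)}(1^+ e)-\psi^{(j)}(1^- e)|^2\,de$ up to an $o(1)$ error controlled by the smoothness of the traces. Courant--Fischer then yields the upper bound.

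For the lower bound, let $\phi^{(0)}_\epsilon,\ldots,\phi^{(k)}_\epsilon$ be the first $k+1$ $L^2$-normalized Neumann eigenfunctions on $\Omega_\epsilon$. By the upper bound their Dirichlet energies are uniformly bounded. Using bounded $H^1$-extension operators on each side of the shell $\{|x|\in[1-\epsilon/2,1+\epsilon/2]\}$, whose norms are controlled by the graphical structure of $\Omega_\epsilon$ from \Cref{Def: Thick sieve description}, pass to a subsequence along which each $\phi^{(j)}_\epsilon$ converges weakly in $H^1$ and strongly in $L^2$ on each side to a limit $\phi^{(j)}\in H^1(B_{1+\delta}\setminus\SS^{d-1})$. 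Weak lower semicontinuity handles the Dirichlet-energy parts of $D_{\beta,\delta}$; the new ingredient is lower semicontinuity of the jump term. This follows from a per-channel analysis: for fixed one-sided traces, the minimum Dirichlet energy in a channel is attained by the radial linear profile, yielding a matching lower bound per channel, which aggregates via the equidistribution of the sieve into $\beta\delta\fint_{\SS^{d-1}}|\phi^{(j)}(1^+ e)-\phi^{(j)}(1^- e)|^2\,de$. Orthonormality of $\{\phi^{(j)}\}$ is preserved by the $L^2$ convergence, and the min-max characterization of $D_{\beta,\delta}$ then yields $\mu^{(k)}_{\beta,\delta}\le \liminf_\epsilon \mu^{(k)}_{\Omega_\epsilon}$.

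The main obstacle is the lower semicontinuity of the jump term, where the combinatorics of the sieve enter crucially. The subtlety is that the interior-side traces of $\phi^{(j)}_\epsilon$ near $\SS^{d-1}$ are only defined on the bulk $W_\epsilon$, and one must show that these traces stabilize in $L^2(\SS^{d-1})$ to the one-sided trace of the limit $\phi^{(j)}$. Equidistribution at scale $e^{-1/\epsilon}$ is exactly what makes per-channel Poincar\'e-type energy lower bounds aggregate into the correct $L^2(\SS^{d-1})$ inner product with $o(1)$ error; since the equidistribution bound is uniform, so is the resulting rate of convergence of eigenvalues.
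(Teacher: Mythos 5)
Your overall strategy is sound, and your upper bound is essentially the paper's: the paper packages ``restrict to $W_\epsilon$ and radially interpolate through the channels'' as a linear operator $\operatorname{RI}$ and applies Courant--Fischer to the image of the span of the first $k+1$ effective eigenfunctions; since those eigenfunctions are smooth up to $\SS^{d-1}$ on each side, your Riemann-sum/equidistribution treatment of the channel energies is legitimate there. The genuine gap is in the lower bound, at exactly the point you flag as ``the main obstacle'' and then assert rather than prove. The per-segment Cauchy--Schwarz (Poincar\'e) bound controls the average of the squared jump $|g((1{+}\epsilon/2)e)-g((1{-}\epsilon/2)e)|^2$ over the sieve $S_\epsilon$ only, whereas $D_{\beta,\delta}$ requires the average over all of $\SS^{d-1}$. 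For the Neumann eigenfunctions of $\Omega_\epsilon$ the squared jump has only the regularity of the square of an $H^{1/2}(\SS^{d-1})$ trace, and equidistribution of $S_\epsilon$ at scale $e^{-1/\epsilon}$ does \emph{not} by itself let you exchange the two averages: a function bounded in $L^2(\SS^{d-1})$ can concentrate precisely on the exponentially sparse set $S_\epsilon$. What is needed is a quantitative pairing of a negative-regularity bound on $\frac{1}{|\SS^{d-1}|}-\frac{1}{|S_\epsilon|}\1_{S_\epsilon}$ (the paper proves a $W^{-1/4,\infty}$ bound, \Cref{lem:homogen}) against a positive-regularity bound on the squared trace (obtained via the trace theorem and Kato--Ponce, placing it in $W^{1/2,1}(\SS^{d-1})$). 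Without such an estimate your aggregation step, and likewise your claim that the bulk traces ``stabilize in $L^2(\SS^{d-1})$,'' is unproven. A minor related imprecision: the per-channel lower bound you want is not ``the linear profile minimizes the energy'' (that is only true for constant end data) but the pointwise-in-$e$ Cauchy--Schwarz inequality $|g((1{+}\epsilon/2)e)-g((1{-}\epsilon/2)e)|^2\le\epsilon\int|\partial_r g(re)|^2\,dr$, integrated over $S_\epsilon$.

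There is also a structural difference worth noting. Your lower bound proceeds by extension, subsequence extraction and weak lower semicontinuity; this proves convergence but does not by itself yield a rate depending only on $\beta,\delta,d,k$ uniformly over all admissible sieves, which is part of the statement. The paper instead introduces a second operator $\operatorname{RE}$ that radially stretches the two bulk components of $\Omega_\epsilon$ onto $B_1(\RR^d)$ and $B_{1+\delta}(\RR^d)\setminus B_1(\RR^d)$, discarding the necks, and proves quantitative mass and energy estimates for it (\Cref{lem:norm_preservation}). Both directions then become one-pass Courant--Fischer arguments with explicit $O(\epsilon)$ errors (modulo the homogenization error), which is what delivers the uniform rate. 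To salvage your route you would either need to make the lower-semicontinuity argument quantitative or run a compactness-plus-contradiction argument over sequences of sieves, and even then you would obtain uniformity without an explicit rate.
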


\newcommand{\LI}{\operatorname{RI}}
\newcommand{\RE}{\operatorname{RE}}

In order to show that this is the case, we will construct two operators, \(\LI\) and \(\RE\), which approximately conjugate \(-\Delta\) in $\Omega_\epsilon$ and \(D_{\beta,\delta}\) to each other.

First, we define the linear interpolation operator \(\LI: L^2(B_{\rad}(\RR^d)) \cap H^1(B_{\rad}(\RR^d) \setminus \SS^{d-1}) \to H^1(\Omega_{\epsilon})\) as
\[\LI g(x) :=
\begin{cases}
g(x) & \text{ if } x \in W_\epsilon, \\
\frac{(1+\epsilon/2-|x|)}{\epsilon}
g\left(\left (1-\frac \epsilon 2\right)\frac x {|x|}\right) + 
\frac{(|x|-1+\epsilon/2)}{\epsilon}
g\left(\left (1+\frac \epsilon 2\right)\frac x {|x|}\right)  & \text{ if } x \in N_\epsilon.
\end{cases}
\]
In other words, the \(\LI\) operator \emph{substitutes} the value at the neck \(N_\epsilon\) by the radial linear interpolation of $g$. Analogously, we can \emph{remove} the neck and create a function with domain the whole ball of radius \(1+\delta\) by \emph{stretching} the value. We define the removal operator \(\RE : H^1(\Omega_\epsilon) \to L^2(B_{\rad}(\RR^d)) \cap H^1(B_{\rad}(\RR^d) \setminus \SS^{d-1})\) as
\[
\RE g(x)
:=
\begin{cases}
    g((1-\epsilon/2) x)& \text{ if } |x|<1,\\
    g\left(\frac x {|x|}\left(1+\frac\epsilon 2 + (|x|-1)(1- \delta^{-1}\epsilon/2 ) \right) \right) &\text{ if } |x|>1.\\
\end{cases}
\]
This operator \emph{removes} the data at the neck, mapping the ball \(B_{1-\epsilon/2}(\RR^d)\) to \(B_{1}(\RR^d)\) and the shell \(\left(B_{1+\delta}(\RR^d)-B_{1+\epsilon/2}(\RR^d)\right)\) to \(\left(B_{1+\delta}(\RR^d)-B_{1}(\RR^d)\right)\) by shifting and stretching the domains radially, combining the results, and throwing away the values of \(g\) for $|x|\in (1-\epsilon/2,1+\epsilon/2)$. 

The operators \(\LI\) and \(\RE\) are approximate isometries that approximately conjugate the Laplace operator to \(D_{\beta, \delta}\) in the following sense:

\begin{lemma}%
    \label{lem:norm_preservation}
    As \(\epsilon\to 0\), the following inequalities hold for any functions \(f\in H^1(B_{1+\delta}(\RR^d)\setminus \SS^{d-1})\) and \(g\in H^1(\Omega_{\epsilon})\):
\begin{enumerate}
    \item Mass preservation: 
        \begin{equation*}
            |\|\LI f\|_{L^2(\Omega_\epsilon)}-\|f\|_{L^2(B_{1+\delta}(\RR^d))}| \le C_{\beta,\delta,d}\epsilon\cdot (\|f\|_{L^2(B_{1+\delta}(\RR^d))}+ D_{\beta,\delta}(f,f)^{1/2}),
        \end{equation*}
        \begin{equation*}
            |\| g\|_{L^2(\Omega_\epsilon)}-\|\RE g\|_{L^2(B_{1+\delta}(\RR^d))}| \le C_{\beta,\delta,d}\epsilon\cdot (\|g\|_{L^2(\Omega_\epsilon
)}+ \|\nabla g\|_{L^2(\Omega_\epsilon)}).
        \end{equation*}
    \item Energy control:     
        \begin{equation*}
            \|\nabla \LI f\|_{L^2(\Omega_\epsilon)}^2\le D_{\beta,\delta}(f,f) + C_{\beta,\delta,d}\epsilon\cdot (\|f\|_{L^2(B_{1+\delta}(\RR^d))}^2+ D_{\beta,\delta}(f,f)),
        \end{equation*}
        \begin{equation*}
            D_{\beta,\delta}(\RE g,\RE g) \le \|\nabla g\|_{L^2(\Omega_\epsilon)}^2 + C_{\beta,\delta,d}\epsilon \cdot (\|g\|_{L^2(\Omega_\epsilon)}^2+ \|\nabla g\|_{L^2(\Omega_\epsilon)}^2).
        \end{equation*}

\end{enumerate}
\end{lemma}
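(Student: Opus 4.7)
The plan is to decompose every integral into a \emph{bulk} contribution from $W_\epsilon = \Omega_\epsilon \setminus N_\epsilon$ (matched with the analogous parts of $B_{1+\delta}(\RR^d)\setminus \SS^{d-1}$) and a contribution from the narrow neck region $N_\epsilon$. On the bulk, $\LI$ acts as the identity while $\RE$ is a radial rescaling with Jacobian $1+O(\epsilon)$, so a standard change-of-variables argument yields the $L^2$ and $H^1$ statements on the bulk with the claimed $O(\epsilon)$ error. All the non-trivial content therefore concentrates in the interaction with the sieve, where \Cref{Def: Sieve description} will provide the equidistribution tool needed to match the $N_\epsilon$-integrals against the effective sieve energy of $D_{\beta,\delta}$.

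For the $\LI$ bounds, since $\LI f = f$ on $W_\epsilon$, mass preservation reduces to bounding $\|\LI f\|_{L^2(N_\epsilon)}^2$, controlled by the small volume of $N_\epsilon$ combined with a trace estimate on $\partial B_{1\pm\epsilon/2}$. For the energy, the tangential derivative of $\LI f$ on $N_\epsilon$ vanishes to leading order and the radial derivative equals $(f((1+\epsilon/2)e)-f((1-\epsilon/2)e))/\epsilon$; polar integration together with the sieve equidistribution will convert $\int_{N_\epsilon}|\partial_r \LI f|^2$ into precisely the sieve term $\alpha\fint_{\SS^{d-1}}|f(1^+e)-f(1^-e)|^2\,de$ in $D_{\beta,\delta}(f,f)$, up to the polar Jacobian correction and the mismatch between evaluating $f$ at $(1\pm\epsilon/2)e$ versus at $1^\pm e$, both of which produce errors of the stated size via one-dimensional Poincaré estimates in the radial direction.

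For the $\RE$ bounds, mass preservation and the bulk energy estimate
\[
\int_{B_1(\RR^d)}|\nabla \RE g|^2 + \int_{B_{1+\delta}(\RR^d)\setminus B_1(\RR^d)}|\nabla \RE g|^2 \le (1+O(\epsilon))\|\nabla g\|_{L^2(W_\epsilon)}^2
\]
follow directly from the change-of-variables formula for the two radial rescalings. The critical step is to control the sieve term $\alpha\fint_{\SS^{d-1}}|g((1+\epsilon/2)e) - g((1-\epsilon/2)e)|^2\,de$ by $\|\nabla g\|_{L^2(N_\epsilon)}^2 + O(\epsilon)(\|g\|^2 + \|\nabla g\|^2)$. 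On the portion $e \in S_\epsilon$, Cauchy--Schwarz in the radial direction immediately gives $|g((1+\epsilon/2)e) - g((1-\epsilon/2)e)|^2 \le \epsilon\int_{1-\epsilon/2}^{1+\epsilon/2}|\partial_r g(re)|^2\,dr$, which after integration over $S_\epsilon$ and accounting for the sieve density recovers (up to an absorbable $O(\epsilon)$ factor) exactly $\|\nabla g\|_{L^2(N_\epsilon)}^2$.

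The main obstacle will be the contribution from $e\in\SS^{d-1}\setminus S_\epsilon$, where no radial path from $(1-\epsilon/2)e$ to $(1+\epsilon/2)e$ lies inside $\Omega_\epsilon$. For each such $e$ I pick a nearest $\pi(e)\in S_\epsilon$, which by the equidistribution in \Cref{Def: Sieve description} satisfies $|e-\pi(e)|\le e^{-1/\epsilon}$, and split
\[
g((1+\epsilon/2)e)-g((1-\epsilon/2)e) = \bigl[h^+(e)-h^+(\pi(e))\bigr] + \bigl[h^+(\pi(e))-h^-(\pi(e))\bigr] + \bigl[h^-(\pi(e))-h^-(e)\bigr],
\]
where $h^\pm$ denote the traces of $g$ on $\partial B_{1\pm\epsilon/2}$. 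The middle term is handled exactly as in the previous paragraph, with a Fubini-type accounting of the pushforward $\pi_\ast\sigma$ (whose Radon--Nikodym derivative on $S_\epsilon$ is compatible with the sieve scaling by equidistribution). For the two tangential pieces my plan is to use the trace embedding $\|h^\pm\|_{H^{1/2}(\SS^{d-1})}\lesssim \|g\|_{H^1}$ together with the standard $H^{1/2}$ modulus-of-continuity bound $\|h-\tau_v h\|_{L^2(\SS^{d-1})}^2\lesssim |v|\,\|h\|_{H^{1/2}}^2$; with $|v|\le e^{-1/\epsilon}$ this produces a contribution of order $e^{-1/\epsilon}\|g\|_{H^1}^2$, well inside the $O(\epsilon)$ error. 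Collecting all pieces yields the stated bounds.
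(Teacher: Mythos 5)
Your overall skeleton matches the paper's: split every quantity into a bulk part (where \(\LI\) is the identity and \(\RE\) is a radial rescaling with Jacobian \(1+O(\epsilon)\)) and a neck part, and use the radial Cauchy--Schwarz bound \(|g((1+\epsilon/2)e)-g((1-\epsilon/2)e)|^2\le \epsilon\int|\partial_r g(re)|^2\,dr\) inside the channels. The divergence, and the gap, is in the homogenization step, i.e.\ the passage between averages over \(\SS^{d-1}\) and averages over \(S_\epsilon\). (A smaller bookkeeping slip first: the contribution of \(e\in S_\epsilon\) to \(\alpha\fint_{\SS^{d-1}}|\cdot|^2\) carries the factor \(|S_\epsilon|/|\SS^{d-1}|\to 0\), so it is negligible rather than ``exactly \(\|\nabla g\|^2_{L^2(N_\epsilon)}\)''; the full neck energy must be recovered from the generic \(e\notin S_\epsilon\) through your middle term.)

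The genuine problem is the nearest-point map \(\pi\). \Cref{Def: Sieve description} controls only the \emph{measure} of \(S_\epsilon\) in balls of radius \(e^{-1/\epsilon}\), not the internal geometry of \(S_\epsilon\) at finer scales, and the nearest-point projection can concentrate: if inside a ball \(B\) of radius \(e^{-1/\epsilon}\) the set \(S_\epsilon\cap B\) is a single small cap \(C\), then \(\pi\) sends all of \(B\setminus C\) onto \(\partial C\), a \(\sigma\)-null set. Hence \(\pi_*\sigma\) need not be absolutely continuous with density \(\approx |\SS^{d-1}|/|S_\epsilon|\) with respect to \(\sigma|_{S_\epsilon}\). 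This is fatal for the middle term, because the function you integrate against \(\pi_*\sigma\), namely \(e'\mapsto\int_{1-\epsilon/2}^{1+\epsilon/2}|\partial_r g(re')|^2\,dr\), is merely an \(L^1\) function of \(e'\) with no pointwise meaning on null sets, so \(\int F\,d(\pi_*\sigma)\) cannot be compared to \(\fint_{S_\epsilon}F\,d\sigma\), i.e.\ to \(\|\nabla g\|^2_{L^2(N_\epsilon)}\), even up to a factor \(1+O(\epsilon)\). The same concentration defeats the tangential pieces: \(\|h-\tau_vh\|^2_{L^2}\lesssim|v|\,\|h\|^2_{H^{1/2}}\) is a statement about translations, not about composition with an arbitrary measurable \(\pi\) satisfying \(|e-\pi(e)|\le e^{-1/\epsilon}\) (for \(d\ge3\) an \(H^{1/2}(\SS^{d-1})\) trace can be unbounded, and \(h\circ\pi\) may then fail to lie in \(L^2\)). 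The repair is to replace the single nearest point by the average over \(S_\epsilon\cap B_{e^{-1/\epsilon}}(e)\), whose mass is \((\epsilon^{2}\alpha+O(e^{-1/\epsilon}))|B_{e^{-1/\epsilon}}|\) by equidistribution, so the resulting kernel has uniformly bounded density; carried out, this essentially reconstructs the paper's route, which keeps the jump on the whole sphere, notes via Kato--Ponce that \(|f(1^+\cdot)-f(1^-\cdot)|^2\) lies in a positive-order Sobolev space over \(L^1\), and pairs it with the signed measure \(\frac 1 {|\SS^{d-1}|}-\frac 1 {|S_\epsilon|}\1_{S_\epsilon}\), whose negative Sobolev norm is shown to be exponentially small by duality and smoothing (\Cref{lem:homogen}).
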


\Cref{lem:norm_preservation} contains all of the geometric information about \(\Omega_\epsilon\) that is needed for the proof of \Cref{{lem:eigenvalues_converge}}. Indeed, once \Cref{lem:norm_preservation} is established, \Cref{{lem:eigenvalues_converge}} follows from a standard Courant-Fischer argument.

\begin{proof}
    [Proof of \Cref{{lem:eigenvalues_converge}}]
Let \(E_k:=\text{span}\{\LI \psi^{(i)}_{\beta,\delta}\}_{i=0}^k\) and fix \(v \in E_k\) with norm \(1\). We may express \(v = \LI \left(\sum_{i=0}^k \alpha_i \psi^{(i)}_{\beta,\delta}\right) \). Norm-preservation (together with the fact that any linear combination of the first \(k\) eigenfunctions will have bounded energy) guarantees that \(\sum_{i=0}^k \alpha_i^2 = 1+o(1)\). Energy control then guarantees that
\[
\|\nabla v\|^2_{L^2(\Omega_\epsilon)} \le \mu_{\beta,\delta}^{(k)} \sum_{i=0}^k \alpha_i^2 + o(1)(1+\mu_{\beta,\delta}^{(k)})\|v\|^2_{L^2(\Omega_\epsilon)} \le (\mu_{\beta,\delta}^{(k)}+o(1)) \|v\|_{L^2(\Omega_\epsilon)}^2.
\]
In particular, \(\mu_{\Omega_{\epsilon}}^{(k)} \le \mu_{\beta,\delta}^{(k)} + o(1)\), where the \(o(1)\) term goes to \(0\) as \(\epsilon \to 0\) at a rate depending only on \(\beta, \delta, d, k\). The symmetric argument (replacing \(\LI\) with \(\RE\)) gives \(\mu_{\beta,\delta}^{(k)} \le \mu_{\Omega_{\epsilon}}^{(k)} + o(1)\) and therefore \(\lim_{\epsilon \to 0} \mu_{\Omega_{\epsilon}}^{(k)}= \mu_{\beta,\delta}^{(k)}\), uniformly over all possible choices of thick Neumann sieves. Note that for the second argument one needs uniform upper bounds on the eigenvalues of \(\mu_{\Omega_{\epsilon}}^{(k)}\), which are provided by the first argument. 
\end{proof}

\subsection{Proof of the approximation inequalities}

The main issue in proving \Cref{lem:norm_preservation} is that the size of the spherical sieve \(S_{\epsilon}\) goes to zero as \(\epsilon\to 0\). This becomes a challenge when trying to estimate differences of the form
\begin{equation*}
    \fint_{\SS^{d-1}} g(x) dx - \fint_{S_{\epsilon}} g(x) dx.
\end{equation*}
The functions of interest arise as squares of traces of \(H^1\) functions. If $h$ is a function in $H^1$, then its trace is in $W^{1/2,2} \subset W^{1/4, 2+ 2/(2d-1)}$, and the square of its trace, by Kato-Ponce, is in $W^{1/4, 1+1/(2d-1)}\subset W^{1/4, 1}$. In particular, for $g:B_{1+\delta}(\mathbb R^d)\to \mathbb R$ we have
\begin{equation*}
    \left|\fint_{\SS^{d-1}} g^2(x) dx - \fint_{S_{\epsilon}} g^2(x) dx\right|
    \lesssim_{d,\delta} \|g\|^2_{W^{1/2,2}(\SS^{d-1})} \left \|\frac 1 {|\SS^{d-1}|} - \frac 1 {|S_\epsilon|} \1_{S_{\epsilon}}\right \|_{W^{-1/4,\infty}(\SS^{d-1})}.
\end{equation*}
\begin{lemma} [Homogenization estimate]\label{lem:homogen} We have
    \[\left \|\frac 1 {|\SS^{d-1}|} - \frac 1 {|S_\epsilon|} \1_{S_{\epsilon}}\right \|_{W^{-1/4,\infty}(\SS^{d-1})} \le C_{\beta,\delta,d} e^{- \frac{1}{5\epsilon}}.
    \]
\end{lemma}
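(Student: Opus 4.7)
The strategy is to prove a strong bound in $W^{-1,\infty}(\SS^{d-1})$ via a direct covering argument using the sieve equidistribution, and then interpolate with the trivial $L^\infty$ bound to deduce the $W^{-1/4,\infty}$ estimate. Writing $\mu := \frac{1}{|\SS^{d-1}|} - \frac{1}{|S_\epsilon|}\1_{S_\epsilon}$, note that $\int \mu\,d\sigma = 0$. Applying the sieve property to a cover of $\SS^{d-1}$ by balls of radius $r_0 := e^{-1/\epsilon}$ yields $|S_\epsilon| = \epsilon^2\alpha|\SS^{d-1}|(1+O(e^{-1/\epsilon}))$, so $\|\mu\|_{L^\infty}\lesssim \epsilon^{-2}$.

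For the main step $\|\mu\|_{W^{-1,\infty}} \lesssim \epsilon^{-2}e^{-1/\epsilon}$, I would proceed by duality with $W^{1,1}$. Given $\phi \in W^{1,1}(\SS^{d-1})$, partition $\SS^{d-1}$ up to measure zero into cells $\{E_j\}$, each sandwiched between concentric balls $B_{cr_0}(x_j)\subset E_j \subset B_{r_0}(x_j)$, so that the sieve equidistribution transfers from $B_{r_0}(x_j)$ to $E_j$. Setting $\bar\phi_j := \fint_{E_j}\phi$ and rewriting $\int\phi\,\mu = \fint_{\SS^{d-1}}\phi - \fint_{S_\epsilon}\phi$, one gets the decomposition
\begin{equation*}
\int \phi\,\mu = \sum_j \bar\phi_j\left(\frac{|E_j|}{|\SS^{d-1}|} - \frac{|S_\epsilon\cap E_j|}{|S_\epsilon|}\right) - \frac{1}{|S_\epsilon|}\sum_j \int_{S_\epsilon\cap E_j}(\phi - \bar\phi_j).
\end{equation*}
The parenthesis in the first sum is $O(e^{-1/\epsilon}|E_j|)$ by the transferred sieve equidistribution, so the first sum is $\lesssim e^{-1/\epsilon}\|\phi\|_{L^1}$. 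Poincaré on each $E_j$ controls $\int_{E_j}|\phi-\bar\phi_j| \lesssim r_0\|\nabla\phi\|_{L^1(E_j)}$, and dividing by $|S_\epsilon|\sim \epsilon^2$ and summing bounds the second sum by $\epsilon^{-2}r_0\|\nabla\phi\|_{L^1} = \epsilon^{-2}e^{-1/\epsilon}\|\nabla\phi\|_{L^1}$.

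The last step is the interpolation inequality $\|\mu\|_{W^{-1/4,\infty}} \lesssim \|\mu\|_{L^\infty}^{3/4}\|\mu\|_{W^{-1,\infty}}^{1/4}$, a standard consequence of complex interpolation between Bessel-potential spaces. To sidestep technicalities at $p=\infty$, one can derive it from the heat-semigroup characterization $\|\mu\|_{W^{-s,\infty}} \sim \sup_{t>0}\min(1,t^{s/2})\|e^{t\Delta}\mu\|_{L^\infty}$ together with the bound $\|e^{t\Delta}\mu\|_{L^\infty}\lesssim t^{-1/2}\|\mu\|_{W^{-1,\infty}}$ (which comes from writing $\mu = \operatorname{div} F$ with $\|F\|_{L^\infty}\sim\|\mu\|_{W^{-1,\infty}}$ and using $\|\nabla e^{t\Delta}F\|_{L^\infty}\lesssim t^{-1/2}\|F\|_{L^\infty}$). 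Plugging in the two endpoint bounds gives $\|\mu\|_{W^{-1/4,\infty}} \lesssim \epsilon^{-2}e^{-1/(4\epsilon)}$. For $\epsilon<\epsilon_0$ the polynomial factor $\epsilon^{-2}$ is absorbed using $\epsilon^{-2}\le e^{1/(20\epsilon)}$, yielding the desired $\le e^{-1/(5\epsilon)}$; for $\epsilon\ge \epsilon_0$ the bound holds with the constant $C_{\beta,\delta,d}$.

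The hardest step is the transfer of sieve equidistribution from balls to the partition cells $E_j$, since the sieve property is stated only for balls. A Voronoi partition based on an $r_0$-separated net provides cells with $B_{cr_0}(x_j)\subset E_j\subset B_{r_0}(x_j)$, and the defect $||S_\epsilon\cap E_j| - \epsilon^2\alpha|E_j||$ is bounded by applying the sieve property to $B_{r_0}(x_j)$ and correcting by the pointwise $L^\infty$ contribution on the thin boundary strip $B_{r_0}(x_j)\setminus E_j$, whose measure is small enough that this correction is compatible with the $O(e^{-1/\epsilon})$ error tolerated in the final estimate.
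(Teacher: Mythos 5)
Your proposal is correct and follows essentially the same route as the paper: the trivial $L^\infty$ bound of order $\epsilon^{-2}$, a $W^{-1,\infty}$ bound of order $e^{-1/\epsilon}$ (up to polynomial factors) obtained by duality with $W^{1,1}$ using the equidistribution hypothesis at scale $e^{-1/\epsilon}$, and then interpolation between the two to reach $W^{-1/4,\infty}$. The paper implements the duality step with a cap-averaging operator $A_{e^{-1/\epsilon}}$ and the estimate $\|A_s f - f\|_{L^1}\lesssim s\|f\|_{W^{1,1}}$ rather than your Voronoi partition plus cell-wise Poincar\'e inequality, but these are the same estimate in different clothing, and your handling of the residual polynomial factor in the final exponent is, if anything, slightly more careful than the paper's.
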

In fact, showing convergence at a \emph{uniform} rate would be enough, and the exponential rate is not necessary. This would require allowing for balls of size \(\epsilon^{N_{d}}\) instead of \(e^{-1/\epsilon}\) as the balls for which~\eqref{eq:sieve_error} holds. This type of argument is not applicable to the more commonly studied \emph{thin} versions of the Neumann sieve, but it leads to a self-contained argument.

We will first show the energy bound estimates and derive the mass preservation estimates from those. 

\begin{proof}
    [Proof of the energy bound estimates in \Cref{lem:norm_preservation}]
    Let \(f\in H^1(B_{1+\delta}(\RR^d)\setminus\SS^{d-1})\). Then 
    \[
    \int_{\Omega_{\epsilon}} |\nabla \LI f|^2 dx =
    (1+O(\epsilon)) \left(
    \int_{B_{1+\delta}(\RR^d)\setminus\SS^{d-1}} |\nabla  f|^2 dx 
    +
    |\SS^{d-1}| \alpha \fint_{S_{\epsilon}} |f(1^- x)-f(1^+ x)|^2 dx
    \right).
    \]
    The \(W^{1/2,1}\)-norm of \(|f(1^- x)-f(1^+ x)|^2\) is bounded by the \(H^1(B_{1+\delta}(\RR^d)\setminus\SS^{d-1})\) norm of \(f\). The result then follows from the estimate in \Cref{lem:homogen}  applied to the last term, because    
    \[
    \begin{aligned}
    \bigg |\fint_{S_{\epsilon}} |f(1^- x)-f(1^+ x)|^2 dx
    - \fint_{\SS^{d-1}} |&f(1^- x)-f(1^+ x)|^2 dx \bigg |
    \\ \le \||&f(1^- x)-f(1^+ x)|^2\|_{W^{1/2,1}(\SS^{d-1})} \left \|\frac 1 {|\SS^{d-1}|} - \frac 1 {|S_\epsilon|} \1_{S_{\epsilon}}\right \|_{W^{-1/4,\infty}(\SS^{d-1})} .
        \end{aligned}
    \]
The second energy bound follows from the same approach: by the Poincaré inequality on segments, for any function  \(g\in H^1(\Omega_{\epsilon})\) we have
    \[
    \begin{aligned}
    \alpha \int_{\SS^{d-1}} |g((1+\epsilon/2) x)- g((1-\epsilon/2)x)|^2 dx 
    &= (1+O(\epsilon)) \alpha |\SS^{d-1}|   \fint_{S_{\epsilon} } |g((1+\epsilon/2) x)- g((1-\epsilon/2)x)|^2 dx 
    \\  &\le 
    \int_{N_{\epsilon}} |\partial_r g(x)|^2 dx
    \le 
    \int_{N_{\epsilon}} |\nabla g(x)|^2 dx.
    \end{aligned}
    \]
    This shows that the contributions from the neck $N_\epsilon$ asymptotically bound the contributions in the jump. 

    The norm concentration estimates are proven in essentially the same way. The key estimate states that for any function bounded in \(H^1\), there cannot be much mass in the necks, because the necks are contained in a set of size \(\lesssim \epsilon\):

    \[
    \int_{N_{\epsilon}} g(x)^2 dx \lesssim \epsilon \left( \int_{\SS^{d-1}} g((1+\epsilon/2) x)^2 + g((1-\epsilon/2) x)^2 dx + \int_{N_{\epsilon}} |\nabla g(x)|^2 dx \right).
    \]
    This shows that adding (or removing) the necks will not change the mass by much, thus proving the norm-preservation inequalities.
\end{proof}

\begin{proof}
    [Proof of \Cref{lem:homogen}]

    Let \(W(x):= \frac 1 {|\SS^{d-1}|} - \frac 1 {|S_\epsilon|} \1_{S_{\epsilon}}\). We know that \(\| W(x) \|_{L^\infty(\SS^{d-1})} \approx \epsilon^{-2}\), 
    and therefore it suffices to prove that \(\| W(x) \|_{W^{-1,\infty}(\SS^{d-1})} \lesssim e^{-\frac 4 {5\epsilon}}\). We show this by duality.
    Let \(f\in W^{1,1}(\SS^{d-1})\) of norm one. Let \(A_{s}\) be the averaging operator at scale \(s\) (averaging \(f\) on caps of radius \(s\)), which satisfies \(\|A_s f - f\|_{L^1(\SS^{d-1})}\lesssim s \|f\|_{W^{1,1}(\SS^{d-1})}\). We have

    \[
    \begin{aligned}
    \langle W, f \rangle 
    \le &
    \|W\|_{L^\infty(\SS^{d-1})}\| (A_{e^{-1/\epsilon}}-I)f\|_{L^1(\SS^{d-1})} + 
    \|A_{e^{-1/\epsilon}}W\|_{L^\infty(\SS^{d-1})}\|f-\bar{f}\|_{L^1(\SS^{d-1})}
    \\ \lesssim & 
    \epsilon^{-2}\cdot e^{-\frac 1{\epsilon}}
    +
    e^{-1/\epsilon}\cdot 1
    \lesssim e^{-\frac 4{5\epsilon}}.
    \end{aligned}.
    \]
    The bound \(\|A_{e^{-1/\epsilon}}W\|_{L^\infty(\SS^{d-1})}\lesssim 
    \epsilon^{-\frac 1{\epsilon}}\) is precisely~\eqref{eq:sieve_error}. The result then follows by interpolation.

\end{proof}

\subsection{Estimating the first eigenfunction of \texorpdfstring{\(\Omega_\epsilon\)}{} near the sieve}

By hypothesis, the first nontrivial eigenvalue of \(D_{\beta,\delta}\) is unique, with a non-zero spectral gap. In particular, we have the following lemma.

\begin{lemma}
    The functions \(\RE(\psi_{\Omega_\epsilon}^{(1)})\) must converge in \(L^2(B_{1+\delta}(\mathbb R^d))\) to \(\psi_{\beta,\delta}^{(1)}\). By elliptic regularity, this convergence holds in the \(C^\infty_{loc}\left(\overline{B_{1+\delta}(\mathbb R^d)}\setminus \SS^{d-1}\right)\) topology as well.
\end{lemma}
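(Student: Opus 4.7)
The plan is a standard Rayleigh-quotient/compactness argument, leveraging the simple first eigenvalue of $D_{\beta,\delta}$ together with the approximate-conjugation properties of $\RE$ and $\LI$ from \Cref{lem:norm_preservation} and the eigenvalue convergence from \Cref{lem:eigenvalues_converge}.

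First I would collect uniform bounds. Since $\psi_{\Omega_\epsilon}^{(1)}$ is $L^2$-normalized in $\Omega_\epsilon$ with Dirichlet energy $\mu_{\Omega_\epsilon}^{(1)}$, and since $\mu_{\Omega_\epsilon}^{(1)} \to \mu_{\beta,\delta}^{(1)}$ by \Cref{lem:eigenvalues_converge}, the mass-preservation and energy-control inequalities of \Cref{lem:norm_preservation} give
\begin{equation*}
    \|\RE(\psi_{\Omega_\epsilon}^{(1)})\|_{L^2(B_{1+\delta}(\RR^d))}=1+o(1),\qquad D_{\beta,\delta}(\RE(\psi_{\Omega_\epsilon}^{(1)}),\RE(\psi_{\Omega_\epsilon}^{(1)}))\le \mu_{\beta,\delta}^{(1)}+o(1).
\end{equation*}
Because $D_{\beta,\delta}$ has compact resolvent, the form unit ball embeds compactly in $L^2(B_{1+\delta}(\RR^d))$, so any subsequence of $\RE(\psi_{\Omega_\epsilon}^{(1)})$ has a further subsequence converging strongly in $L^2$ (and weakly in the form domain) to some $\psi_\ast$ with $\|\psi_\ast\|_{L^2}=1$.

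Next I would identify $\psi_\ast$ with $\psi_{\beta,\delta}^{(1)}$. Since $\int_{\Omega_\epsilon}\psi_{\Omega_\epsilon}^{(1)}=0$, a direct comparison of volumes (using that the measure of $N_\epsilon$ is $O(\epsilon)$ and that $\RE$ is an approximate isometry) shows $\int_{B_{1+\delta}(\RR^d)}\RE(\psi_{\Omega_\epsilon}^{(1)})=o(1)$, so $\psi_\ast \perp \mathbf{1}$ in $L^2$. By lower semicontinuity of $D_{\beta,\delta}$ under weak convergence in its form domain, $D_{\beta,\delta}(\psi_\ast,\psi_\ast)\le \mu_{\beta,\delta}^{(1)}$. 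By the Rayleigh quotient characterization of $\mu_{\beta,\delta}^{(1)}$ on the orthogonal complement of constants, $\psi_\ast$ is therefore a minimizer, hence a unit-norm eigenfunction at eigenvalue $\mu_{\beta,\delta}^{(1)}$. The standing hypothesis that this eigenvalue is simple then forces $\psi_\ast=\pm\psi_{\beta,\delta}^{(1)}$; the sign can be fixed once and for all by normalizing, say, the sign of the value at the origin (so that, in the notation of \Cref{prop:neumann_goal}, $\sigma_n$ is absorbed into $\psi_{\Omega_\epsilon}^{(1)}$). Because every subsequence admits a further subsequence converging to the same limit $\psi_{\beta,\delta}^{(1)}$, the full sequence $\RE(\psi_{\Omega_\epsilon}^{(1)})$ converges in $L^2(B_{1+\delta}(\RR^d))$ to $\psi_{\beta,\delta}^{(1)}$.

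Finally I would upgrade to $C^\infty_{loc}\left(\overline{B_{1+\delta}(\RR^d)}\setminus\SS^{d-1}\right)$ convergence. The map $\RE$ is a smooth radial diffeomorphism on each component of $B_{1+\delta}(\RR^d)\setminus \SS^{d-1}$, converging in $C^\infty$ to the identity on each compact subset. Pulling back the Helmholtz equation $-\Delta \psi_{\Omega_\epsilon}^{(1)} = \mu_{\Omega_\epsilon}^{(1)}\psi_{\Omega_\epsilon}^{(1)}$ shows that $\RE(\psi_{\Omega_\epsilon}^{(1)})$ satisfies a second-order linear elliptic equation with smooth, $\epsilon$-dependent coefficients converging to those of $-\Delta-\mu_{\beta,\delta}^{(1)}$, and the Neumann boundary condition on $\partial B_{1+\delta}(\RR^d)$ is preserved. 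Standard interior and boundary elliptic regularity (Schauder estimates) then bootstrap the $L^2$ convergence to $C^\infty$ convergence on any compact $K\Subset \overline{B_{1+\delta}(\RR^d)}\setminus \SS^{d-1}$.

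The main obstacle I would expect is the verification that the Rayleigh-quotient minimization identifies $\psi_\ast$ with a genuine eigenfunction when the constraint ``mean zero'' only holds in the limit; this is handled by the quantitative $o(1)$ control above, together with the fact that subtracting a vanishing constant does not affect weak limits or the form value at leading order. All other steps are either direct applications of \Cref{lem:norm_preservation}, \Cref{lem:eigenvalues_converge}, or classical elliptic theory.
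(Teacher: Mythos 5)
Your argument is correct and is precisely the standard spectral-gap/compactness argument that the paper alludes to but omits (the lemma is stated as an immediate consequence of the simplicity of $\mu^{(1)}_{\beta,\delta}$ together with \Cref{lem:norm_preservation} and \Cref{lem:eigenvalues_converge}). All the steps — the uniform form bounds for $\RE(\psi^{(1)}_{\Omega_\epsilon})$, the approximate mean-zero condition, lower semicontinuity of the form, identification of the limit via the Rayleigh quotient and simplicity, and the elliptic bootstrap away from $\SS^{d-1}$ including the Neumann condition at the outer sphere — check out.
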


Near \(\SS^{d-1}\) we do not have direct access to similarly fine estimates, but we can still upper bound \(\psi_{\Omega_\epsilon}^{(1)}\) in \(N_{\epsilon}\):

\begin{lemma}
    We have \[\limsup_{\epsilon \to 0} \max_{x\in N_{\epsilon}} \psi_{\Omega_\epsilon}^{(1)}(x) \le \max_{\substack{r \in \{1^+, 1^-\}}} \psi^{(1)}_{\beta,\delta}(r).\]
\end{lemma}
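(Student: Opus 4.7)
Plan: The strategy is to bound \(u := \psi^{(1)}_{\Omega_\epsilon}\) on \(N_\epsilon\) by a radial Helmholtz barrier on a thin shell around \(\SS^{d-1}\), and then control the barrier's boundary data using the \(C^\infty_{loc}\) convergence \(\RE\psi^{(1)}_{\Omega_\epsilon}\to \psi^{(1)}_{\beta,\delta}\) from the preceding lemma.

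Fix a small \(\rho>0\) (to be sent to zero at the end). For \(\epsilon<\rho\), consider the thick shell \(A_\rho^\epsilon := \Omega_\epsilon \cap \{1-\rho \le |y| \le 1+\rho\} \supset N_\epsilon\). On \(A_\rho^\epsilon\), \(u\) satisfies \(-\Delta u = \mu u\) with \(\mu := \mu^{(1)}_{\Omega_\epsilon}\), Neumann conditions on \(\partial\Omega_\epsilon \cap A_\rho^\epsilon\), and prescribed data on \(\{|y|=1\pm\rho\}\). The key geometric observation is that by the graphical condition on \(\Omega_\epsilon\), the channel walls inside \(N_\epsilon\) are parallel to the radial direction, so any radial function has vanishing normal derivative on them.

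Let \(M := \max_{|y|=1\pm\rho} u^+\) and let \(\Psi_\rho\) be the unique radial solution to \(-\Delta\Psi_\rho=\mu\Psi_\rho\) on the full annulus \(\{1-\rho<|y|<1+\rho\}\) with \(\Psi_\rho(1\pm\rho)=M\). For \(\rho\) small and \(M>0\), a direct ODE computation (with leading-order form \(\Psi_\rho(y)\approx M\cos(\sqrt{\mu}(|y|-1))/\cos(\sqrt{\mu}\rho)\)) shows that \(\Psi_\rho\ge 0\), \(\max\Psi_\rho\le M(1+C\rho^2)\) with \(C = C(d,\mu^{(1)}_{\beta,\delta})\), and \(\partial_r\Psi_\rho(1-\epsilon/2)\ge 0\), \(\partial_r\Psi_\rho(1+\epsilon/2)\le 0\) for all \(\epsilon\) sufficiently small. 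Combined with \(\partial_\nu\Psi_\rho\equiv 0\) on the radial channel walls, these sign conditions ensure \(\partial_\nu\Psi_\rho\ge 0\) on the entire Neumann part of \(\partial A_\rho^\epsilon\). Hence \(\Psi_\rho\) is a supersolution to the mixed Helmholtz problem on \(A_\rho^\epsilon\) majorizing \(u\) on the Dirichlet spheres. Since the first mixed Dirichlet-Neumann eigenvalue of \(-\Delta\) on \(A_\rho^\epsilon\) is bounded below by a constant multiple of \(\rho^{-2}\) (by Poincaré in the radial direction on the bulk shells, with a trivial contribution from the even thinner channels), the associated comparison principle applies for \(\rho\) small and yields \(\max_{N_\epsilon} u \le M(1+C\rho^2)\). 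In the degenerate case \(M\le 0\), the choice \(\Psi_\rho\equiv 0\) is a supersolution that gives \(u\le 0\) on \(N_\epsilon\).

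To finish, the \(C^\infty_{loc}\) convergence gives \(\limsup_{\epsilon\to 0} M \le \max_{|x|=1\pm\rho}\psi^{(1)}_{\beta,\delta}(x)^+\), which by continuity of \(\psi^{(1)}_{\beta,\delta}\) up to \(\SS^{d-1}\) from each side (via the explicit Bessel representation in Section~\ref{sec:effective_problem}) tends to \(\max(\psi^{(1)}_{\beta,\delta}(1^-),\psi^{(1)}_{\beta,\delta}(1^+))\) as \(\rho\to 0\). Sending \(\epsilon\to 0\) then \(\rho\to 0\) in the barrier estimate proves the lemma. The main delicate step is verifying the sign conditions on \(\partial_r\Psi_\rho\) at \(|y|=1\pm\epsilon/2\) and the eigenvalue lower bound on \(A_\rho^\epsilon\); both reduce to elementary one-variable analysis, and the geometric observation that the graphical sieve construction is compatible with a purely radial barrier is what makes the approach work despite the complicated boundary of \(\Omega_\epsilon\) inside the shell.
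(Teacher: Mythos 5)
Your overall architecture is the same as the paper's: a radial barrier on a thin shell around \(\SS^{d-1}\), made admissible by the graphical structure of the sieve (radial functions have vanishing normal derivative on the channel walls), combined with the locally uniform convergence of the eigenfunctions away from \(\SS^{d-1}\) and a final limit \(\rho \to 0\). The difference is the choice of barrier: the paper uses the explicit quadratic supersolution \(\bigl((\mu^{(1)}_{\beta,\delta}+1)(s^2-(|x|-1)^2)+1\bigr)\cdot M\), whereas you use the exact radial Helmholtz solution \(\Psi_\rho\) with symmetric Dirichlet data \(M\) on \(|y|=1\pm\rho\).

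That choice creates a genuine gap in precisely the step you flag as delicate. For \(d\ge 2\) the radial equation \(\Psi''+\frac{d-1}{r}\Psi'+\mu\Psi=0\) is not symmetric about \(r=1\), so the unique critical point \(r^*\) of \(\Psi_\rho\) (the zero of the strictly decreasing function \(r^{d-1}\Psi_\rho'\)) sits at \(r^*=1+c\rho^2+O(\rho^4)\) with \(c\neq 0\) in general; for instance, for \(d=2\) and small \(\mu\) one computes \(r^*=1-\rho^2/6+O(\rho^4)\). Since you send \(\epsilon\to 0\) first with \(\rho\) fixed, eventually \(\epsilon/2<|r^*-1|\), and then one of the two annular sieve faces \(|y|=1\mp\epsilon/2\) lies on the wrong side of \(r^*\), where \(\partial_r\Psi_\rho\) has the wrong sign. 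On that Neumann face \(\partial_\nu(\Psi_\rho-u)<0\), so \(\Psi_\rho\) is not a supersolution of the mixed problem and the comparison principle cannot be invoked. The cosine ``leading-order form'' hides exactly this \(O(\rho^2)\) shift, which is not negligible relative to \(\epsilon\). The repair is easy: either use the paper's quadratic barrier, whose critical point is at \(|x|=1\) by construction and which is a strict supersolution for \(s\) small, or replace \(\Psi_\rho\) by the radial solution normalized by \(\Psi'(1)=0\) and scaled so that \(\min(\Psi(1-\rho),\Psi(1+\rho))=M\). With either fix, the rest of your argument (the Poincar\'e lower bound for the mixed eigenvalue, the \(M(1+C\rho^2)\) loss, and the two successive limits) goes through and matches the paper's proof.
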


\begin{proof}
    Let \(0<\epsilon<s\) be a small parameter. If \(s\) is small enough, the function 
    \[b_{s,\epsilon}(x) = ((\mu_{\beta,\delta}^{(1)}+1)(s^2 - (|x|-1)^2)+1)\cdot\max_{\substack{r \in\{1+s, 1-s\}\\ e\in \SS^{d-1}}} \psi^{(1)}_{\Omega_{\epsilon}}(r\cdot e)\]
    is a barrier function for \(\psi_{\Omega_\epsilon}^{(1)}(x)\) in \(\{x\in \Omega_{\epsilon} \text{ s.t. } |x|\in [1-s,1+s]\}\). For fixed \(s\),  uniform convergence of \(\psi^{(1)}_{\Omega_{\epsilon}}(x)\) shows that
    \[\limsup_{\epsilon \to 0} \max_{x\in N_{\epsilon}} \psi_{\Omega_\epsilon}^{(1)}(x) \le (1+(\mu_{\beta,\delta}^{(1)}+1) s^2)\max_{\substack{r \in\{1+s, 1-s\}}} \psi^{(1)}_{\beta,\delta}(r).\]
    Sending \(s\to 0\) proves the result.
\end{proof}

\section{Asymptotics as \texorpdfstring{\(d\to \infty\)}{}}\label{sec:asymptotics}
Computing our desired asymptotics as \(d\to \infty\) reduces to the following lemma:
\begin{lemma}\label{lem:gaussian_asymp}
    The functions \(\eta_d(r)\) converge uniformly for \(r\in [0,1]\) to the function
    \begin{equation*}
        \eta_{\infty}(r):= e^{(1-r^2)/2}.
    \end{equation*}
\end{lemma}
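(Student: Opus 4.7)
The plan is to reduce the claim to classical Bessel function asymptotics of large order. By the definition of $\eta_d$ together with the Taylor series
\[
J_\nu(z) = \frac{(z/2)^\nu}{\Gamma(\nu+1)}\,{}_0F_1(;\nu+1;-z^2/4),
\]
the polynomial prefactor $r^{1-d/2}$ cancels cleanly, leaving
\[
\eta_d(r) = \frac{{}_0F_1(;d/2; -r^2\mu_d/4)}{{}_0F_1(;d/2; -\mu_d/4)}, \qquad \mu_d := \mu_{B_1(\mathbb{R}^d)}.
\]
So it is enough to approximate the hypergeometric series in the numerator and denominator.

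Two ingredients drive the argument. First, I would show that $\mu_d = d + O(1)$. Since the first non-trivial Neumann eigenfunction of the ball arises from an $\ell = 1$ spherical harmonic (of the form $\psi(x) = x_1 h(|x|)$ with $h(r) = r^{-d/2} J_{d/2}(r\sqrt\mu)$), a brief computation using the identity $z J_\nu'(z) = \nu J_\nu(z) - z J_{\nu+1}(z)$ shows that $\sqrt{\mu_d}$ is the smallest positive solution of $J_{d/2}(z) = z\, J_{d/2+1}(z)$. Rewriting this via the Taylor series yields the scalar equation
\[
z^2 = (d+2)\,\frac{{}_0F_1(;d/2+1; -z^2/4)}{{}_0F_1(;d/2+2; -z^2/4)},
\]
whose right-hand side tends to $d+2$ as $d \to \infty$, giving $\mu_d = d + 2 + o(1)$. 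Second, I would prove the uniform hypergeometric asymptotic
\[
{}_0F_1(;d/2; -s) = e^{-2s/d}\bigl(1 + O(1/d)\bigr) \qquad (d \to \infty),
\]
valid uniformly for $s \in [0, Cd]$ for any fixed $C > 0$. This follows directly from the series $\sum_{k\ge 0} (-s)^k/(k!(d/2)_k)$ using the Pochhammer expansion $(d/2)_k = (d/2)^k\bigl(1 + O(k^2/d)\bigr)$ on the bulk $k \le K := O(\log d)$, combined with a Gaussian-type tail bound on $\sum_{k > K}$.

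Putting the pieces together yields $\eta_d(r) = e^{\mu_d(1-r^2)/(2d)}\bigl(1 + O(1/d)\bigr)$ uniformly in $r \in [0, 1]$, and the desired limit $\eta_d(r) \to e^{(1 - r^2)/2}$ follows from $\mu_d/d \to 1$. The main obstacle is propagating uniformity all the way through: obtaining a $d$-uniform error bound on the hypergeometric series for $s$ allowed to be as large as $Cd$ (rather than compact in $s$) is exactly what forces the bulk-tail splitting above, since naive term-by-term estimates deteriorate once $s$ becomes comparable to $d$.
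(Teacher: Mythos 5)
Your argument is correct, but it takes a genuinely different route from the paper. The paper's proof is Fourier-analytic/probabilistic: after normalizing, it identifies $\eta_d/\|\eta_d\|_\infty$ with $\widehat{\sigma_d}$, the Fourier transform of the uniform probability measure on the sphere of radius $\sqrt{\mu_{\Ball}}$ (both are radial Laplace eigenfunctions with the same eigenvalue and the same value at $0$), reduces to the one-dimensional marginal $\pi^1_*\sigma_d$, and invokes the classical fact that marginals of uniform measures on spheres of radius $\sqrt{d}+O(d^{-1/2})$ converge in $L^1$ to the standard Gaussian; the asymptotic $\mu_{\Ball}=d+O(1)$ is simply quoted. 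Your route is a direct, self-contained computation on the ${}_0F_1$ series: the cancellation of the prefactor $r^{1-d/2}$ is correct, the reduction of the Neumann condition to $J_{d/2}(z)=zJ_{d/2+1}(z)$ is correct, and the uniform estimate ${}_0F_1(;d/2;-s)=e^{-2s/d}(1+O(1/d))$ for $s\in[0,Cd]$ via the bulk--tail split at $K=O(\log d)$ is exactly the right technical device (the main term $e^{-2s/d}$ stays bounded below by $e^{-2C}$, so the additive $O(1/d)$ error upgrades to a multiplicative one, and uniformity in $r\in[0,1]$ follows since $r^2\mu_d/4\le \mu_d/4=O(d)$). What your approach buys is a quantitative rate $O(1/d)$ and an actual proof of $\mu_d=d+O(1)$ rather than a citation; what it costs is conceptual transparency. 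One small inaccuracy: from $z^2=(d+2)\cdot\bigl(1+O(1/d)\bigr)$ you may only conclude $\mu_d=d+O(1)$, not $\mu_d=d+2+o(1)$ (pinning down the constant would require the ratio of the two ${}_0F_1$'s to be $1+o(1/d)$, which it is not --- the $e^{-s/(a(a+1))}$ factor contributes a genuine $-1/d$ correction). Since the final step only uses $\mu_d/d\to 1$, this does not affect the validity of the lemma.
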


\begin{proof}
Let \(\tilde \eta_d( x) := \eta_d( x)/\|\eta_{d}\|_{\infty}\). Let \(\sigma_d\) be the uniform probability measure on the sphere of radius \(\sqrt{\mu_\Ball}\). Then \(\widehat{\sigma_d}\) is a radial eigenfunction of the Laplace operator, with the same eigenvalue as \(\tilde \eta_d( x)\), taking the same value at zero. Therefore, \(\tilde \eta_d( x)= \widehat{\sigma_d}\). Let \(\pi^{1}_* \sigma_d\) be the projection (push-forward) of \(\sigma_d\) onto the first coordinate. Then \(\tilde \eta_d(x) = \widehat{\pi^{1}_* \sigma_d}(|x|)\), and it suffices to show that \(\pi^{1}_* \sigma_d\to_{L^1} \frac{1}{{\sqrt{2\pi}}} e^{-x^2/2}\).

The eigenvalues of the ball have asymptotics \(\mu_\Ball = d+ O(1)\), and thus the measures \(\sigma_d\) are uniform on spheres of radius \(\sqrt{d}+O(d^{-1/2})\). The marginals of spheres of radius \(\sqrt{d}\) converge to standard normals, showing the convergence to a Gaussian.
\end{proof}

The fact that marginals of balls of radius \(\sqrt{d}\) converge to a Gaussian is a well-known fact in high-dimensional probability (see, e.g.~\cite{vershynin2018high}) but this can also be computed explicitly. The one-dimensional marginal of the ball is
\[
\frac{1}{Z_d} \1_{[-\sqrt{d},\sqrt{d}]}  (1- x^2/d)^{d/2} \sqrt{1 + \frac{x/d}{1-x^2/d}}
\]
which converges (up to the normalization constant) to \(e^{-x^2/2}\) as $d\to \infty$.

\Cref{cor:sqrte} follows from the fact that \(\max_{|x|<1}\eta_\infty(x)=\eta_\infty(0) = \sqrt{e}\).  \Cref{lem:gaussian_asymp} implies that, as \(d\to \infty\), \[\frac{|\{\psi^{(1)}_{\Omega}(x)>(1+\delta)\max_{y\in \partial\Omega}\psi^{(1)}_{\Omega}(y)\}|}{|\Omega|} \le (1- \delta +o(1))^{d} \lesssim e^{-d \delta(1-o(1))},\] establishing \Cref{cor:vol_exp}.

\bibliographystyle{alpha}
\bibliography{bibliography}

\begin{thebibliography}{HOHON97}

\bibitem[AB04]{atar2004neumann}
Rami Atar and Krzysztof Burdzy.
\newblock On {N}eumann eigenfunctions in lip domains.
\newblock {\em Journal of the American Mathematical Society}, 17(2):243--265,
  2004.

\bibitem[BB99]{banuelos1999hot}
Rodrigo Ba{\~n}uelos and Krzysztof Burdzy.
\newblock On the hot spots conjecture of {J}. {R}auch.
\newblock {\em Journal of Functional Analysis}, 164(1):1--33, 1999.

\bibitem[BNV94]{berestycki1994principal}
Henri Berestycki, Louis Nirenberg, and SR~Srinivasa Varadhan.
\newblock The principal eigenvalue and maximum principle for second-order
  elliptic operators in general domains.
\newblock {\em Communications on Pure and Applied Mathematics}, 47(1):47--92,
  1994.

\bibitem[BP12]{MR2899684}
Lorenzo Brasco and Aldo Pratelli.
\newblock Sharp stability of some spectral inequalities.
\newblock {\em Geom. Funct. Anal.}, 22(1):107--135, 2012.

\bibitem[Bur05]{Burdzy2005TheHS}
Krzysztof Burdzy.
\newblock The hot spots problem in planar domains with one hole.
\newblock {\em Duke Mathematical Journal}, 129:481--502, 2005.

\bibitem[BW99]{burdzy1999counterexample}
Krzysztof Burdzy and Wendelin Werner.
\newblock A counterexample to the ``{H}ot spots" conjecture.
\newblock {\em Annals of Mathematics}, pages 309--317, 1999.

\bibitem[CLW19]{chen2019monotone}
Hongbin Chen, Yi~Li, and Lihe Wang.
\newblock Monotone properties of the eigenfunction of {N}eumann problems.
\newblock {\em Journal de Math{\'e}matiques Pures et Appliqu{\'e}es},
  130:112--129, 2019.

\bibitem[Dam85]{damlamian1985probleme}
A~Damlamian.
\newblock Le probleme de la passoire de {N}eumann.
\newblock {\em Rend. Sem. Mat. Univ. Politec. Torino}, 43(3):427--450, 1985.

\bibitem[{Dio}24]{deDios2024convex}
Jaume~de {Dios Pont}.
\newblock Convex sets can have interior hot spots.
\newblock {\em arXiv preprint arXiv:2412.06344}, 2024.

\bibitem[DV87]{MR916715}
T.~Del~Vecchio.
\newblock The thick {N}eumann's sieve.
\newblock {\em Ann. Mat. Pura Appl. (4)}, 147:363--402, 1987.

\bibitem[Fou01]{MR1836248}
Soren Fournais.
\newblock The nodal surface of the second eigenfunction of the {L}aplacian in
  {${\bf R}^D$} can be closed.
\newblock {\em J. Differential Equations}, 173(1):145--159, 2001.

\bibitem[HOHON97]{hoffmann1997nodal}
M.~Hoffmann-Ostenhof, T.~Hoffmann-Ostenhof, and N.~Nadirashvili.
\newblock The nodal line of the second eigenfunction of the {L}aplacian in
  {${\bf R}^2$} can be closed.
\newblock {\em Duke Math. J.}, 90(3):631--640, 1997.

\bibitem[JM20]{judge2020euclidean}
Chris Judge and Sugata Mondal.
\newblock Euclidean triangles have no hot spots.
\newblock {\em Annals of Mathematics}, 191(1):167--211, 2020.

\bibitem[JM22]{judge2022erratum}
Chris Judge and Sugata Mondal.
\newblock Erratum: {E}uclidean triangles have no hot spots.
\newblock {\em Annals of Mathematics}, 195(1):337--362, 2022.

\bibitem[JN00]{jerison2000hot}
David Jerison and Nikolai Nadirashvili.
\newblock The “hot spots” conjecture for domains with two axes of symmetry.
\newblock {\em Journal of the American Mathematical Society}, 13(4):741--772,
  2000.

\bibitem[Kaw85]{kawohl1985rearrangements}
Bernhard Kawohl.
\newblock {\em Rearrangements and {C}onvexity of {L}evel {S}ets in {PDE}},
  volume 1150 of {\em Lecture Notes in Mathematics}.
\newblock Springer, 1985.

\bibitem[Ken13]{kennedy2013closed}
James~B. Kennedy.
\newblock Closed nodal surfaces for simply connected domains in higher
  dimensions.
\newblock {\em Indiana University Mathematics Journal}, pages 785--798, 2013.

\bibitem[Khr25]{khrabustovskyi2025neumann}
Andrii Khrabustovskyi.
\newblock The {N}eumann sieve problem revisited.
\newblock {\em Journal of Mathematical Analysis and Applications}, page 129933,
  2025.

\bibitem[Kle21]{kleefeld2021hot}
Andreas Kleefeld.
\newblock The hot spots conjecture can be false: {S}ome numerical examples.
\newblock {\em Advances in Computational Mathematics}, 47(6):85, 2021.

\bibitem[KR24]{KennedyRohleder2024}
James~B. Kennedy and Jonathan Rohleder.
\newblock On the hot spots conjecture in higher dimensions.
\newblock {\em arXiv preprint arXiv:2410.00816}, 2024.

\bibitem[KT19]{krejvcivrik2019location}
David Krej{\v{c}}i{\v{r}}{\'\i}k and Mat{\v{e}}j Tu{\v{s}}ek.
\newblock Location of hot spots in thin curved strips.
\newblock {\em Journal of Differential Equations}, 266(6):2953--2977, 2019.

\bibitem[MPW23]{mariano2023improved}
Phanuel Mariano, Hugo Panzo, and Jing Wang.
\newblock Improved upper bounds for the {H}ot {S}pots constant of {L}ipschitz
  domains.
\newblock {\em Potential Analysis}, 59(2):771--787, 2023.

\bibitem[MS66]{marchenko1966second}
Vladimir~Alexandrovich Marchenko and GV~Suzikov.
\newblock The second boundary value problem in regions with a complex boundary.
\newblock {\em Matematicheskii Sbornik}, 111(1):35--60, 1966.

\bibitem[Pas02]{pascu2002scaling}
Mihai Pascu.
\newblock Scaling coupling of reflecting {B}rownian motions and the hot spots
  problem.
\newblock {\em Transactions of the American Mathematical Society},
  354(11):4681--4702, 2002.

\bibitem[Rau75]{rauch1975five}
Jeffrey Rauch.
\newblock Lecture \#1. {F}ive problems: {A}n introduction to the qualitative
  theory of partial differential equations.
\newblock In {\em Partial differential equations and related topics: Ford
  Foundation sponsored program at Tulane University, January to May, 1974},
  volume 446 of {\em Lecture Notes in Mathematics}, pages 355--369. Springer,
  1975.

\bibitem[Ste23]{steinerberger2023upper}
Stefan Steinerberger.
\newblock An upper bound on the hot spots constant.
\newblock {\em Revista Mathematica Iberoamericana}, 39(4), 2023.

\bibitem[Ver18]{vershynin2018high}
Roman Vershynin.
\newblock {\em High-dimensional probability: {A}n introduction with
  applications in data science}, volume~47.
\newblock Cambridge university press, 2018.

\bibitem[Yan11]{yang2011hot}
Peng-fei Yang.
\newblock The hot spots conjecture on a class of domains in $\mathbb{R}^n$ with
  $n\ge 3$.
\newblock {\em Acta Mathematicae Applicatae Sinica, English Series},
  27:639--646, 2011.

\end{thebibliography}

\appendix

\newcommand{\niceauthor}[3]{
\noindent
\begin{minipage}{.6\textwidth}%
{#1}\\{\sc #2}\\
\emph{Email address:} \texttt{#3}
\end{minipage}\\[2em]
}
\vfill

\niceauthor
{Jaume de Dios Pont} 
{Department of Mathematics, ETH Zürich}
{jaume.dediospont@math.ethz.ch}
\niceauthor
{Alexander W. Hsu}
{Department of Applied Mathematics, University of Washington}
{owlx@uw.edu}
\niceauthor
{Mitchell A.~Taylor} 
{Department of Mathematics, ETH Zürich}
{mitchell.taylor@math.ethz.ch}

\end{document}